\documentclass[a4paper,fleqn]{cas-sc}
\usepackage{blindtext}
\usepackage[utf8]{inputenc}

\usepackage[numbers]{natbib}

\usepackage{graphicx}
\usepackage{mathtools}
\usepackage{amssymb}
\usepackage{amsfonts}
\usepackage{amsthm}
\usepackage{amsmath}
\usepackage{xcolor}
\usepackage{float}
\usepackage{caption}
\usepackage{subcaption}
\usepackage{comment}
\usepackage{enumitem}
\usepackage{mathrsfs}
\usepackage{placeins}

\usepackage{tikz-cd}

\usepackage{setspace}

\usepackage{hyperref}
\hypersetup{
	colorlinks=true,
	linkcolor=blue,
	filecolor=magenta,      
	urlcolor=cyan,
	pdfpagemode=FullScreen,
}

\usepackage{cases}
\usepackage{orcidlink}

\def\tsc#1{\csdef{#1}{\textsc{\lowercase{#1}}\xspace}}
\tsc{WGM}
\tsc{QE}

\newcounter{assumption}

\newenvironment{assumption}[1]{%
	\def\theassumption{#1}%
	\refstepcounter{assumption}%
	\begin{quote}%
		\textbf{(\theassumption) }\ignorespaces
	}{%
	\end{quote}%
}

\newcommand{\assref}[1]{\hyperref[#1]{(\ref*{#1})}}

\DeclareMathOperator{\Span}{Span}

\newcommand{\Bd}{\mathcal B_d}
\newcommand{\Hd}{\mathcal H_d}
\newcommand{\Int}{\operatorname{Int}}
\newcommand{\parv}{\operatorname{par}}
\newcommand{\ctr}{\operatorname{ctr}}
\newcommand{\lca}{\operatorname{lca}}
\newcommand{\MovU}[1]{\overline{\mathfrak U}^{\,#1}}
\newcommand{\MU}[1]{\mathfrak U^{\,#1}}
\newcommand{\ovU}[1]{\overline{U}^{\,#1}}
\newcommand{\R}{\mathbb R}

\newcommand{\Iint}{I_\sigma}

\newcommand{\cl}[1]{\mathcal{#1}}

\newtheorem{proposition}{Proposition}
\newtheorem{theorem}{Theorem}
\newtheorem{corollary}{Corollary}
\newtheorem{lemma}{Lemma}
\newdefinition{definition}{Definition}
\newdefinition{remark}{Remark}

\def\R{\mathbb{R}}

\def\P{\mathbb{P}}

\allowdisplaybreaks

\begin{document}
	\let\WriteBookmarks\relax
	\def\floatpagepagefraction{1}
	\def\textpagefraction{.001}
	
	\shorttitle{Polyhedral inference for hierarchical copulas}    
	
	\shortauthors{L. Marconi}  
	
	\title [mode = title]{A polyhedral framework for hierarchical Archimedean copulas: estimation, hierarchy recovery and singular asymptotics}  
	
	
	
	%
	
	\author[1]{Leonardo Marconi}[orcid=0009-0000-1597-7875]
	
	\cormark[1]
	
	
	\ead{leonardo.marconi5@unibo.it}
	
	
	\credit{Conceptualization, Methodology, Formal analysis,
		Investigation, Writing -- original draft, Writing -- review \& editing}
	
	\affiliation[1]{organization={Alma Mater Studiorum University of Bologna},
		addressline={Via delle Belle Arti 41}, 
		city={Bologna},
		postcode={40126}, 
		country={Italy}}
	
%
%
%
%
	
	
%
%
%
%
%
	
	\cortext[1]{Corresponding author}
	
	
	
	\begin{abstract}
		Hierarchical Archimedean copulas (HACs) admit redundant tree representations: when adjacent internal nodes have equal parameters, contracting the connecting edge leaves the copula unchanged. This creates a structural overparametrization. For homogeneous one-parameter families, we remove this redundancy and identify the resulting reduced model class with the ultrametric fan. We treat the Clayton, Frank, Ali-Mikhail-Haq, Gumbel and Joe families.
		
		Using the negative logit of normalized Kendall's tau as a common node-height coordinate, reduced hierarchies form polyhedral strata, binary hierarchies index maximal cones, and multifurcating hierarchies lie on common contraction faces. The induced cophenetic metric generates the same topology as uniform convergence of copulas. We also show that informative pairwise dependence summaries act componentwise on cophenetic coordinates and hence provide global coordinates for the reduced model.
		
		This representation yields a global estimator obtained by inverting estimated pairwise summaries and projecting onto the fan, without preselecting a hierarchy. We prove consistency and convergence-rate transfer. At binary truths, the unthresholded estimator recovers the hierarchy with probability tending to one; at multifurcating truths, it asymptotically selects a binary refinement, while vanishing thresholding consistently recovers the reduced hierarchy. Under a joint central limit theorem for the pairwise summaries, the projected estimator has a root-$n$ limit given by Euclidean projection of a Gaussian vector onto the tangent fan. The limit is Gaussian at binary hierarchies and generally non-Gaussian at multifurcating ones. For empirical Kendall's tau, we verify the required regularity conditions and obtain the corresponding singular limit.
\end{abstract}

	
	\begin{highlights}
		\item Reduced hierarchical Archimedean copulas form a finite polyhedral fan.
		\item Pairwise dependence summaries give global coordinates on the reduced model.
		\item Global projection estimation is consistent without preselecting a hierarchy.
		\item Multifurcating truths are over-resolved without vanishing thresholding.
		\item Tangent-fan projection yields generally non-Gaussian root-\(n\) limits.
	\end{highlights}

	\begin{keywords}
		Hierarchical Archimedean copulas \sep Ultrametric fan \sep Projection estimator \sep Hierarchy recovery \sep Singular asymptotics \sep Kendall's tau
	\end{keywords}
	
	\maketitle
	
	
	
	
	

	%
	%
	
	
	

	\section{Introduction} \label{sec:introduction}
	
	Archimedean copulas provide a parsimonious class of multivariate dependence models, but their exchangeability can be restrictive when different groups of variables exhibit different levels of dependence. Hierarchical, or nested, Archimedean copulas (HACs) address this limitation by recursively composing Archimedean copulas along a rooted tree. The internal vertices carry dependence parameters, while the tree records how the variables are grouped. This construction retains much of the analytical tractability of ordinary Archimedean copulas while permitting non-exchangeable dependence structures; see, among others, \citep{HofertMaechler2011,Joe1997,McNeil2008}.
	
	Statistical inference for these models involves both continuous and discrete unknowns: one must estimate the parameters attached to the internal vertices and recover the underlying hierarchy. Several methods have been proposed for this purpose. In the homogeneous case, pairwise margins and rank-based dependence
	measures, particularly Kendall's tau, lead naturally to reconstruction
	procedures \citep{SegersUyttendaele2014}. Hierarchical aggregation
	methods for structure and parameter estimation were developed, among
	others, by \citet{OkhrinOkhrinSchmid2013}. Other approaches fit an initially binary hierarchy and subsequently collapse selected forks, use penalization or goodness-of-fit comparisons, or estimate the hierarchy, the generator families and their parameters simultaneously \citep{GoreckiHofertHolena2016,GoreckiHofertHolena2017,MatsypuraNeoProkhorov2016,Uyttendaele2014,Uyttendaele2018}. In particular, \citet{OkhrinRistig2024} propose a penalized likelihood procedure that simultaneously estimates parameters and aggregates the HAC structure by shrinking differences between adjacent parameters to zero. Likelihood-based inference for Archimedean and hierarchical constructions provides another important perspective \citep{HofertMaechlerMcNeil2012,HofertPham2013}.
	
	A fundamental difficulty is that an arbitrary decorated hierarchy is not an identifiable parametrization of the corresponding copula. If an internal vertex and one of its internal children carry the same Archimedean parameter, associativity implies that contracting the edge between them leaves the represented copula unchanged. A binary hierarchy with equal parameters along some edges therefore represents the same probability model as a lower-dimensional multifurcating hierarchy. A probability model with multifurcations may lie simultaneously in the closures of several higher-dimensional tree parametrizations. Ordinary smooth-model asymptotics therefore need not describe estimation when the hierarchy itself is unknown.
	
	The work most directly related to this singular inferential problem is \citet{PerreaultTangPanReid2025}. They study potentially overparametrized hierarchical Archimedean copulas and derive stochastic representations for constrained maximum likelihood estimators, leading in particular to likelihood-ratio tests for structural hypotheses associated with collapsed nodes. Their analysis starts from a prescribed overparametrized hierarchy and studies the local effect of its parameter constraints. The question considered here is complementary. We first identify all decorated hierarchies that represent the same copula and work directly on the resulting space of reduced models. The hierarchy is then allowed to vary over all refinements of the true reduced hierarchy. Consequently, at a multifurcating copula the local feasible set is generally a non-convex union of refinement cones rather than the constraint cone associated with one prescribed tree. This distinction is essential for describing not only continuous parameter fluctuations but also the first-order hierarchy selected by the estimator.
	
	This viewpoint suggests that reduced hierarchical copulas should not be treated as a disjoint collection of tree-specific parameter spaces. Binary models possess boundary faces on which adjacent parameters become equal, and these faces represent exactly the multifurcating models obtained by contracting the corresponding edges. Tree topology and continuous node parameters therefore form a single stratified model space. The central purpose of this paper is to identify this space explicitly and to exploit its structural geometry for statistical inference.
	
	The relevant geometric object is familiar from phylogenetics, combinatorics and tropical geometry. A rooted equidistant tree can be encoded by its cophenetic vector, whose coordinates record the heights of the lowest common ancestors (LCAs) of all pairs of leaves. Such vectors satisfy the ultrametric three-point condition and form the Bergman fan of the complete graph. Its maximal cones correspond to binary hierarchies, while their lower-dimensional faces correspond to non-binary hierarchies obtained by contracting zero-length internal edges \citep{Ardila2005,ArdilaKlivans2006}. Cophenetic-vector metrics for rooted trees have been studied in phylogenetics \citep{CardonaMirRosselloRotgerSanchez2013}, while Euclidean least-squares projection onto the space of equidistant trees has been investigated as a tree-reconstruction problem \citep{FaheyHostenKriegerTimpe2008}.
	
	In the present work, we realize one-parameter hierarchical Archimedean
	copulas as the Bergman fan. Admissible families include Clayton, Frank, Ali-Mikhail-Haq (AMH), Gumbel and Joe. We use the negative logit of normalized Kendall's
	tau as a common node-height coordinate.
	In these coordinates every reduced hierarchy
	determines the relative interior of a polyhedral cone, while setting selected
	parent--child increments equal to zero produces exactly the faces associated
	with edge contractions. Through the resulting bijection, the polyhedral
	stratification of the ultrametric fan is transported to the copula class:
	fixed reduced hierarchies form its strata, and multifurcating models are the
	common contraction faces of their refinements. We likewise transport the
	ambient Euclidean metric to obtain a cophenetic metric on the model space and
	show that it induces exactly the topology inherited from uniform convergence
	of copula functions. Thus the reduced copula class acquires a
	family-independent hierarchy-stratified metric geometry. On this
	common stratified space, statistical geometries such as the Fisher--Rao
	geometry may assign family-dependent lengths to the same contraction and
	refinement directions.
	
	The same representation clarifies the role of pairwise dependence
	information. It is known that the hierarchy of a hierarchical Archimedean
	copula can be recovered from its bivariate margins, and pairwise
	rank-based information has accordingly been used for structure and parameter
	estimation \citep{GoreckiHofertHolena2016,OkhrinOkhrinSchmidProperties2013,SegersUyttendaele2014}. In cophenetic coordinates, every bivariate copula
	functional acts componentwise through a scalar profile.
	Whenever this profile is injective, the full matrix of pairwise summaries therefore
	provides global coordinates on the reduced model space, recovering
	simultaneously the copula's hierarchy and parameters. We verify the required
	identifiability and regularity properties for Kendall's tau and Spearman's rho
	in the admissible families, and
	for the lower- or upper-tail dependence coefficients whenever they are informative.
	
	These coordinates lead naturally to a global estimator on the complete
	reduced model space. Starting from estimated pairwise summaries, we invert the
	scalar profile coordinatewise to obtain an unconstrained cophenetic vector and project it onto the Bergman fan. This projection is then bijectively mapped to the stratified HAC model
	space. The resulting estimator does not require a binary hierarchy to be
	specified in advance. We establish its consistency and transfer convergence
	rates from the initial pairwise estimator.
	
	A central consequence of the stratification is a sharp distinction between
	estimation of the copula and recovery of its reduced hierarchy. Binary
	hierarchies lie in the relative interiors of maximal cones and are locally
	stable, so consistent estimation recovers their hierarchy. Multifurcating hierarchies instead lie on common faces of
	several refinement cones. As a consequence,  without
	thresholding, the estimated hierarchy is a binary
	refinement of the true reduced hierarchy with probability tending to one.
	Therefore, exact unthresholded hierarchy recovery has a sharp dichotomy:
	its probability tends to one at a binary truth and to zero at a
	multifurcating truth. We show that the reduced hierarchy can nevertheless be
	consistently recovered by contracting sufficiently small estimated increments,
	and formulate the corresponding vanishing-threshold recovery theorem for
	general consistent estimators.
	
	We explain this phenomenon through a singular asymptotic theory for the
	projected estimator. Under a joint central limit theorem for the initial
	pairwise summaries, coordinatewise inversion produces a Gaussian perturbation
	in the ambient cophenetic space. At a multifurcating true copula every
	refinement of the reduced hierarchy is locally feasible, and we prove that
	the exact first-order constraint is a generally non-convex tangent fan.
	The root-\(n\) limit of the projected estimator is the nearest-point
	projection of the Gaussian perturbation onto this fan. At a binary hierarchy
	the tangent fan reduces to a linear space and the limit is Gaussian; at a
	multifurcating hierarchy it is generally non-Gaussian. Moreover, the limiting
	projection avoids the non-maximal skeleton almost surely, which is the local
	geometric mechanism behind the asymptotic selection of a binary refinement.
	The additional refinement increments are therefore of order \(n^{-1/2}\).
	Accordingly, any threshold satisfying
	\[
	\varepsilon_n\downarrow0,
	\qquad
	\sqrt n\,\varepsilon_n\to\infty
	\]
	removes these sampling-induced refinement edges while retaining the genuine
	positive increments, and hence consistently recovers the true reduced
	hierarchy.
	
	The Gaussian projection limit reflects the general tangent-cone
	principle for constrained estimation and singular likelihood
	asymptotics \citep{Drton2009,Geyer1994,Shapiro2000}.
	We establish it directly from the exact local polyhedral
	geometry of the model, revealing how Gaussian perturbations
	select among refinements of the true reduced hierarchy. Rather than a single constraint cone attached to
	a prescribed parametrization, the local feasible set consists of all
	refinement cones meeting at the true reduced model. The limiting projection
	therefore describes simultaneously the root-\(n\) fluctuations of the genuine
	continuous parameters and the random first-order resolution of structural
	uncertainty among competing refinements. This geometry is also related to
	nonclassical central limit phenomena on stratified and phylogenetic tree
	spaces \citep{BardenLeOwen2013,HotzEtAl2013}.
	
	The remainder of the paper is organized as follows. Section~\ref{sec:ultrametric-hierarchy-space} constructs the ultrametric fan and establishes its local tangent geometry. Section~\ref{sec:archimedean-realization} realizes the fan as the reduced homogeneous hierarchical Archimedean copula class. Section~\ref{sec:copula-geometry} studies the cophenetic metric and pairwise dependence coordinates. Section~\ref{sec:statistical-recovery} introduces the projection estimator and establishes consistency and hierarchy recovery. Section~\ref{sec:singular-clt} derives the tangent-fan asymptotic distribution. Section \ref{sec:kendall} verifies the assumptions and specializes all results to the empirical Kendall matrix.

	\section{The ultrametric hierarchy space}
	\label{sec:ultrametric-hierarchy-space}
	
	The geometry developed in this section is largely classical. Ultrametric vectors, their polyhedral fan structure, and their relation with rooted equidistant trees are well known in phylogenetics, combinatorics and tropical geometry; see, for instance, \citep{Ardila2005,ArdilaKlivans2006}.
	We recall here the features of this geometry that will be needed below, in a notation adapted to hierarchical copulas, and derive the specific local statements required for the statistical analysis. Accordingly, proofs of standard geometric facts are kept concise, while details are given when they are needed to establish a formulation used later in the paper.

	Fix \(d\ge2\) and write \([d]:=\{1,\ldots,d\}\). All trees below are
	rooted, non-planar, and have leaves bijectively labelled by \([d]\).
	Non-planar means that the ordering of the children of a vertex is not part of
	the structure.
	
	Let \(\Bd\) denote the finite set of full binary hierarchies on \([d]\),
	i.e. rooted leaf-labelled trees in which every internal vertex has exactly
	two children. Let \(\Hd\) denote the finite set of all rooted
	leaf-labelled hierarchies on \([d]\), where every internal vertex has at
	least two children. Thus \(\Bd\subset\Hd\). For \(\sigma\in\Hd\), write
	\(\Int(\sigma)\) for the set of internal vertices, including the root
	\(\rho\), and \(\Iint:=\Int(\sigma)\setminus \{\rho\}\) for the set of internal vertices minus the root. For every non-root vertex \(v\), including leaves, write
	\(\parv_\sigma(v)\) for its parent. When the hierarchy is fixed,
	we suppress the subscript and write simply \(\parv(v)\). Set \(K(\sigma):=|\Int(\sigma)|\). If \(\sigma\in\Bd\), then
	\(K(\sigma)=d-1\).
	
	For \(\sigma\in\Hd\), define the open and closed coordinate domains
	\[
	U^\sigma:=\mathbb R\times(0,\infty)^{K(\sigma)-1},
	\qquad
	\ovU{\sigma}:=\mathbb R\times[0,\infty)^{K(\sigma)-1}.
	\]
	We write \(x=(r,\eta)\), where \(r\) is the root height and
	\(\eta=(\eta_v)_{v\in\Iint}\) is the vector of
	parent-child increments. The definition of $U^\sigma$ excludes zero internal increments:
	if $(r,\eta)\in U^\sigma$, then every non-root internal vertex has
	strictly smaller height than its parent. In that case, we refer to the pair \((\sigma,x)\)
	as \emph{reduced}. Accordingly, when we refer to a hierarchy as reduced it is understood that all corresponding increments are positive. By contrast, $\overline U^{\,\sigma}$ allows zero
	increments, corresponding to internal edges whose endpoints have the
	same height. The internal heights are defined recursively by
	\begin{equation}
		\label{eq:h-to-r-delta-on-quotient}
		h_\rho(x):=r,
		\qquad
		h_v(x):=h_{\parv(v)}(x)-\eta_v,
		\qquad v\in\Iint.
	\end{equation}
	Thus the heights strictly decrease from parent to child on \(U^\sigma\),
	whereas \(\ovU{\sigma}\) also allows equal heights along some
	internal edges.
	
	Write \(N:=\binom{d}{2}\). We identify \(\R^N\) with the space
	\(\R^{\binom{[d]}2}\) of real vectors indexed by the unordered pairs of
	distinct leaves. We write such a vector as
	 \(
	 H=(H_{ij})_{1\le i<j\le d},
	 \)
	and, whenever both orders of the indices occur, adopt the symmetric
	convention \(H_{ji}:=H_{ij}\).
	
	For an internal vertex \(v \in \Int(\sigma)\), let \(A_v\subset[d]\) be the cluster of
	leaves below \(v\). In particular, \(A_\rho=[d]\).
	The internal clusters form a laminar family \(\mathscr A(\sigma)\): for any \(v,w\in\Int(\sigma)\), the sets \(A_v\) and \(A_w\) are either disjoint or one contains the other. Conversely, a laminar family of non-singleton subsets of \([d]\) containing \([d]\) determines a unique hierarchy in \(\Hd\), \cite[Theorem 3.5.2]{SempleSteel2003}. We identify \(v\) with \(A_v \in \mathscr A(\sigma)\). For \(A\in\mathscr A(\sigma)\setminus\{[d]\}\), we write
	\(
	\parv_\sigma(A):=A_{\parv_\sigma(v)}.
	\)
	Equivalently, \(\parv_\sigma(A)\) is the unique minimal internal cluster
	of \(\sigma\) that strictly contains \(A\). As above, the subscript is
	omitted when the hierarchy is clear.
	
	For every
	\(A\subset[d]\) with \(|A|\ge2\), define
	\(b_A\in\mathbb R^N\) by
	\(
	(b_A)_{ij}:=\mathbf 1_{\{\{i,j\}\subset A\}},
	\text{ for } 1\le i<j\le d,
	\)
	and let \(\mathbf 1\in\mathbb R^N\) denote the vector whose
	coordinates are all equal to one. Define the linear map
	\begin{equation}
		\label{eq:linear-cophenetic-map}
		L_\sigma: \R^{K(\sigma)}\to \R^N, \quad L_\sigma(r,\eta)
		:=r\mathbf 1-
		\sum_{v\in\Iint}
		\eta_v b_{A_v}.
	\end{equation}
	For \(1\le i<j\le d\), let \(\lca_\sigma(i,j)\) be the LCA of the leaves \(i\) and \(j\) in \(\sigma\), that is the node $v$ such that $A_v$ is the minimal cluster with \(\{i,j\}\subset A_v.\)
	The coordinates of \(L_\sigma(r,\eta)\) are the LCA heights. Indeed, for every \(1\le i<j\le d\),
	\begin{equation}
		\label{eq:LCA-coordinate}
		L_\sigma(r,\eta)_{ij}
		=
		r-\sum_{\substack{v\in I_\sigma \\
				\{i,j\}\subseteq A_v}}\eta_v
		=
		h_{\lca_\sigma(i,j)}(r,\eta).
	\end{equation}
	where \(v \prec w\) means that \(v\) lies in the path between the root and \(w\).

	We set
	\begin{equation}
		\label{eq:hierarchy-cones}
		\mathfrak U^\sigma
		:=L_\sigma(U^\sigma),
		\qquad
		\MovU{\sigma}
		:=L_\sigma(\ovU{\sigma}), \qquad V_\sigma
		:=\Span\bigl(\MovU{\sigma}\bigr).
	\end{equation}

We use the following standard terminology for convex cones. For a convex set
\(C\subset\mathbb R^N\), we denote by
\(
\operatorname{relint}(C)
\)
its relative interior, that is, its interior with respect to the topology of
its affine hull. For a convex cone \(C\), its
lineality space is
\(
\operatorname{lin}(C):=C\cap(-C),
\)
the largest linear subspace contained in \(C\). A pointed polyhedral cone is
called simplicial if it is generated by linearly independent extreme rays.

	\begin{proposition}
		\label{prop:linear-hierarchy-cell}
		For every \(\sigma\in\Hd\), the map \(L_\sigma: \R^{K(\sigma)}\to \R^N\) is injective and
		\begin{equation}
			\label{eq:cone-representation}
		\MovU{\sigma}
		=\mathbb R\mathbf 1+
		\operatorname{cone}\{-b_{A_v}:v\in\Iint\}=\mathbb R\mathbf 1+
		\biggl\{-\sum_{ v\in\Iint}\alpha_v b_{A_v}: \alpha_v \ge 0\biggr\}.
		\end{equation}
		Consequently, \(\MovU{\sigma}\) is a closed convex
		polyhedral cone of dimension \(K(\sigma)\), with lineality space
		\(\mathbb R\mathbf 1\) and relative interior \(\mathfrak U^\sigma\). Moreover,
		\(
		 V_\sigma
		=
		L_\sigma(\R^{K(\sigma)}).
		\)
	\end{proposition}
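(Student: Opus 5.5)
The plan is to work almost entirely from the explicit formula \eqref{eq:linear-cophenetic-map} and to reduce every claim to the linear independence of the family \(\{\mathbf 1\}\cup\{b_{A_v}:v\in\Iint\}\). Note first that \(\mathbf 1=b_{[d]}=b_{A_\rho}\). So injectivity of \(L_\sigma\) is equivalent to linear independence of the vectors \(b_A\) with \(A\in\mathscr A(\sigma)\).

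To prove this independence, I will use a triangular (leaf-to-root) argument exploiting laminarity. For each cluster \(A\in\mathscr A(\sigma)\) I choose a witness pair \(\{i,j\}\) with \(\lca_\sigma(i,j)=A\). Such a pair exists because \(A\) has at least two children in \(\sigma\); take \(i,j\) in distinct children, and then by \eqref{eq:LCA-coordinate} the minimal cluster containing \(\{i,j\}\) is \(A\). Then \((b_B)_{ij}=1\) if and only if \(B\supseteq A\). Suppose \(\sum_A c_A b_A=0\). Evaluating at the witness of a minimal cluster forces its coefficient to vanish. Inducting upward along the inclusion order, at the witness of \(A\) only \(c_B\) with \(B\supseteq A\) contribute. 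Those with \(B\supsetneq A\) are not yet known to vanish, so I instead induct top-down from the leaves' side in the reverse way: order clusters by decreasing size. The witness of \([d]\) lies only in \(b_{[d]}\) when the two leaves sit in different root children, so \(c_{[d]}=0\). Then for \(A\), every \(B\supsetneq A\) already has \(c_B=0\), so \(c_A=0\). This settles injectivity, and hence \(\dim L_\sigma(\R^{K(\sigma)})=K(\sigma)\).

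The cone representation \eqref{eq:cone-representation} is immediate. Since \(\ovU{\sigma}=\R\times[0,\infty)^{K(\sigma)-1}\) and \(L_\sigma\) is linear, its image is \(\R\mathbf 1+\operatorname{cone}\{-b_{A_v}\}\). A finitely generated cone is a closed convex polyhedral cone. Its span is the span of the generators, which is \(L_\sigma(\R^{K(\sigma)})=V_\sigma\), so its dimension is \(K(\sigma)\).

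For the lineality space, suppose \(x\) and \(-x\) both lie in the cone. Then \(L_\sigma^{-1}(x)\) and \(-L_\sigma^{-1}(x)\) lie in \(\ovU{\sigma}\) by injectivity, which forces \(\eta=0\). Hence \(\operatorname{lin}=\R\mathbf 1\). For the relative interior, \(L_\sigma\) is a linear isomorphism from \(\R^{K(\sigma)}\) onto \(V_\sigma\), which is the affine hull of the cone since the cone contains \(0\) and spans \(V_\sigma\). A linear homeomorphism maps the interior of \(\ovU{\sigma}\) in \(\R^{K(\sigma)}\), namely \(U^\sigma\), onto the relative interior of the image. The only step requiring care is the witness-pair/triangularity argument, which depends on every internal vertex having at least two children; everything else is formal linear algebra.
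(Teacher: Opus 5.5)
Your proof is correct and follows essentially the paper's route. Both arguments use a witness pair of leaves in distinct children of each internal vertex to get a triangular system: the paper inverts it directly via \(r=h_\rho\) and \(\eta_v=h_{\parv(v)}-h_v\), while you show the kernel is trivial by top-down induction. Both then transport closedness, dimension, lineality space and relative interior through the linear isomorphism \(L_\sigma\).

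Delete the abandoned bottom-up sentence, though. At the witness of a minimal cluster \(A\), every \(c_B\) with \(B\supseteq A\) contributes, so that evaluation alone does not force \(c_A=0\).
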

	
	\begin{proof}
		To prove injectivity, for every
		internal vertex \(v\), choose two leaves \(i_v,j_v\) lying in two distinct
		child subtrees of \(v\). Then \(\lca_\sigma(i_v,j_v)=v\), so the coordinate
		\(L_\sigma(r,\eta)_{i_vj_v}\) recovers \(h_v\). In particular,
		\(r=h_\rho\), and every non-root increment is recovered through
		\(\eta_v=h_{\parv(v)}-h_v\). Hence \(L_\sigma\) is injective.
		
		Therefore \(L_\sigma\) is a linear isomorphism onto its image and, consequently, it carries
		\(\ovU{\sigma}\) onto a closed polyhedral cone
		of dimension \(K(\sigma)\), and it carries
		\(U^\sigma\) onto its relative interior. Equation \eqref{eq:cone-representation} follows directly from
		\eqref{eq:linear-cophenetic-map}.

		The lineality space of \(\overline U^\sigma\) is
		\(
		\mathbb R\times\{0\}.
		\)
		Because \(L_\sigma\) is injective, one has
		\(
		\operatorname{lin}(\overline{\mathfrak U}_\sigma)
		=
		L_\sigma(\mathbb R\times\{0\})
		=
		\mathbb R\mathbf 1.
		\)		
		Finally, \eqref{eq:linear-cophenetic-map} and \eqref{eq:cone-representation} give
		\(
		 V_\sigma
		=
		\Span(
		\mathbf 1,\{b_{A_v}:v\in I_\sigma\})
		=
		L_\sigma(\R^{K(\sigma)}).
		\)

%
	\end{proof}
	
	For \(\gamma,\sigma\in\Hd\), write \(\gamma\preceq\sigma\) if
	\(\gamma\) is obtained from \(\sigma\) by contracting a possibly empty
	collection of non-root internal edges, and write
	\(\gamma\prec\sigma\) if \(\gamma\preceq\sigma\) and
	\(\gamma\neq\sigma\).
	
	If \(B\subseteq\Iint\), define
	\(\sigma/B\) to be the hierarchy whose internal cluster family is
	\[
	\mathscr A(\sigma/B)
	=
	\mathscr A(\sigma)
	\setminus
	\{A_v:v\in B\}=\{A_v : v \in \Int(\sigma)\setminus B\}.
	\]
	Equivalently, \(\sigma/B\) is obtained by contracting the edges joining
	the vertices in \(B\) to their parents.
	
	For \(\sigma,\tau\in\Hd\), denote by \(\sigma\wedge\tau\) the	hierarchy whose internal cluster family is
	\(
	\mathscr A(\sigma\wedge\tau)
	:=
	\mathscr A(\sigma)\cap\mathscr A(\tau).
	\)
	Since the intersection of two laminar families is again laminar and
	contains \([d]\), this defines an element of \(\Hd\).
	
	We now define the hierarchy space directly in cophenetic coordinates. Let
	\begin{equation}
		\label{eq:ultrametric-fan-support}
		\mathfrak U_d
		:=\left\{
		H\in\mathbb R^{N}:
		\max\{H_{ij},H_{ik},H_{jk}\}
		\text{ is attained at least twice for all distinct }i,j,k
		\right\}.
	\end{equation}
	The entries of \(H\) are not required to be non-negative. The translation
	\(H\mapsto H+c\mathbf 1\) changes the root height while leaving all
	internal increments unchanged.
	
	\begin{theorem}
		\label{thm:ultrametric-polyhedral-characterization}
		Every \(H\in\mathfrak U_d\) determines a unique hierarchy
		\(\sigma\in\mathcal H_d\) and a unique coordinate vector
		\((r,\eta)\in U^\sigma\) such that
		\(H=L_\sigma(r,\eta)\).
		Consequently,
		\begin{equation}
			\label{eq:ultrametric-stratification}
			\mathfrak U_d
			=\bigsqcup_{\sigma\in\Hd}\mathfrak U^\sigma.
		\end{equation}
		Moreover, the following statements hold.
		\begin{enumerate}[label=\textup{(\roman*)}]
			\item \(
			\MovU{\sigma}
			=\bigsqcup_{\gamma\preceq\sigma}\mathfrak U^\gamma,
			\) for every \(\sigma\in\Hd\).

			\item Set \(\sigma \in \cl H_d.\) If \(B\subset\Iint\), then the face of \(\MovU\sigma\) on
			which \(\eta_v=0\) for \(v\in B\) is
			\(
			\MovU{\sigma/B}.
			\)
			Moreover, every face of
			\(\MovU{\sigma}\) is obtained in this way.
			
			\item 
			\(
			\MovU{\sigma}
			\cap
			\overline{\mathfrak U}^{\,\tau}
			=
			\overline{\mathfrak U}^{\,\sigma\wedge\tau},
			\) for \(\sigma,\tau\in\Hd\).
			
			\item The collection
			\(
			\mathcal F_d
			:=\{\MovU{\sigma}:\sigma\in\Hd\}
			\)
			is a finite polyhedral fan with common lineality space
			\(\mathbb R\mathbf 1\) and support \(\mathfrak U_d\). Its maximal cones
			are precisely those indexed by binary hierarchies and have dimension
			\(d-1\). In particular, \(\mathfrak U_d\) is closed in
			\(\mathbb R^{N}\).
		\end{enumerate}
	\end{theorem}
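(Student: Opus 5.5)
The plan is to build the hierarchy directly from \(H\) through its threshold sets, and then read off (i)--(iv) from Proposition~\ref{prop:linear-hierarchy-cell} together with this uniqueness.

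\emph{Existence.} Fix \(H\in\mathfrak U_d\). For a real \(t\), define the relation \(i\sim_t j\) by \(H_{ij}\le t\), with \(i\sim_t i\) by convention. The three-point condition makes \(\sim_t\) transitive. Indeed, if \(H_{ij},H_{jk}\le t\) but \(H_{ik}>t\), then the maximum of \(\{H_{ij},H_{jk},H_{ik}\}\) is attained only at \(H_{ik}\), which is forbidden. Let \(\mathscr A\) be the collection of all non-singleton classes of \(\sim_t\) as \(t\) ranges over the finitely many values \(H_{ij}\), together with \([d]\). Classes at different thresholds are nested, so \(\mathscr A\) is laminar. By \cite[Theorem 3.5.2]{SempleSteel2003} it determines a unique \(\sigma\in\Hd\).

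For each \(A\in\mathscr A\), set \(h_A:=\max_{i,j\in A}H_{ij}\). Then \(r:=h_{[d]}\) and \(\eta_A:=h_{\parv(A)}-h_A\). Here \(\eta_A>0\), because \(A\) is a class at level \(h_A\) that is strictly contained in its parent, and the parent is reached only at a strictly larger threshold. I then check that \(H_{ij}=h_{\lca(i,j)}\). The pair \(i,j\) lies in the class at level \(H_{ij}\), and that class is exactly \(\lca(i,j)\), since it is the minimal cluster containing the pair. By \eqref{eq:LCA-coordinate} this gives \(H=L_\sigma(r,\eta)\).

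\emph{Uniqueness.} Suppose \(H=L_\tau(r',\eta')\) with \((r',\eta')\in U^\tau\). Because heights strictly decrease down \(\tau\), the classes of \(\sim_t\) are exactly the maximal clusters of \(\tau\) whose height is at most \(t\). Hence \(\mathscr A(\tau)=\mathscr A\), and injectivity of \(L_\tau\) from Proposition~\ref{prop:linear-hierarchy-cell} fixes the coordinates. Conversely, every \(L_\sigma(r,\eta)\) with \(\eta\ge0\) satisfies the three-point condition: for leaves \(i,j,k\), two of the three pairs share the highest LCA. This gives \eqref{eq:ultrametric-stratification}.

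\emph{Items (i)--(iv).} For (i), take \(x\in\ovU{\sigma}\) and let \(B\) be the set of its zero increments. Then \(L_\sigma(x)=L_{\sigma/B}(x')\) with \(x'\) strictly positive, so \(L_\sigma(x)\in\mathfrak U^{\sigma/B}\). Conversely, each \(\gamma\preceq\sigma\) arises as some \(\sigma/B\); disjointness of the union then follows from uniqueness.

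For (ii), the coordinate faces of the orthant \(\ovU{\sigma}\) are exactly the sets \(\{\eta_B=0\}\), and \(L_\sigma\) is a linear isomorphism onto its image, so faces correspond to faces. The image of \(\{\eta_B=0\}\) is \(\MovU{\sigma/B}\) by (i).

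For (iii), a point \(H\) lies in both cones iff its unique stratum \(\gamma\) satisfies \(\gamma\preceq\sigma\) and \(\gamma\preceq\tau\). By the laminar-family description, \(\gamma\preceq\sigma\) means \(\mathscr A(\gamma)\subseteq\mathscr A(\sigma)\), so the condition is \(\mathscr A(\gamma)\subseteq\mathscr A(\sigma)\cap\mathscr A(\tau)\), i.e.\ \(\gamma\preceq\sigma\wedge\tau\). Now apply (i).

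For (iv), (ii) and (iii) give the fan axioms, and the support is \(\mathfrak U_d\) by \eqref{eq:ultrametric-stratification}. Each hierarchy refines to a binary one, so the maximal cones are the binary ones, which have dimension \(d-1\) since \(K(\sigma)=d-1\). Closedness holds because \(\mathfrak U_d\) is a finite union of closed cones.

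The main obstacle is the exact identification of \(\tau\) with the threshold hierarchy in the uniqueness step. There I would argue directly with the strict monotonicity of heights: every cluster \(A_v\) of \(\tau\) is an equivalence class at level \(h_v\). The reason is that any leaf outside \(A_v\) meets \(A_v\) at an ancestor of \(v\), and that ancestor has strictly larger height.
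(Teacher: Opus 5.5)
Your proposal is correct and follows the paper's own argument. You build $\sigma(H)$ and its coordinates from the threshold relations $\sim_t$, and you read off (i)--(iv) from Proposition~\ref{prop:linear-hierarchy-cell} in the same way; deriving (i) before (ii) instead of the reverse is immaterial. Your explicit uniqueness step, which shows that any $\tau$ with $H\in\mathfrak U^\tau$ has exactly the threshold classes as clusters, is more complete than the paper's proof: the paper only notes that the construction depends on $H$ alone, so it leaves the disjointness in \eqref{eq:ultrametric-stratification} implicit.
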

	
	\begin{proof}
		Let \(H\in\mathfrak U_d\). For \(t\in\mathbb R\), define
		\[
		i\sim_t j
		\quad\Longleftrightarrow\quad
		i=j\ \text{ or }\ H_{ij}\le t.
		\]
		The relation is transitive: if \(H_{ij}\le t\) and \(H_{jk}\le t\)
		but \(H_{ik}>t\), then \(H_{ik}\) would be the unique maximum of
		\(\{H_{ij},H_{ik},H_{jk}\}\), contradicting
		\eqref{eq:ultrametric-fan-support}. Reflexivity holds by definition, while symmetry follows from the convention \(H_{ij}=H_{ji}\). Hence \(\sim_t\) is an equivalence
		relation, and its partitions are nested as \(t\) increases, meaning that, for $s \le t$, every equivalence class at level $s$ is contained in an equivalence class at level $t$.
		
		Let \(\mathscr A(H)\) be the collection of all equivalence classes of
		\(\sim_t\), as \(t\) varies, having cardinality at least two. Since its partitions are nested, this is a
		finite laminar family containing \([d]\), and therefore determines a
		unique rooted non-planar hierarchy \(\sigma(H)\), \cite[Theorem 3.5.2]{SempleSteel2003}. Every
		\(A\in\mathscr A(H)\) is uniquely of the form \(A=A_v\) for some
		\(v\in\Int(\sigma(H))\). For \(A\in\mathscr A(H)\), set
		\(
		h_A:=\max\{H_{ij}:i,j\in A,\ i<j\}.
		\)
		
		If \(A\subsetneq B\) are two such clusters, then \(h_A<h_B\). Indeed,
		choose \(a\in A\) and \(b\in B\setminus A\). Since \(A\) is already an
		equivalence class at level \(h_A\), one has \(H_{ab}>h_A\), while
		\(h_B\ge H_{ab}\).
		
		For \(i < j\), denote by \(A_{ij}\) the equivalence class at threshold level \(t=H_{ij}\). We have \(h_{A_{ij}}=H_{ij}\). Moreover, \(A_{ij}\) is the smallest class containing both \(i\) and \(j\). Indeed, if \(\{i,j\}\subseteq B\) then \(h_B \ge H_{ij}\) and thus \(B \supset A_{ij}\). Therefore, \(A_{ij}\) is identical to the cluster corresponding to their common ancestor, that is \(A_{ij}=A_{\lca_\sigma(i,j)}\). It follows that
		\(
		H_{ij}=h_{\lca_{\sigma(H)}(i,j)}.
		\)
		
		Set \(r:=h_{[d]}\) and
		\(\eta_A:=h_{\parv(A)}-h_A>0\) for every non-root internal cluster.
		Then \(H=L_{\sigma(H)}(r,\eta)\). The relations \(\sim_t\), and hence \(\mathscr{A}(H)\) and the resulting hierarchy \(\sigma(H)\), are uniquely determined by \(H\). Once \(\sigma(H)\) is known, Proposition~\ref{prop:linear-hierarchy-cell}
		recovers the coordinates uniquely. This gives
		\[\mathfrak U_d \subset \bigcup_{\sigma \in \cl H_d}\mathfrak U^\sigma.\]
		
		To prove the opposite inclusion, let \(H=L_\sigma(r,\eta)\) with \((r,\eta)\in U^\sigma\), and fix
		three distinct leaves \(i,j,k\). Let $v$ be the LCA of all three leaves. Since $v$ is the LCA, the three leaves cannot lie all in the same child subtree of $v$. Hence, either every leaf lies in a different subtree, and thus the three pairwise lowest common
		ancestors coincide, or two leaves lie in a subtree and one in another, in which case two pairwise common ancestors coincide and are ancestors of
		the third. Since the internal heights strictly decrease away from the root,
		either \(H_{ij}=H_{ik}=H_{jk}\), or exactly two values are equal and
		strictly larger than the third. Thus \(H\in\mathfrak U_d\). This proves
		\eqref{eq:ultrametric-stratification}. 
		
		We divide the remainder of the proof by items.
		
		(ii). The set \(D_\sigma(B):=\{(r,\eta)\in \ovU{\sigma} : \eta_v=0 \text{ for } v \in B\}\) is a face of \(\ovU{\sigma}\). Faces are preserved under \(L_\sigma:\R^{K(\sigma)}\to  V_\sigma\) as it is a linear isomorphism.
		If $(r,\eta)\in D_\sigma(B)$, then $\eta_v=0$ for every $v\in B$, and hence
		\(
		L_\sigma(r,\eta)
		=
		r\mathbf 1
		-
		\sum_{v\in I_\sigma\setminus B}
		\eta_v b_{A_v}.
		\)
		By definition, the non-root internal clusters of $\sigma/B$ are precisely
		$\{A_v:v\in I_\sigma\setminus B\}$. Therefore, by Proposition~\ref{prop:linear-hierarchy-cell},
		\(
		L_\sigma(D_\sigma(B))
		=
		\mathbb R\mathbf 1
		+
		\operatorname{cone}
		\{-b_{A_v}:v\in I_\sigma\setminus B\}
		=
		\overline{\mathfrak U}_{\sigma/B}.
		\)
		Finally, every face of \(\ovU\sigma\) is of the form \(D_\sigma(B)\) for some \(B \subset I_\sigma\) and \(L_\sigma\) gives a one-to-one correspondence between faces of \(\ovU\sigma\) and faces of \(\MovU\sigma\).
		
		(i). Take $H=L_\sigma(r,\eta) \in \MovU{\sigma}$. Set \(B(H):=\{v \in I_\sigma:\eta_v=0\}.\) By (ii), \(H \in \MovU{\gamma}\), with \(\gamma=\sigma/B(H)\). Moreover, \(H \in \MU{\gamma}\), because \(\eta_w>0\) for all \(w \in I_\gamma\).		
		Conversely, if \(\gamma \preceq \sigma\), then there is a set \(B \subset I_\sigma\) such that \(\gamma=\sigma /B\). By (ii), \(\MU{\gamma}\subset \MovU{\gamma} \subset \MovU{\sigma}\). 
		
		(iii). If
		\(H\in\MovU{\gamma}\cap
		\overline{\mathfrak U}^{\,\tau}\), by (i) we get \(\sigma(H) \preceq \gamma\) and \(\sigma(H) \preceq \tau\). Therefore \(\mathscr A(\sigma(H))\subset \mathscr A(\gamma)\cap \mathscr A(\tau)=\mathscr A(\gamma \wedge \tau)\). This gives one inclusion. Conversely, \(\gamma\wedge\tau \preceq \gamma\) and \(\gamma\wedge\tau \preceq \tau\). Thus, by (i), \(\MovU{\gamma\wedge\tau} \subset \MovU\gamma \cap \MovU\tau\).
		
		(iv).
		Parts (ii) and (iii) show that the collection is closed under taking
		faces and that intersections are common faces. It is therefore a finite
		polyhedral fan. The dimension of the cone indexed by \(\sigma\) is
		\(K(\sigma)\), so the maximal cones are exactly the binary ones. Its
		support is \(\mathfrak U_d\) by
		\eqref{eq:ultrametric-stratification}; being a finite union of closed
		polyhedral cones, it is closed.
	\end{proof}
	
	For \(H\in\mathfrak U_d\), we write \(\sigma(H)\) for its unique reduced hierarchy and \((r(H),\eta(H))\in U^{\sigma(H)}\) for the corresponding
	unique coordinates and endow \(\mathfrak{U}_d\) with the Euclidean distance \(d_E(H,H'):=\|H-H'\|_2\) inherited from the ambient space \(\R^N\). Theorem \ref{thm:ultrametric-polyhedral-characterization} also gives the following facts,
	which will be used below.
	
	\begin{corollary}
				\label{cor:closed-ultrametric-image}
		The following statements hold.
		\begin{enumerate}[label=(\roman*),ref=(\roman*)]
			\item  \label{it:cor1-it2} Each \(\mathfrak U^\sigma\) is a
			smooth embedded manifold of dimension \(K(\sigma)\);
			\item \label{it:cor1-it3}\(\mathfrak U^\gamma\subset
			\overline{\mathfrak U^\sigma}\setminus\mathfrak U^\sigma
			\Leftrightarrow
			\gamma\prec\sigma;\)
			\item \label{it:cor1-it4}every \(H\in\mathbb R^{N}\) has at least one nearest
			point in \(\mathfrak U_d\).
		\end{enumerate}
	\end{corollary}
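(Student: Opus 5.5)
All three items follow from Proposition~\ref{prop:linear-hierarchy-cell} and Theorem~\ref{thm:ultrametric-polyhedral-characterization}; the plan is to take them in order.

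\emph{Item (i).} Proposition~\ref{prop:linear-hierarchy-cell} says that \(L_\sigma\) is an injective linear map. It is therefore a linear isomorphism of \(\mathbb R^{K(\sigma)}\) onto \(V_\sigma\), and hence a smooth embedding of \(\mathbb R^{K(\sigma)}\) into \(\mathbb R^N\). Its restriction to the open set \(U^\sigma\subset\mathbb R^{K(\sigma)}\) is still an embedding. The image \(\mathfrak U^\sigma\) is therefore an open subset of the \(K(\sigma)\)-dimensional linear subspace \(V_\sigma\), hence an embedded submanifold of dimension \(K(\sigma)\).

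\emph{Item (ii).} First I will show that \(\overline{\mathfrak U^\sigma}=\MovU{\sigma}\). Since \(U^\sigma\) is dense in \(\ovU{\sigma}\) and \(L_\sigma\) is continuous, \(\MovU{\sigma}\subseteq\overline{\mathfrak U^\sigma}\). Conversely, \(\MovU{\sigma}\) is closed and contains \(\mathfrak U^\sigma\), so \(\overline{\mathfrak U^\sigma}\subseteq\MovU{\sigma}\). Alternatively, this equality is the statement that a closed convex cone is the closure of its relative interior, which is available from Proposition~\ref{prop:linear-hierarchy-cell}.

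By Theorem~\ref{thm:ultrametric-polyhedral-characterization}(i), \(\MovU{\sigma}\) is the disjoint union of the strata \(\mathfrak U^\gamma\) over \(\gamma\preceq\sigma\). Removing \(\mathfrak U^\sigma\) gives
\[
\overline{\mathfrak U^\sigma}\setminus\mathfrak U^\sigma=\bigsqcup_{\gamma\prec\sigma}\mathfrak U^\gamma .
\]
Reading off this identity gives both directions of the equivalence.
\begin{itemize}
\item If \(\gamma\prec\sigma\), then \(\mathfrak U^\gamma\) is one of the pieces of the union, so it is contained in the boundary set.
\item Conversely, suppose \(\mathfrak U^\gamma\subseteq\overline{\mathfrak U^\sigma}\setminus\mathfrak U^\sigma\). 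The stratum \(\mathfrak U^\gamma\) is nonempty, since \(U^\gamma\ne\emptyset\). The strata of \(\mathfrak U_d\) are pairwise disjoint by \eqref{eq:ultrametric-stratification}. Hence \(\mathfrak U^\gamma\) must coincide with one of the pieces \(\mathfrak U^{\gamma'}\) with \(\gamma'\prec\sigma\). Uniqueness of the reduced hierarchy forces \(\gamma=\gamma'\), so \(\gamma\prec\sigma\).
\end{itemize}
The only care needed here is the nonemptiness of \(\mathfrak U^\gamma\) and the disjointness of the strata.

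\emph{Item (iii).} By Theorem~\ref{thm:ultrametric-polyhedral-characterization}(iv), \(\mathfrak U_d\) is closed and nonempty. Fix \(H\in\mathbb R^N\) and any \(H_0\in\mathfrak U_d\). The set \(\mathfrak U_d\cap\{H':\|H'-H\|_2\le\|H_0-H\|_2\}\) is compact and nonempty. The continuous function \(H'\mapsto\|H'-H\|_2\) attains its minimum on it, and that minimizer is a nearest point in \(\mathfrak U_d\).

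Alternatively, one can project onto each of the finitely many closed convex cones \(\MovU{\sigma}\), \(\sigma\in\Bd\), and take the best of these projections. Nearest points need not be unique, since \(\mathfrak U_d\) is not convex, but the statement only asks for existence.

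The main point requiring attention in this plan is the closure identity in (ii); everything else is routine bookkeeping.
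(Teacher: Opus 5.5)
Your proof is correct and follows the paper's route: (i) via the linear isomorphism $L_\sigma$ onto $V_\sigma$, (ii) by reading the frontier relation off Theorem~\ref{thm:ultrametric-polyhedral-characterization}(i), and (iii) from closedness of $\mathfrak U_d$ together with coercivity of the distance function. Your version of (ii) is more complete than the paper's one-line justification, because you make explicit three things the paper leaves implicit: the closure identity $\overline{\mathfrak U^\sigma}=\MovU{\sigma}$, the nonemptiness of each stratum, and the disjointness of the strata.
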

	
	\begin{proof}
		We prove the statements point by point.
		
		
		(i). Since \(L_\sigma:\R^{K(\sigma)}\to  V_\sigma\) is a linear isomorphism, \(L_\sigma|_{U^\sigma}:U^\sigma \to \mathfrak{U}^\sigma\) is a diffeomorphism. Thus, \(\MU\sigma\) is an open subset of the \(K(\sigma)\)-dimensional linear space \( V_\sigma\).
		
		(ii) follows from Theorem \ref{thm:ultrametric-polyhedral-characterization}, (i).
		
		(iii). Since
		\(\mathfrak U_d\) is closed in a finite-dimensional Euclidean space, the
		continuous coercive function \(G\mapsto\|H-G\|_2\) attains its minimum on
		\(\mathfrak U_d\).
	\end{proof}
	
We next record the local polyhedral geometry that will be used in the
asymptotic analysis. By Proposition~\ref{prop:linear-hierarchy-cell},
the tangent space of the stratum \(\mathfrak U^\sigma\) is
\[
V_\sigma
=
\Span\left(
\mathbf 1,
\{-b_A:A\in\mathscr A(\sigma)\setminus\{[d]\}\}
\right).
\]
	If \(\sigma\preceq\tau\), set
	\begin{equation}
		\label{eq:relative-refinement-cone}
		\mathcal T_{\tau\mid\sigma}
		:=V_\sigma+
		\operatorname{cone}\left\{
		-b_A:
		A\in\mathscr A(\tau)\setminus\mathscr A(\sigma)
		\right\},
	\end{equation}
	and observe that
	\[\Span(\cl T_{\tau\mid\sigma})=\Span(\MovU\tau)=L_\tau(\R^{K(\tau)})=V_\tau.\]
	For a closed set \(F\subset\R^N\) and \(x\in F\), we use the Bouligand
	tangent cone
	\[
	\operatorname{Tan}_x(F)
	:=\left\{
	u\in\R^N:
	\text{there exist }t_n\downarrow0\text{ and }u_n\to u
	\text{ with }x+t_nu_n\in F
	\right\}.
	\]
	Moreover, we say that \(F\) is
	\emph{Chernoff regular at \(x\)} if, for every
	\(u\in\operatorname{Tan}_x(F)\), there exist \(\varepsilon>0\) and a
	continuous path \(\gamma:[0,\varepsilon)\to F\) such that
	\[
	\gamma(0)=x,
	\qquad
	\lim_{t\downarrow0}\frac{\gamma(t)-x}{t}=u.
	\]
	
	We observe that, if \(H\in\mathfrak U^\sigma\), finiteness of the fan and
	Theorem~\ref{thm:ultrametric-polyhedral-characterization} imply that there
	exists \(\delta>0\) such that
	\begin{equation}
		\label{eq:local-refinement-union}
		\mathfrak U_d\cap B_\delta(H)
		=
		\bigcup_{\tau:\,\sigma\preceq\tau}
		\left(
		\overline{\mathfrak U}^{\,\tau}\cap B_\delta(H)
		\right).
	\end{equation}
	Thus, locally around a point of \(\mathfrak U^\sigma\), only refinements of
	\(\sigma\) occur.

	\begin{lemma}
		\label{lem:local-tangent-fan}
		Let \(H\in\mathfrak U^{\sigma}\). For every \(R<\infty\)
		there exists \(t_R>0\) such that, simultaneously for every refinement
		\(\tau\succeq\sigma\),
		\begin{equation}
			\label{eq:single-refinement-exact-local-rescaling}
		t^{-1}
		\left(
		\overline{\mathfrak U}^{\,\tau}-H
		\right)
		\cap\overline B_R(0)
		=
		\mathcal T_{\tau\mid\sigma}
		\cap\overline B_R(0),
		\qquad 0<t<t_R.
		\end{equation}
		Consequently,
		\begin{equation}
			\label{eq:exact-local-tangent-rescaling}
			t^{-1}(\mathfrak U_d-H)\cap\overline B_R(0)
			=
			\mathcal T_{\sigma}\cap\overline B_R(0),
			\qquad 0<t<t_R,
		\end{equation}
		and
		\begin{equation}
			\label{eq:tangent-fan-formula}
			\operatorname{Tan}_{H}(\mathfrak U_d)
			=
			\mathcal T_{\sigma}
			:=
			\bigcup_{\tau\in\Hd:\,\sigma\preceq\tau}
			\mathcal T_{\tau\mid\sigma}.
		\end{equation}
		The set \(\mathcal T_{\sigma}\) is a finite polyhedral fan with common
		lineality space \(V_{\sigma}\). 
	\end{lemma}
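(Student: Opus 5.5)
The plan is to work in the coordinates of the refinement \(\tau\). By Proposition~\ref{prop:linear-hierarchy-cell}, \(L_\tau\) is a linear isomorphism of \(\R^{K(\tau)}\) onto \(V_\tau\), and \(\MovU{\tau}=L_\tau(\ovU{\tau})\). Since \(\sigma\preceq\tau\), Theorem~\ref{thm:ultrametric-polyhedral-characterization}(i) gives \(H\in\MovU{\tau}\), so \(H=L_\tau(r^0,\eta^0)\) with a particular sign pattern. For \(A\in\mathscr A(\sigma)\setminus\{[d]\}\) we have \(\eta^0_A>0\). For \(A\in\mathscr A(\tau)\setminus\mathscr A(\sigma)\) we have \(\eta^0_A=0\), because \(\sigma=\tau/B\) with \(B\) the clusters of \(\tau\) not in \(\sigma\), and the coordinates of \(H\) in \(\tau\) are obtained from its \(\sigma\)-coordinates by inserting zeros.

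Then \(t^{-1}(\MovU{\tau}-H)=L_\tau\bigl(t^{-1}(\ovU{\tau}-(r^0,\eta^0))\bigr)\). The rescaled set \(t^{-1}(\ovU{\tau}-x^0)\) is the set of \((s,\zeta)\) satisfying two conditions. First, \(\zeta_A\ge -\eta^0_A/t\) for \(A\in\mathscr A(\sigma)\setminus\{[d]\}\). Second, \(\zeta_A\ge0\) for the new clusters \(A\in\mathscr A(\tau)\setminus\mathscr A(\sigma)\). The root coordinate \(s\) is free.

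Let \(c_\tau:=\|L_\tau^{-1}\|\), taken as an operator norm from \(V_\tau\). Set \(m:=\min_{A}\eta^0_A>0\), the minimum being over \(A\in\mathscr A(\sigma)\setminus\{[d]\}\); this minimum does not depend on \(\tau\), since these coordinates are the \(\sigma\)-increments of \(H\). Suppose \(u\in V_\tau\) with \(\|u\|\le R\). Then \(\|L_\tau^{-1}u\|_\infty\le c_\tau R\). Hence for \(t<m/(c_\tau R)\) the first set of constraints is automatically satisfied inside the ball.

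So inside \(\overline B_R(0)\) the rescaled set equals \(L_\tau\) of \(\{(s,\zeta):\zeta_A\ge0 \text{ for new } A\}\). This equals
\[
\R\mathbf 1+\Span\{b_A:A\in\mathscr A(\sigma)\setminus\{[d]\}\}+\operatorname{cone}\{-b_A:A\in\mathscr A(\tau)\setminus\mathscr A(\sigma)\}=\mathcal T_{\tau\mid\sigma}.
\]
Both sides lie in \(V_\tau\), so intersecting with the ball is harmless. Taking \(t_R:=\min_\tau m/(c_\tau R)\) over the finitely many refinements proves \eqref{eq:single-refinement-exact-local-rescaling}. The one point needing care is that the \(\sigma\)-coordinates genuinely appear unconstrained in sign after rescaling; this is exactly what the bound via \(c_\tau\) handles.

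For \eqref{eq:exact-local-tangent-rescaling}, I would use \eqref{eq:local-refinement-union}. Shrink \(t_R\) further so that \(tR<\delta\); then \(H+t\overline B_R(0)\subset B_\delta(H)\). On that ball \(\mathfrak U_d\) is the union of the \(\MovU{\tau}\) over \(\tau\succeq\sigma\), so rescaling and applying the previous identity cone by cone gives \(\mathcal T_\sigma\cap\overline B_R(0)\).

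The tangent cone formula \eqref{eq:tangent-fan-formula} then follows in two directions.
\begin{itemize}
\item If \(u\in\mathcal T_\sigma\), take \(R=\|u\|+1\) and \(u_n=u\). Then \(H+t_nu\in\mathfrak U_d\) for small \(t_n\), so \(u\in\operatorname{Tan}_H(\mathfrak U_d)\).
\item Conversely, suppose \(H+t_nu_n\in\mathfrak U_d\) with \(u_n\to u\). Then eventually \(u_n\in\mathcal T_\sigma\cap\overline B_R(0)\), and \(\mathcal T_\sigma\) is closed as a finite union of polyhedral cones, so \(u\in\mathcal T_\sigma\).
\end{itemize}

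Finally, the fan structure of \(\mathcal T_\sigma\) can be read off from its local agreement with \(\mathfrak U_d\). Each \(\mathcal T_{\tau\mid\sigma}\) is the image under \(L_\tau\) of the polyhedral cone \(\R^{1+|\mathscr A(\sigma)|-1}\times[0,\infty)^{\text{new}}\), which has lineality \(V_\sigma\). Its faces come from setting some new coordinates to zero, giving \(\mathcal T_{\tau/B'\mid\sigma}\) with \(B'\) a set of new clusters. For intersections I would show \(\mathcal T_{\tau\mid\sigma}\cap\mathcal T_{\tau'\mid\sigma}=\mathcal T_{\tau\wedge\tau'\mid\sigma}\). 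Intersect with \(\overline B_R\) and use \eqref{eq:single-refinement-exact-local-rescaling} with a common \(t\). Then apply Theorem~\ref{thm:ultrametric-polyhedral-characterization}(iii), together with the fact that \(\tau\wedge\tau'\succeq\sigma\). Letting \(R\to\infty\) gives the identity. Closure under faces together with this intersection property makes \(\mathcal T_\sigma\) a finite polyhedral fan with common lineality space \(V_\sigma\).
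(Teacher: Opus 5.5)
Your proposal is correct and follows essentially the same route as the paper. You write \(H\) in \(\tau\)-coordinates with positive \(\sigma\)-increments and zero new increments, and use a uniform bound on \(L_\tau^{-1}\) over \(V_\tau\cap\overline B_R(0)\) so that the \(\sigma\)-constraints are inactive for small \(t\); you then take the minimum over the finitely many refinements with \(tR<\delta\) from \eqref{eq:local-refinement-union} and read off the tangent cone from the exact rescaling. Your fan-structure argument, which derives \(\mathcal T_{\tau\mid\sigma}\cap\mathcal T_{\tau'\mid\sigma}=\mathcal T_{\tau\wedge\tau'\mid\sigma}\) from the rescaling identity and Theorem~\ref{thm:ultrametric-polyhedral-characterization}(iii), is simply a more explicit version of the paper's remark that faces and intersections are inherited from \(\mathcal F_d\).
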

	
	\begin{proof}
		Fix \(R<\infty\) and let \(\delta>0\) be as in
		\eqref{eq:local-refinement-union}. Fix a refinement \(\tau\) of \(\sigma\). In the closed
		\(\tau\)-coordinates, write
		\(
		H=L_\tau(r,\eta^{\tau}).
		\)
		We observe that the coordinates \(\eta_v\) corresponding to clusters \(A_v \in \mathscr{A}(\sigma)\setminus\{[d]\}\)
		are strictly positive, whereas those corresponding to \(\mathscr{A}(\tau)\setminus \mathscr{A}(\sigma)\) are zero. If
		\(u\in V_\tau\), write
		\(
		L_\tau^{-1}(u)=(a,\xi).
		\)
		Then, by linearity, we have
		\(
		H+tu
		=
		L_\tau(r+ta,\eta^{\tau}+t\xi),
		\)
		and hence \(H+tu \in \MovU\tau\) if and only if
		\begin{equation}
			\label{eq:coordinate-condition}
			\eta^{\tau}_v+t\xi_v \ge 0, \quad \text{for all }v \in I_\tau.
		\end{equation}		
		Since
		\(L_\tau^{-1}\) is continuous, there is
		\(M_{\tau,R}<\infty\) such that
		\(
		|\xi_v|\le M_{\tau,R}
		\)
		for every
		\(u\in V_\tau\cap\overline B_R(0)\) and every \(v\in I_\tau\).
		Since \(\eta^\tau_v>0\) for \(A_v\in\mathscr A(\sigma)\setminus\{[d]\}\), there exists \(t_{R,\tau}>0\)
		such that, whenever \(0<t<t_{R,\tau}\),
		\(
		\eta^\tau_v+t\xi_v>0
		\).
		For \(A_v\in
		\mathscr A(\tau)\setminus\mathscr A(\sigma)\), however,
		\(\eta^\tau_v=0\), so
		\eqref{eq:coordinate-condition} reduces to
		\(
		\xi_v\ge0.
		\)
		In the \(\tau\)-coordinates, these are precisely the inequalities
		defining
		\(
		\mathcal T_{\tau\mid\sigma}
		\).
		Therefore, for every
		\(u\in V_\tau\cap\overline B_R(0)\) and
		\(0<t<t_{R,\tau}\),
		\begin{equation}
			\label{eq:iff-relation}
			H+tu\in\MovU\tau \text{ if and only if } 	u\in\mathcal T_{\tau\mid\sigma}.
		\end{equation}
		There are only finitely many refinements of \(\sigma\). Hence we may
		choose
		\(
		t_R
		<
		\min\{
		\delta/(R+1),
		\min_{\tau:\,\sigma\preceq\tau}t_{R,\tau}\}.
		\)
		\eqref{eq:iff-relation} then holds simultaneously for every refinement
		\(\tau\succeq\sigma\) and every \(0<t<t_R\), proving \eqref{eq:single-refinement-exact-local-rescaling}.
		
		Moreover, \(R\, t<\delta\). Hence,
		\eqref{eq:local-refinement-union} and \eqref{eq:single-refinement-exact-local-rescaling} give
		\[
		\begin{aligned}
			t^{-1}(\mathfrak U_d-H)\cap\overline B_R(0)
			&=
			\bigcup_{\tau:\,\sigma\preceq\tau}
			\left[
			t^{-1}
			\left(
			\overline{\mathfrak U}^{\,\tau}-H
			\right)
			\cap\overline B_R(0)
			\right]=
			\bigcup_{\tau:\,\sigma\preceq\tau}
			\left(
			\mathcal T_{\tau\mid\sigma}
			\cap\overline B_R(0)
			\right)=
			\mathcal T_\sigma\cap\overline B_R(0).
		\end{aligned}
		\]
		This proves \eqref{eq:exact-local-tangent-rescaling}, and
		\eqref{eq:tangent-fan-formula} follows from the definition of the
		Bouligand tangent cone.
		Finally, the cones in \eqref{eq:tangent-fan-formula} are indexed by the refinements
		\(\tau\) of \(\sigma\), equivalently by the
		cones of \(\mathcal F_d\) containing the face
		\(\overline{\mathfrak U}_\sigma\). Since faces and intersections among these cones
		are inherited from the finite fan \(\mathcal F_d\), they form a finite
		polyhedral fan.
	\end{proof}
	
\begin{corollary}
	\label{cor:chernoff-regularity}
	Let \(H\in\mathfrak U^\sigma\). Then \(\mathfrak U_d\) is
	Chernoff regular at \(H\).
\end{corollary}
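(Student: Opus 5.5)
The plan is to obtain Chernoff regularity directly from the exact local rescaling in Lemma~\ref{lem:local-tangent-fan}, using straight-line paths. Fix \(u\in\operatorname{Tan}_H(\mathfrak U_d)\). By \eqref{eq:tangent-fan-formula}, \(u\in\mathcal T_\sigma\), so there is a refinement \(\tau\succeq\sigma\) with \(u\in\mathcal T_{\tau\mid\sigma}\). I will then take the linear path
\[
\gamma(t):=H+tu,\qquad t\in[0,\varepsilon),
\]
and show that it stays in \(\mathfrak U_d\) for a suitable \(\varepsilon>0\).

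To choose \(\varepsilon\), set \(R:=\|u\|_2\), or \(R:=1\) if \(u=0\), and let \(t_R>0\) be the constant given by Lemma~\ref{lem:local-tangent-fan}. Take \(\varepsilon:=t_R\). For \(0<t<\varepsilon\) we have \(u\in\mathcal T_{\tau\mid\sigma}\cap\overline B_R(0)\), so \eqref{eq:single-refinement-exact-local-rescaling} gives
\[
u\in t^{-1}\bigl(\overline{\mathfrak U}^{\,\tau}-H\bigr).
\]
Hence \(H+tu\in\overline{\mathfrak U}^{\,\tau}\subset\mathfrak U_d\). At \(t=0\) we have \(\gamma(0)=H\in\mathfrak U_d\). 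Alternatively, one can use \eqref{eq:exact-local-tangent-rescaling} directly, which avoids choosing \(\tau\) at all.

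It remains to check the two defining properties of the path. It is affine, hence continuous, and \((\gamma(t)-H)/t=u\) holds for every \(t>0\), so the limit is trivially \(u\). This completes the argument.

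There is no genuine obstacle here. The only delicate point is that the rescaling identity holds exactly, not merely asymptotically, on a whole interval \(0<t<t_R\) and uniformly on the ball \(\overline B_R(0)\). This is precisely what Lemma~\ref{lem:local-tangent-fan} supplies, and it comes from the finiteness of the fan together with the strict positivity of the \(\sigma\)-increments.
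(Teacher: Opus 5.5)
Your proof is correct and follows the paper's own argument: the straight-line path \(\gamma(t)=H+tu\) stays in \(\mathfrak U_d\) for \(0<t<t_R\) by the exact local rescaling of Lemma~\ref{lem:local-tangent-fan}. The only difference is cosmetic. You apply the single-refinement identity with the closed ball and \(R=\|u\|_2\), whereas the paper applies the whole-fan identity with \(R>\|u\|_2\); both are valid.
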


\begin{proof}
	Let
	\(u\in\mathcal T_\sigma\), and choose \(R>\|u\|_2\).
	By \eqref{eq:exact-local-tangent-rescaling}, there exists
	\(t_R>0\) such that
	\[
	t^{-1}(\mathfrak U_d-H)\cap B_R(0)
	=
	\mathcal T_\sigma\cap B_R(0),
	\qquad 0<t<t_R.
	\]
	Since \(u\in\mathcal T_\sigma\cap B_R(0)\), it follows that
	\(
	H+tu\in\mathfrak U_d\) for \(0<t<t_R.
	\)
	Thus the path \(\gamma(t)=H+tu\) is feasible for all sufficiently
	small \(t\ge0\) and satisfies
	\((\gamma(t)-H)/t=u.
	\)
	Hence \(\mathfrak U_d\) is Chernoff regular at \(H\).
\end{proof}

	\section{Homogeneous Archimedean realization}
	\label{sec:archimedean-realization}
	
	We now introduce a homogeneous Archimedean family and realize the
	ultrametric hierarchy space \(\mathfrak U_d\) as a class of nested copulas.
	
	\begin{assumption}{K}
		\label{ass:K}
		Let
		\(
		\mathcal K=\{C_\theta^{(m)}:\theta\in\Theta,\ m=2,\ldots,d\}
		\)
		be a one-parameter homogeneous Archimedean family, where
		\(
		C_\theta^{(m)}(u_1,\ldots,u_m)
		:=
		\psi_\theta(
		\psi_\theta^{-1}(u_1)+\cdots+\psi_\theta^{-1}(u_m)
		).
		\)
		We assume:
		\begin{enumerate}[label=\emph{(K\arabic*)}, leftmargin=3em]
			\item\label{it:k1} For every \(\theta\in\Theta\) and every \(m=2,\ldots,d\),
			\(C_\theta^{(m)}\) is an \(m\)-variate copula.
			\item \label{ass:bivariate-identifiability} The bivariate family is
			topologically identifiable: 
			\(\|C_{\theta_n}^{(2)}-C_\theta^{(2)}\|_\infty\to0\) implies
			\(\theta_n\to\theta\).
			\item \label{it:continuous-family} For every \(m=2,\ldots,d\),
			\(\theta_n\to\theta\) implies
			\(\|C_{\theta_n}^{(m)}-C_\theta^{(m)}\|_\infty\to0\).
		\end{enumerate}
	\end{assumption}

	Given a hierarchy \(\sigma\) and a parameter vector
	\((\theta_v)_{v\in\Int(\sigma)}\), define the copula function
	\(C_{\sigma,\theta}\) recursively. If \(i\) is a leaf, set
	\(C_i(u):=u_i\). If \(v\) has children \(w_1,\ldots,w_m\), define
	\begin{equation}
		\label{eq:recursive-construction}
		C_v(u)
		:=C_{\theta_v}^{(m)}
		\bigl(C_{w_1}(u),\ldots,C_{w_m}(u)\bigr).
	\end{equation}
	The function represented by the decorated hierarchy is the value \(C_\rho\)
	at the root.
	
	The recursive construction does not automatically define a copula for an arbitrary assignment of parameters to the internal vertices. Although every node function \(C_{\theta_v}^{(m)}\) is itself a copula, inserting a child copula into its parent need not preserve \(d\)-increasingness. The resulting function may therefore fail to be a multivariate distribution function. A nesting condition is imposed precisely to prevent this failure: it ensures that every parent--child composition occurring in the recursion is admissible and hence that the value \(C_\rho\) at the root is a \(d\)-dimensional copula. In the homogeneous one-parameter setting considered here, we encode this condition as follows.

	\begin{assumption}{H}
		\label{ass:kendall-height}
		Let
		\(
		\tilde\tau_{\mathcal K}(\theta)
		:=
		\tau\!\left(C_\theta^{(2)}\right),
		\)
		where \(\tau\) denotes Kendall's tau. We assume that there exists
		\(\tau_{\mathcal K}^\star\in(0,1]\) such that
		\(
		\tilde\tau_{\mathcal K}:
		\Theta\to(0,\tau_{\mathcal K}^\star)
		\)
		is a strictly increasing homeomorphism.
	\end{assumption}
	\begin{assumption}{N}
		\label{ass:nesting}
		The homogeneous parameter order
		\(
		\theta_{\parv(v)}\le \theta_v\) for \(v\in\Iint,
		\)
		is a sufficient nesting condition for the recursion
		\eqref{eq:recursive-construction}.
	\end{assumption}

		Define the normalized Kendall coordinate and its logit height by
		\begin{equation}
			\label{eq:kendall-height}
			\kappa_{\mathcal K}(\theta)
			:=
			\frac{\tilde\tau_{\mathcal K}(\theta)}
			{\tau_{\mathcal K}^\star},
			\qquad
			h_{\mathcal K}(\theta)
			:=
			\log
			\frac{1-\kappa_{\mathcal K}(\theta)}
			{\kappa_{\mathcal K}(\theta)}
			=
			\log
			\frac{
				\tau_{\mathcal K}^\star-\tilde\tau_{\mathcal K}(\theta)
			}{
				\tilde\tau_{\mathcal K}(\theta)
			}.
		\end{equation}
		Then \(h_{\mathcal K}:\Theta\to\mathbb R\) is a strictly decreasing
		homeomorphism. We write
		\begin{equation}
			\label{eq:theta-from-kendall-height}
			\theta_{\mathcal K}(h)
			:=
			h_{\mathcal K}^{-1}(h)
			=
			\tilde\tau_{\mathcal K}^{-1}
			\left(
			\frac{\tau_{\mathcal K}^\star}{1+e^h}
			\right),
			\qquad h\in\mathbb R.
		\end{equation}
		In particular, the nesting condition is equivalently
		\(
			h_{\mathcal K}(\theta_{\parv(v)})
			\ge
			h_{\mathcal K}(\theta_v),\) for all \( v\in\Iint.
		\)
		Let
		\(
		D_1(\theta)
		:=
		1/\theta
		\int_0^\theta t/(e^t-1)\,dt
		\)
		denote the Debye function of order one.

	\begin{proposition}
		\label{lem:admissible-families}
		The families listed in the Table \ref{tab:kendall-profiles} below satisfy
		Assumptions~\assref{ass:K},~\assref{ass:kendall-height} and \assref{ass:nesting}.
		The last column gives the upper endpoint
		\(\tau_{\mathcal K}^\star\) of the positive-dependence Kendall range.
		\begin{table}[pos=htbp]
			\centering
			\small
			\setlength{\tabcolsep}{4pt}
			\renewcommand{\arraystretch}{1.15}
			\begin{tabular}{@{}lcccc@{}}
				\toprule
				Family \(\mathcal K\)
				&
				\(\Theta\)
				&
				\(\psi_\theta(u)\)
				&
				\(\tilde\tau_{\mathcal K}(\theta)
				=\tau(C^{(2)}_\theta)\)
				&
				\(\tau^\star_{\mathcal K}\)
				\\
				\midrule
				Clayton
				&
				\((0,\infty)\)
				&
				\((1+u)^{-1/\theta}\)
				&
				\(\displaystyle \frac{\theta}{\theta+2}\)
				&
				\(1\)
				\\[1.2ex]
				
				Frank
				&
				\((0,\infty)\)
				&
				\(\displaystyle
				-\frac{1}{\theta}
				\log\!\left(
				1-(1-e^{-\theta})e^{-u}
				\right)\)
				&
				\(\displaystyle
				1+\frac{4}{\theta}
				\bigl(D_1(\theta)-1\bigr)\)
				&
				\(1\)
				\\[1.2ex]
				
				AMH
				&
				\((0,1)\)
				&
				\(\displaystyle
				\frac{1-\theta}{e^u-\theta}\)
				&
				\(\displaystyle
				1-
				\frac{
					2\{\theta+(1-\theta)^2\log(1-\theta)\}
				}{
					3\theta^2
				}\)
				&
				\(\displaystyle \frac13\)
				\\[1.2ex]
				
				Gumbel
				&
				\((1,\infty)\)
				&
				\(\exp(-u^{1/\theta})\)
				&
				\(\displaystyle 1-\frac1\theta\)
				&
				\(1\)
				\\[1.2ex]
				
				Joe
				&
				\((1,\infty)\)
				&
				\(1-(1-e^{-u})^{1/\theta}\)
				&
				\(\displaystyle
				1-
				4\sum_{k=1}^{\infty}
				\frac{1}{
					k(\theta k+2)(\theta(k-1)+2)
				}\)
				&
				\(1\)
				\\
				\bottomrule
			\end{tabular}
			\caption{Admissible homogeneous Archimedean families, their generators,
				Kendall coefficients, and positive-dependence ranges.}
			\label{tab:kendall-profiles}
		\end{table}
	\end{proposition}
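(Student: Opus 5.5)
We verify the three assumptions family by family, leaning on classical results wherever they exist. Nothing here is new mathematics; the work lies in assembling the right ingredients.

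\emph{Step 1: Assumption~(K1).} Each generator in Table~\ref{tab:kendall-profiles} is completely monotone on \([0,\infty)\) for the stated parameter range. Concretely:
\begin{itemize}
\item Clayton \((1+u)^{-1/\theta}\) is the Laplace transform of a Gamma law.
\item Gumbel \(\exp(-u^{1/\theta})\) is the Laplace transform of a positive stable law, since \(1/\theta\in(0,1)\).
\item For Frank, AMH and Joe, \(\psi_\theta\) is the Laplace transform of a logarithmic, geometric and Sibuya-type law respectively (e.g.\ \citep{HofertMaechler2011,McNeil2008}).
\end{itemize}
By Kimberling's theorem, \(C^{(m)}_\theta\) is then an \(m\)-copula for every \(m\), hence for \(m\le d\).

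\emph{Step 2: Assumptions~(K3) and~(H), then~(K2).}
\begin{itemize}
\item For (K3), \(\theta\mapsto\psi_\theta(u)\) and \(\theta\mapsto\psi_\theta^{-1}(u)\) are jointly continuous on \(\Theta\times(0,1]\). So \(C^{(m)}_{\theta_n}\to C^{(m)}_\theta\) pointwise. Copulas are \(1\)-Lipschitz, and pointwise convergence of equicontinuous functions on the compact \([0,1]^m\) is uniform.
\item For (H), the closed forms in the table are continuous. Strict monotonicity is immediate for Clayton and Gumbel. For AMH, Frank and Joe it follows from the concordance ordering of these families in \(\theta\), which is classical \citep{Joe1997}; strict increase of Kendall's tau under strict concordance ordering then gives strict monotonicity. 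The boundary limits give the ranges \((0,\tau^\star_{\mathcal K})\): \(\tilde\tau\to0\) at the lower endpoint, and \(\tilde\tau\to1\), respectively \(\tfrac13\) for AMH as \(\theta\uparrow1\). A continuous strictly increasing surjection between intervals is a homeomorphism.
\item (K2) follows from (H) and continuity of Kendall's tau under uniform convergence of bivariate copulas. If \(\|C^{(2)}_{\theta_n}-C^{(2)}_\theta\|_\infty\to0\), then \(\tilde\tau(\theta_n)\to\tilde\tau(\theta)\), and continuity of \(\tilde\tau^{-1}\) gives \(\theta_n\to\theta\). Continuity of \(\tau\) holds because \(\tau(C)=4\int C\,dC-1\) and weak convergence of copula measures is equivalent to uniform convergence of the distribution functions.
\end{itemize}

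\emph{Step 3: Assumption~(N). This is the main obstacle.} By the McNeil--Hofert sufficient nesting condition, the recursion is a copula when, for each parent--child pair, \(\psi^{-1}_{\theta_{\parv(v)}}\circ\psi_{\theta_v}\) has a completely monotone derivative. Lemma-type results in \citep{HofertMaechler2011,McNeil2008} verify that this holds for all five families exactly when \(\theta_{\parv(v)}\le\theta_v\). I would cite these results. For Clayton and Gumbel I would also check directly: the inner compositions are \((1+u)^{\alpha}-1\) and \(u^{\alpha}\) with \(\alpha=\theta_p/\theta_v\in(0,1]\), and both are Bernstein functions.

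The remaining families require the known series or Laplace-exponent arguments, and these are the places I would rely entirely on the literature. Equal parameters give the identity map, so equality is covered as well.
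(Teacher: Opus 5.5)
Your overall structure matches the paper's proof. You use complete monotonicity (Kimberling) for (K1), and pointwise convergence plus the $1$-Lipschitz equicontinuity of copulas for (K3). For (K2) you use continuity of Kendall's tau under uniform convergence followed by inversion of $\tilde\tau_{\mathcal K}$, and for (N) you cite the completely-monotone-derivative nesting criterion. The one real departure is strict monotonicity of $\tilde\tau_{\mathcal K}$ for AMH, Frank and Joe: the paper appeals to derivative computations on the closed forms, while you go through the concordance ordering.

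As written, that step has a gap. The classical ordering results only give $C^{(2)}_{\theta_1}\le C^{(2)}_{\theta_2}$. Moreover, ``strict increase of Kendall's tau under strict concordance ordering'' is not a general fact if strict means only $C_1\le C_2$, $C_1\neq C_2$. Using $\int C_2\,d\mu_{C_1}=\int C_1\,d\mu_{C_2}$ one gets
\[
\tau(C_2)-\tau(C_1)=4\int (C_2-C_1)\,d\mu_{C_1}+4\int (C_2-C_1)\,d\mu_{C_2}.
\]
This vanishes as soon as neither copula measure charges the open set $\{C_2>C_1\}$, which can happen for copulas with singular parts. So you need one more ingredient, and there are two easy ways to supply it.

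\emph{First fix: strict ordering on the whole open square.} Show that $F=\psi_{\theta_1}^{-1}\circ\psi_{\theta_2}$ is strictly concave with $F(0)=0$. For Clayton and Gumbel, $F$ is $(1+t)^a-1$ and $t^a$ with $a=\theta_1/\theta_2<1$. For AMH, Frank and Joe, the second-derivative computations in Appendix~\ref{app:concavity} are strict, since $s^a<1-a+as$ for $s\in(0,1)$. Strict concavity with $F(0)=0$ gives strict subadditivity on $(0,\infty)$. Because all five generators are strictly decreasing, this yields $C^{(2)}_{\theta_1}<C^{(2)}_{\theta_2}$ on $(0,1)^2$. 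Every copula measure gives mass one to $(0,1)^2$, so the displayed difference is strictly positive.

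\emph{Second fix: full support.} The densities are strictly positive on $(0,1)^2$ (Lemma~\ref{lem:hac-positive-density}). Hence $\{C_2>C_1\}$ has positive $\mu_{C_1}$-mass provided $C^{(2)}_{\theta_1}\neq C^{(2)}_{\theta_2}$. That inequality must be checked directly, not via (K2): you derive (K2) from (H), so using it here would be circular.

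Once repaired, your route is more conceptual than the paper's, since it avoids differentiating the Debye function and the Joe series. It is the same mechanism the paper uses for Spearman's rho, where strictness is automatic because the integrating measure is Lebesgue measure rather than the copula measure. The rest of the proposal is fine. In (N) you only need the ``if'' direction of your ``exactly when''.
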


	\begin{proof}
		For each family in the table, the standard Archimedean generator is
		completely monotone on the indicated parameter space and therefore
		generates an \(m\)-variate copula for every \(m\ge2\)
		\cite{Joe1997,McNeilNeslehova2009}. Hence
		Assumption~\ref{it:k1} holds.

		We next verify Assumption~\ref{it:continuous-family}. Fix
		\(m\in\{2,\ldots,d\}\) and let \(\theta_n\to\theta\) in the
		relevant parameter space. The generators and their inverses depend
		continuously on the parameter, and therefore
		\(
		C_{\theta_n}^{(m)}(u)\to C_\theta^{(m)}(u)
		\)
		pointwise on \((0,1]^m\). At points with a zero coordinate both sides
		vanish. Every \(m\)-variate copula satisfies
		\(
		|C(u)-C(v)|
		\le
		\sum_{j=1}^m|u_j-v_j|,
		\)
		so the sequence is uniformly equicontinuous. Since \([0,1]^m\) is
		compact and \(C_\theta^{(m)}\) is continuous, pointwise convergence
		upgrades to uniform convergence.

		For the five families, the formulas for
		\(\tilde\tau_{\mathcal K}(\theta)=\tau(C^{(2)}_\theta)\)
		are given in Table~\ref{tab:kendall-profiles}. Derivative computations show that
		\(\tilde\tau_{\mathcal K}'(\theta)>0\) throughout the corresponding
		parameter interval. Together with the endpoint limits, this gives
		\(
		\tilde\tau_{\mathcal K}(\Theta)
		=
		(0,\tau_{\mathcal K}^\star),
		\)
		with the values of \(\tau_{\mathcal K}^\star\) stated in the table.
		Thus
		Assumption~\assref{ass:kendall-height} holds, and
		\(h_{\mathcal K}\) in \eqref{eq:kendall-height} is a strictly
		decreasing homeomorphism onto \(\mathbb R\).

		To verify Assumption~\ref{ass:bivariate-identifiability}, suppose
		\(
		\|C_{\theta_n}^{(2)}-C_\theta^{(2)}\|_\infty
		\to0.
		\)
		Let \(\mu_n\) and \(\mu\) be the probability measures induced by
		\(C_{\theta_n}^{(2)}\) and \(C_\theta^{(2)}\), respectively.
		Uniform convergence implies \(\mu_n\Rightarrow\mu\). Using
		\(
		\tau(C)=4\int_{[0,1]^2}C\,d\mu_C-1,
		\)
		we obtain
		\[
		\begin{aligned}
			\left|
			\int C_{\theta_n}^{(2)}\,d\mu_n
			-
			\int C_\theta^{(2)}\,d\mu
			\right|
			&\le
			\left\|C_{\theta_n}^{(2)}-C_\theta^{(2)}\right\|_\infty
			+
			\left|
			\int C_\theta^{(2)}\,d\mu_n
			-
			\int C_\theta^{(2)}\,d\mu
			\right|
			\to0.
		\end{aligned}
		\]
		Hence
		\(
		\tilde\tau_{\mathcal K}(\theta_n)
		\to
		\tilde\tau_{\mathcal K}(\theta).
		\)
		Since \(\tilde\tau_{\mathcal K}\) is a homeomorphism onto its image,
		\(\theta_n\to\theta\), proving
		Assumption~\ref{ass:bivariate-identifiability}.

		Finally, the standard sufficient nesting criterion for two completely
		monotone Archimedean generators
		\(\psi_{\theta_0}\) and \(\psi_{\theta_1}\) requires
		\(
		\psi_{\theta_0}^{-1}\circ\psi_{\theta_1}
		\)
		to have a completely monotone derivative. For homogeneous parent--child
		pairs from any of the five families, this criterion holds whenever
		\(
		\theta_0\le\theta_1
		\), \cite{HofertMaechler2011,McNeil2008}, giving Assumption \assref{ass:nesting}.
	\end{proof}

	Let \(\mathcal C_d^{\mathcal K}\) be the set of all \(d\)-dimensional
	copulas produced by the recursion~\eqref{eq:recursive-construction} from a
	hierarchy in \(\Hd\) and a parameter vector satisfying the nesting condition in
	\assref{ass:nesting}.	
	For \(H\in\mathfrak U_d\), define
	\[
	\theta_v(H):=
	\theta_{\mathcal K}\!\left(h_v(r(H),\eta(H))\right),
	\qquad v\in\Int(\sigma(H)).
	\]
	Since \(\eta_v>0\), the nesting condition is satisfied strictly. We define
	\[
	R_{\mathcal K}:\mathfrak U_d\to\mathcal C_d^{\mathcal K},
	\qquad
	R_{\mathcal K}(H):=C_{\sigma(H),\theta(H)}.
	\]
	Symmetry of Archimedean copulas makes this construction independent of any
	ordering of the children.
	
	\begin{proposition}
		\label{lem:equal-parameter-contraction}
		Assume~\assref{ass:K}. If a parent internal vertex and one of its
		internal children carry the same parameter \(\theta\), then contracting
		the edge between them does not change the represented copula.
	\end{proposition}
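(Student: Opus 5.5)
The argument is purely algebraic: we will show that the node functions of the two decorated hierarchies agree at the merged vertex. The subtrees below that vertex are unchanged, and the recursion \eqref{eq:recursive-construction} above it is unaffected. Let \(v\) be the parent vertex with children \(w_1,\ldots,w_m\), and suppose \(w_1\) is internal with children \(z_1,\ldots,z_k\). Assume \(\theta_v=\theta_{w_1}=\theta\). Contracting the edge \(\{v,w_1\}\) produces a vertex \(v'\) with the \(m-1+k\) children \(z_1,\ldots,z_k,w_2,\ldots,w_m\), still carrying \(\theta\). All other vertices, and their parameters and subtrees, are the same in both hierarchies. Hence, by induction up the tree, it suffices to prove that \(C_v(u)=C_{v'}(u)\) for every \(u\in[0,1]^d\). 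Any vertex above \(v\) then receives identical arguments, so the two root values coincide.

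The key step is the associativity of the Archimedean form. Writing \(x_j:=C_{z_j}(u)\) and \(y_l:=C_{w_l}(u)\), we have
\[
C_v(u)=\psi_\theta\Bigl(\psi_\theta^{-1}\bigl(\psi_\theta(\textstyle\sum_{j}\psi_\theta^{-1}(x_j))\bigr)+\sum_{l\ge2}\psi_\theta^{-1}(y_l)\Bigr).
\]
We then want \(\psi_\theta^{-1}\circ\psi_\theta=\mathrm{id}\) on the relevant range, which collapses this expression to \(\psi_\theta\bigl(\sum_j\psi_\theta^{-1}(x_j)+\sum_{l\ge2}\psi_\theta^{-1}(y_l)\bigr)=C_{\theta}^{(m-1+k)}(x,y)=C_{v'}(u)\).

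The point that needs care is this cancellation, because the composition \(\psi_\theta^{-1}\circ\psi_\theta\) may fail to be the identity for non-strict generators. For the standard convention \(\psi_\theta:[0,\infty)\to[0,1]\), continuous and nonincreasing with \(\psi_\theta(0)=1\), we use the pseudo-inverse \(\psi_\theta^{-1}(y)=\inf\{s\ge0:\psi_\theta(s)\le y\}\). Then \(\psi_\theta(\psi_\theta^{-1}(y))=y\) always holds, and \(\psi_\theta^{-1}(\psi_\theta(s))=s\) holds whenever \(\psi_\theta(s)>0\). If \(\psi_\theta(s)=0\), we instead use only that \(\psi_\theta^{-1}(0)\) is the infimum of the zero set. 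Since \(\psi_\theta\) is constant \(0\) to the right of that point, \(\psi_\theta(\psi_\theta^{-1}(\psi_\theta(s))+c)=0=\psi_\theta(s+c)\) for all \(c\ge0\). In both cases the outer application of \(\psi_\theta\) gives the same value. For the five families in Table~\ref{tab:kendall-profiles}, the generators are in fact strictly decreasing with \(\psi_\theta(\infty)=0\), so the issue arises only in the limit \(s=\infty\), where the convention \(\psi_\theta(\infty)=0\) handles it.

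Finally, we observe that the contracted decorated hierarchy is still admissible. For every remaining parent--child pair the nesting inequalities of \assref{ass:nesting} coincide with the original ones, because the new edges \(v'\to z_j\) replace \(w_1\to z_j\) with the same parameters \(\theta_{w_1}=\theta_{v'}\). By the non-planarity convention and the symmetry of \(C^{(m)}_\theta\), the ordering of the children is irrelevant. This completes the plan.
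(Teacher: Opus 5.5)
Your proposal is correct and follows the same route as the paper: both isolate the merged vertex, cancel \(\psi_\theta^{-1}\circ\psi_\theta\) inside the Archimedean sum to obtain \(C_\theta^{(m-1+k)}\) of the grandchildren and the remaining children, and note that all other subtree values, and hence the root value, are unchanged. Your discussion of pseudo-inverses and of the nesting inequalities after contraction goes beyond the paper, which simply cancels \(\psi_\theta^{-1}\circ\psi_\theta\), and it is harmless; note only that the identity \(\psi_\theta^{-1}(\psi_\theta(s))=s\) for \(\psi_\theta(s)>0\) needs \(\psi_\theta\) to be strictly decreasing on \(\{\psi_\theta>0\}\), which belongs to the standard definition of a generator and does not follow from mere monotonicity.
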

	
	\begin{proof}
		Let \(v\) be an internal vertex and \(w\) one of its internal children,
		and assume that both carry \(\theta\). Let the other children of \(v\)
		contribute \(z_1,\ldots,z_m\), and let the children of \(w\) contribute
		\(y_1,\ldots,y_n\). Then
		\[
		\begin{aligned}
			C_v
			&=
			\psi_\theta\left(
			\sum_{i=1}^m\psi_\theta^{-1}(z_i)
			+
			\psi_\theta^{-1}\left[
			\psi_\theta\left(
			\sum_{j=1}^n\psi_\theta^{-1}(y_j)
			\right)
			\right]
			\right)\\
			&=
			\psi_\theta\left(
			\sum_{i=1}^m\psi_\theta^{-1}(z_i)
			+
			\sum_{j=1}^n\psi_\theta^{-1}(y_j)
			\right)\\
			&=
			C_\theta^{(m+n)}(z_1,\ldots,z_m,y_1,\ldots,y_n).
		\end{aligned}
		\]
		All other subtree values are unchanged, so the value at the root is
		unchanged.
	\end{proof}

	The next theorem shows that the ultrametric vector is the unique reduced
	parameter of a homogeneous nested Archimedean copula. Assumptions
	\assref{ass:K}, \assref{ass:kendall-height} and \assref{ass:nesting} will be in force throughout the section.
	
	\begin{theorem}
		\label{thm:identifiability}
		\(R_{\mathcal K}:\mathfrak U_d\to\mathcal C_d^{\mathcal K}\) is a
		bijection.
	\end{theorem}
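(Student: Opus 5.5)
Surjectivity and injectivity are handled separately, and both reduce to the structure of the bivariate margins of a nested copula.

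\emph{Surjectivity.} Let \(C\in\mathcal C_d^{\mathcal K}\) be produced by some hierarchy \(\sigma'\in\Hd\) and parameters \((\theta_v)\) with \(\theta_{\parv(v)}\le\theta_v\). Whenever equality holds along a non-root internal edge, Proposition~\ref{lem:equal-parameter-contraction} lets us contract that edge without changing \(C\). Iterating this finitely many times, we obtain a reduced decorated hierarchy \((\sigma,\theta)\) with strict inequalities \(\theta_{\parv(v)}<\theta_v\). Set \(h_v:=h_{\mathcal K}(\theta_v)\). Because \(h_{\mathcal K}\) is strictly decreasing (Assumption~\assref{ass:kendall-height}), we have \(h_{\parv(v)}>h_v\). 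Take \(r=h_\rho\) and \(\eta_v=h_{\parv(v)}-h_v>0\), so that \((r,\eta)\in U^\sigma\). Then \(H=L_\sigma(r,\eta)\in\mathfrak U^\sigma\subset\mathfrak U_d\). By Theorem~\ref{thm:ultrametric-polyhedral-characterization}, \(\sigma(H)=\sigma\) and \((r(H),\eta(H))=(r,\eta)\), hence \(R_{\mathcal K}(H)=C\).

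\emph{Injectivity.} The key step is to compute the bivariate margins. For leaves \(i<j\), setting all other arguments to \(1\) in \(C_{\sigma,\theta}\) gives \(C_{ij}=C^{(2)}_{\theta_{\lca(i,j)}}\). We prove this by induction up the tree using \(\psi_\theta^{-1}(1)=0\). A subtree with all arguments equal to \(1\) returns \(1\) and so drops out of its parent's sum. A subtree containing exactly one free argument \(u\) returns \(\psi\bigl(\psi^{-1}(u)\bigr)=u\), since the remaining children vanish. At the LCA, the two children containing \(i\) and \(j\) return \(u_i\) and \(u_j\), all other children return \(1\), and the node yields \(C^{(2)}_{\theta_{\lca}}(u_i,u_j)\). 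Above the LCA, the subtree output passes through unchanged, again because the sibling contributions vanish.

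Consequently \(\tau(C_{ij})=\tilde\tau_{\mathcal K}\bigl(\theta_{\lca(i,j)}\bigr)\). By \eqref{eq:LCA-coordinate},
\[
h_{\mathcal K}\bigl(\theta_{\mathcal K}(H_{ij})\bigr)=H_{ij},
\]
that is, \(H_{ij}\) equals the logit transform \eqref{eq:kendall-height} of \(\tau(C_{ij})/\tau^\star_{\mathcal K}\). Thus \(H\) is a function of the copula \(R_{\mathcal K}(H)\) alone, and \(R_{\mathcal K}(H)=R_{\mathcal K}(H')\) forces \(H=H'\).

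Two points need care. First, well-definedness of \(R_{\mathcal K}\): the recursion must produce a copula, which follows from Assumption~\assref{ass:nesting} because the strict order holds. Second, the margin computation. That induction is the main technical step, though routine. The only subtle issue is the boundary behaviour \(\psi_\theta^{-1}(1)=0\) and \(\psi_\theta(0)=1\), which holds for generators of copulas. Note that surjectivity relies on the contraction proposition, which uses only associativity and not Assumption~\assref{ass:nesting}.
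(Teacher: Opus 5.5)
Your proposal is correct and follows the paper's proof essentially step for step. For surjectivity, you contract the equal-parameter edges via Proposition~\ref{lem:equal-parameter-contraction} and read off $(r,\eta)\in U^\sigma$ from the Kendall heights; for injectivity, you observe that the $(i,j)$ margin of $R_{\mathcal K}(H)$ is $C^{(2)}_{\theta_{\mathcal K}(H_{ij})}$, whose Kendall coefficient determines $H_{ij}$. Your explicit margin induction (via $\psi_\theta(0)=1$, $\psi_\theta^{-1}(1)=0$) and the appeal to uniqueness in Theorem~\ref{thm:ultrametric-polyhedral-characterization} only fill in details the paper leaves implicit.
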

	
	\begin{proof}
		We first prove surjectivity. Let \(C\in\mathcal C_d^{\mathcal K}\). By
		definition, \(C\) is represented by a hierarchy \(\sigma\in\Hd\) and
		parameters \((\theta_v)_{v\in\Int(\sigma)}\) satisfying the nesting
		condition. Set \(h_v:=h_{\mathcal K}(\theta_v)\) and
		\(\eta_v:=h_{\parv(v)}-h_v\ge0\).
		
		Contract every internal edge with \(\eta_v=0\). Along such an edge the
		endpoint parameters coincide, so Proposition~\ref{lem:equal-parameter-contraction}
		shows that the represented copula is unchanged. After finitely many
		contractions, we obtain a reduced hierarchy \(\sigma'\) with strictly
		positive increments \(\eta'\). Let
		\(H:=L_{\sigma'}(h_\rho,\eta')\in\mathfrak U^{\sigma'}\). By construction,
		\(C=R_{\mathcal K}(H)\), proving surjectivity.
		
		For injectivity, let \(H,H'\in\mathfrak U_d\), with reduced hierarchies
		\(\sigma,\sigma'\), and assume
		\(R_{\mathcal K}(H)=R_{\mathcal K}(H')\). Fix \(i<j\). Setting all
		variables except \(u_i,u_j\) equal to one in the recursive construction
		shows that the corresponding bivariate margins are
		\(
		C_{\theta_{\mathcal K}(H_{ij})}^{(2)}\) and \(
		C_{\theta_{\mathcal K}(H'_{ij})}^{(2)}
		\) respectively.
		Their Kendall coefficients are therefore
		\(
		\tau_{\mathcal K}^\star/(1+\exp(H_{ij}))\) and \(
		\tau_{\mathcal K}^\star/(1+\exp(H'_{ij})).
		\) Equality of the copulas implies equality of these
		coefficients, and strict monotonicity gives \(H_{ij}=H'_{ij}\).
		This holds for every pair, so \(H=H'\).
	\end{proof}
	
	\subsection{Geometry of the realized copula class}
	\label{sec:copula-geometry}
	
	For \(\sigma\in\Hd\), define
	\(
	\mathcal C_\sigma^{\mathcal K}
	:=R_{\mathcal K}(\mathfrak U^\sigma).
	\)
	Theorem~\ref{thm:identifiability} gives the disjoint decomposition
	\(\mathcal C_d^{\mathcal K}=\bigsqcup_{\sigma\in\Hd}
	\mathcal C_\sigma^{\mathcal K}\). The stratum
	\(\mathcal C_\sigma^{\mathcal K}\) consists precisely of the copulas whose
	reduced hierarchy is \(\sigma\).
	
	Let \(C\in\mathcal C_d^{\mathcal K}\), and let \(C_{ij}\) denote its
	bivariate margin on coordinates \((i,j)\). By
	Assumption~\ref{ass:bivariate-identifiability}, there is a unique
	\(\theta_{ij}(C)\in\Theta\) such that
	\begin{equation}
		\label{eq:bivariate-margins}
		C_{ij}=C_{\theta_{ij}(C)}^{(2)}.
	\end{equation}
	The height of this margin is
	\(
	h_{\mathcal K}(\theta_{ij}(C))
	=
	\log(
	(\tau_{\mathcal K}^\star-\tau(C_{ij})
	)/\tau(C_{ij})).
	\)
	We therefore define the cophenetic map by
	\begin{equation}
		\label{eq:cophenetic-map-kendall}
		\Psi_{\mathcal K}:\mathcal C_d^{\mathcal K}\to\mathbb R^{N},
		\qquad
		\Psi_{\mathcal K}(C)_{ij}
		:=
		\log
		\frac{
			\tau_{\mathcal K}^\star-\tau(C_{ij})
		}{
			\tau(C_{ij})
		}.
	\end{equation}
	For \(C,C'\in\mathcal C_d^{\mathcal K}\), set
	\(
	d_{\mathcal C}(C,C')
	:=\|\Psi_{\mathcal K}(C)-\Psi_{\mathcal K}(C')\|_2.
	\)
	
	\begin{proposition}
		\label{prop:dC-metric} 
		The map \(\Psi_{\mathcal K}\) is a bijection from
		\(\mathcal C_d^{\mathcal K}\) onto \(\mathfrak U_d\), with inverse
		\(R_{\mathcal K}\). In particular, \(d_{\mathcal C}\) is a metric and
		\(
		R_{\mathcal K}:(\mathfrak U_d,d_E)\to
		(\mathcal C_d^{\mathcal K},d_{\mathcal C})
		\)
		is an isometry and a homeomorphism.
	\end{proposition}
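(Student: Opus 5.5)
The plan is to show that \(\Psi_{\mathcal K}\circ R_{\mathcal K}=\mathrm{id}_{\mathfrak U_d}\). Theorem~\ref{thm:identifiability} already says that \(R_{\mathcal K}\) is a bijection, so this identity forces \(\Psi_{\mathcal K}=R_{\mathcal K}^{-1}\). In particular \(\Psi_{\mathcal K}\) maps \(\mathcal C_d^{\mathcal K}\) bijectively onto \(\mathfrak U_d\).

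To establish the identity, fix \(H\in\mathfrak U_d\) with reduced hierarchy \(\sigma=\sigma(H)\), and take a pair \(i<j\). I repeat the margin computation from the injectivity part of Theorem~\ref{thm:identifiability}. Setting all arguments other than \(u_i,u_j\) equal to one in the recursion~\eqref{eq:recursive-construction} collapses every subtree containing neither \(i\) nor \(j\) to the value one. It also collapses a subtree containing exactly one of them to the corresponding coordinate, by the boundary property \(C^{(m)}_\theta(\ldots,1,\ldots)\) reducing to lower dimension. At \(v=\lca_\sigma(i,j)\) one is left with \(C^{(2)}_{\theta_v}(u_i,u_j)\), and every ancestor of \(v\) acts as the identity on its single non-trivial argument. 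By~\eqref{eq:LCA-coordinate}, \(\theta_v=\theta_{\mathcal K}(H_{ij})\), so \(R_{\mathcal K}(H)_{ij}=C^{(2)}_{\theta_{\mathcal K}(H_{ij})}\). Its Kendall tau is therefore \(\tilde\tau_{\mathcal K}(\theta_{\mathcal K}(H_{ij}))=\tau^\star_{\mathcal K}/(1+e^{H_{ij}})\), using~\eqref{eq:theta-from-kendall-height}. Substituting into~\eqref{eq:cophenetic-map-kendall} gives \(\Psi_{\mathcal K}(R_{\mathcal K}(H))_{ij}=\log(e^{H_{ij}})=H_{ij}\).

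The metric and topological claims then follow formally. Because \(\Psi_{\mathcal K}\) is injective, \(d_{\mathcal C}\) is the pullback of the Euclidean metric on \(\mathfrak U_d\) under an injection. Hence \(d_{\mathcal C}\) is a metric: positivity comes from injectivity, while symmetry and the triangle inequality are inherited from \(\|\cdot\|_2\). The isometry property is \(d_{\mathcal C}(R_{\mathcal K}(H),R_{\mathcal K}(H'))=\|H-H'\|_2=d_E(H,H')\), again using \(\Psi_{\mathcal K}\circ R_{\mathcal K}=\mathrm{id}\). A surjective isometry between metric spaces is a homeomorphism, and surjectivity is supplied by Theorem~\ref{thm:identifiability}.

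The only step requiring care is the margin reduction in the second paragraph. One must check that plugging ones into a nested Archimedean composition really yields the bivariate Archimedean copula at the LCA. This holds because \(\psi_\theta^{-1}(1)=0\), so the unit arguments drop out of the sums at every node. Everything else is bookkeeping.
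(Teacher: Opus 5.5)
Your proposal is correct and follows essentially the same route as the paper: prove $\Psi_{\mathcal K}\circ R_{\mathcal K}=\mathrm{id}_{\mathfrak U_d}$ via the bivariate-margin/Kendall computation, combine this with the bijectivity of $R_{\mathcal K}$ from Theorem~\ref{thm:identifiability}, and read off the metric, isometry and homeomorphism claims. Your explicit justification of the margin reduction through $\psi_\theta^{-1}(1)=0$ only spells out what the paper leaves implicit.
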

	
	\begin{proof}
		Let \(H\in\mathfrak U_d\). The \((i,j)\)-margin of
		\(R_{\mathcal K}(H)\) is
		\(C_{\theta_{\mathcal K}(H_{ij})}^{(2)}\). Its Kendall coefficient is
		\(\tau_{\mathcal K}^\star/(1+\exp(H_{ij}))\). Therefore
		\begin{equation}
			\label{eq:Psi-composition}
			\Psi_{\mathcal K}(R_{\mathcal K}(H))=H.
		\end{equation}
		Since \(R_{\mathcal K}\) is bijective, this identity proves that
		\(\Psi_{\mathcal K}=R_{\mathcal K}^{-1}\) and has image
		\(\mathfrak U_d\). The metric and isometry statements follow directly. A bijective isometry is a homeomorphism.
	\end{proof}
	
	We call \(d_{\mathcal C}\) the cophenetic metric.
%
	 The restriction
	\(L_\sigma:U^\sigma\to\MU\sigma\) is linear and bijective with linear inverse and, thus, is a diffeomorphism. Therefore \(\MU\sigma\) is a \(K(\sigma)\)-dimensional smooth manifold embedded in \(\R^N\). We give
	\(\mathcal C_\sigma^{\mathcal K}\) the unique smooth structure for which
	\[
	\Psi_{\mathcal K}^\sigma
	:=\Psi_{\mathcal K}|_{\mathcal C_\sigma^{\mathcal K}}:
	\mathcal C_\sigma^{\mathcal K}\to\mathfrak U^\sigma
	\]
	is a diffeomorphism.
	
	\begin{theorem}
		\label{thm:stratified-isomorphism}
		The copula class \(\mathcal C_d^{\mathcal K}\) is stratified by the smooth
		manifolds \(\mathcal C_\sigma^{\mathcal K}\), \(\sigma\in\Hd\), with
		\(\dim\mathcal C_\sigma^{\mathcal K}=K(\sigma)\). The realization map \(R_{\cl K}\)
		maps \(\mathfrak U^\sigma\) diffeomorphically onto
		\(\mathcal C_\sigma^{\mathcal K}\), and
		\[
		\mathcal C_\gamma^{\mathcal K}
		\subset
		\overline{\mathcal C_\sigma^{\mathcal K}}
		\setminus\mathcal C_\sigma^{\mathcal K}
		\quad\Longleftrightarrow\quad
		\gamma\prec\sigma.
		\]
	\end{theorem}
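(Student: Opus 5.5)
The plan is to transport everything from \(\mathfrak U_d\) to \(\mathcal C_d^{\mathcal K}\) through the isometry \(R_{\mathcal K}\) of Proposition~\ref{prop:dC-metric}, so that no new copula-level analysis is needed. Throughout, \(\mathcal C_d^{\mathcal K}\) carries the topology induced by \(d_{\mathcal C}\), and closures are taken in that topology.

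\emph{Partition and dimensions.} By Theorem~\ref{thm:ultrametric-polyhedral-characterization}, \(\mathfrak U_d=\bigsqcup_{\sigma}\mathfrak U^\sigma\). Since \(R_{\mathcal K}\) is a bijection (Theorem~\ref{thm:identifiability}), the sets \(\mathcal C_\sigma^{\mathcal K}=R_{\mathcal K}(\mathfrak U^\sigma)\) partition \(\mathcal C_d^{\mathcal K}\). By Corollary~\ref{cor:closed-ultrametric-image}\ref{it:cor1-it2}, each \(\mathfrak U^\sigma\) is an embedded manifold of dimension \(K(\sigma)\). The smooth structure on \(\mathcal C_\sigma^{\mathcal K}\) was defined so that \(\Psi^\sigma_{\mathcal K}\) is a diffeomorphism. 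Its inverse is \(R_{\mathcal K}|_{\mathfrak U^\sigma}\), by Proposition~\ref{prop:dC-metric}. Hence \(R_{\mathcal K}\) maps \(\mathfrak U^\sigma\) diffeomorphically onto \(\mathcal C_\sigma^{\mathcal K}\), and \(\dim\mathcal C_\sigma^{\mathcal K}=K(\sigma)\). This part is essentially tautological. I would also note that the manifold topology on \(\mathcal C_\sigma^{\mathcal K}\) coincides with the subspace topology from \(d_{\mathcal C}\), because \(\mathfrak U^\sigma\) carries the subspace topology of \(\mathbb R^N\) and \(R_{\mathcal K}\) is an isometry.

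\emph{Frontier relation.} A homeomorphism commutes with closures and with set differences. Applying this to \(R_{\mathcal K}\) gives
\[
\overline{\mathcal C_\sigma^{\mathcal K}}\setminus\mathcal C_\sigma^{\mathcal K}
=R_{\mathcal K}\bigl(\overline{\mathfrak U^\sigma}\setminus\mathfrak U^\sigma\bigr).
\]
Since \(R_{\mathcal K}\) is injective, \(\mathcal C_\gamma^{\mathcal K}\subset R_{\mathcal K}(S)\) is equivalent to \(\mathfrak U^\gamma\subset S\) for any \(S\subset\mathfrak U_d\). The equivalence with \(\gamma\prec\sigma\) then follows from Corollary~\ref{cor:closed-ultrametric-image}\ref{it:cor1-it3}.

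\emph{The one substantive point.} Closures in \(\mathfrak U_d\) and in \(\mathbb R^N\) agree, because \(\mathfrak U_d\) is closed by Theorem~\ref{thm:ultrametric-polyhedral-characterization}(iv). So Corollary~\ref{cor:closed-ultrametric-image}\ref{it:cor1-it3} applies verbatim. Behind that corollary is the fact that \(\overline{\mathfrak U^\sigma}=\MovU{\sigma}=\bigsqcup_{\gamma\preceq\sigma}\mathfrak U^\gamma\). This holds because \(\mathfrak U^\sigma\) is the relative interior of the closed convex cone \(\MovU{\sigma}\) (Proposition~\ref{prop:linear-hierarchy-cell}) combined with part (i) of the theorem.

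For the stratification to be a genuine stratification, the strata should satisfy the frontier condition. This is exactly the displayed equivalence: whenever a stratum meets the frontier of another, it is contained in it. It follows because the frontier is a union of whole strata \(\mathfrak U^\gamma\), \(\gamma\prec\sigma\). Finiteness of \(\Hd\) gives local finiteness. I expect no real obstacle; the main care is in stating which topology is used.
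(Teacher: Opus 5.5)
Your proposal is correct and takes the same route as the paper. The diffeomorphism and dimension claims come from the definition of the smooth structure on \(\mathcal C_\sigma^{\mathcal K}\) together with Proposition~\ref{prop:dC-metric} and Corollary~\ref{cor:closed-ultrametric-image}, and the frontier relation is transported through the homeomorphism \(R_{\mathcal K}\). Your remarks on the topology and on why \(\overline{\mathfrak U^\sigma}=\MovU{\sigma}\) spell out what the paper's two-line proof leaves implicit.
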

	
	\begin{proof}
		The diffeomorphism statement follows from the definition of the smooth
		structure and Proposition~\ref{prop:dC-metric}. The dimension and frontier
		relation are transported through the homeomorphism \(R_{\mathcal K}\) from
		Corollary~\ref{cor:closed-ultrametric-image}.
	\end{proof}
	
	\begin{proposition}
		\label{prop:uniform-topology-equivalence}
		For every sequence
		\((C_n)_{n\ge1}\subset\mathcal C_d^{\mathcal K}\) and every
		\(C\in\mathcal C_d^{\mathcal K}\),
		\begin{equation}
			\label{eq:metric-equivalence}
			d_{\mathcal C}(C_n,C)\to0
			\quad\Longleftrightarrow\quad
			\|C_n-C\|_\infty\to0.
		\end{equation}
		Consequently, the cophenetic metric induces exactly the topology inherited
		from \(C([0,1]^d)\) through the uniform norm.
	\end{proposition}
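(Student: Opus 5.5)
We prove the two implications separately. In both directions we work through the pairwise margins and the cophenetic coordinates of Proposition~\ref{prop:dC-metric}.

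\emph{Direction ($\Leftarrow$).} Assume \(\|C_n-C\|_\infty\to0\). Fix \(i<j\) and set all arguments except \(u_i,u_j\) equal to one. This shows \(\|(C_n)_{ij}-C_{ij}\|_\infty\le\|C_n-C\|_\infty\to0\). By \eqref{eq:bivariate-margins}, \((C_n)_{ij}=C^{(2)}_{\theta_{ij}(C_n)}\) and \(C_{ij}=C^{(2)}_{\theta_{ij}(C)}\). Assumption~\ref{ass:bivariate-identifiability} then gives \(\theta_{ij}(C_n)\to\theta_{ij}(C)\). Since \(h_{\mathcal K}\) is a homeomorphism by Assumption~\assref{ass:kendall-height}, we get \(\Psi_{\mathcal K}(C_n)_{ij}=h_{\mathcal K}(\theta_{ij}(C_n))\to\Psi_{\mathcal K}(C)_{ij}\). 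Alternatively, one can pass through Kendall's tau via the weak-convergence argument in the proof of Proposition~\ref{lem:admissible-families}. Since there are finitely many pairs, \(d_{\mathcal C}(C_n,C)\to0\).

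\emph{Direction ($\Rightarrow$).} Assume \(H_n:=\Psi_{\mathcal K}(C_n)\to H:=\Psi_{\mathcal K}(C)\) in \(\R^N\). The hierarchy \(\sigma(H_n)\) may vary with \(n\), so we argue by subsequences. It suffices to show that every subsequence has a further subsequence along which \(\|C_n-C\|_\infty\to0\). There are finitely many binary hierarchies, and by Theorem~\ref{thm:ultrametric-polyhedral-characterization}(iv) every \(H_n\) lies in some maximal cone \(\MovU{\tau}\), \(\tau\in\Bd\). Hence we may pass to a subsequence with \(H_n\in\MovU{\tau}\) for a fixed binary \(\tau\). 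Closedness of the cone gives \(H\in\MovU{\tau}\). Write \(H_n=L_\tau(r_n,\eta_n)\) and \(H=L_\tau(r,\eta)\) with \((r_n,\eta_n),(r,\eta)\in\ovU{\tau}\). Since \(L_\tau^{-1}\) is linear, hence continuous, on \(V_\tau\) (Proposition~\ref{prop:linear-hierarchy-cell}), the node heights converge: \(h_v(r_n,\eta_n)\to h_v(r,\eta)\) for all \(v\in\Int(\tau)\).

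Set \(\theta^n_v:=\theta_{\mathcal K}(h_v(r_n,\eta_n))\), so that \(\theta^n_v\to\theta_v:=\theta_{\mathcal K}(h_v(r,\eta))\). Both parameter vectors satisfy the nesting condition of Assumption~\assref{ass:nesting}, possibly non-strictly. Proposition~\ref{lem:equal-parameter-contraction}, applied as in the surjectivity part of Theorem~\ref{thm:identifiability}, shows \(C_n=C_{\tau,\theta^n}\) and \(C=C_{\tau,\theta}\): contracting the zero increments recovers the reduced representations. The problem is therefore reduced to continuity of the recursion \eqref{eq:recursive-construction} in the parameters for a fixed binary tree.

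This continuity is proved by induction over the tree from the leaves to the root, showing \(\|C^n_v-C_v\|_\infty\to0\) for each node function. For a node \(v\) with children \(w_1,w_2\),
\[
|C^{(2)}_{\theta^n_v}(C^n_{w_1},C^n_{w_2})-C^{(2)}_{\theta_v}(C_{w_1},C_{w_2})|
\le\|C^{(2)}_{\theta^n_v}-C^{(2)}_{\theta_v}\|_\infty+\sum_{k=1}^2\|C^n_{w_k}-C_{w_k}\|_\infty .
\]
The first term tends to zero by Assumption~\ref{it:continuous-family}. The second uses that every bivariate copula is \(1\)-Lipschitz in each argument, and the child functions take values in \([0,1]\). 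Since every subsequence has such a further subsequence, the whole sequence converges uniformly. This proves \eqref{eq:metric-equivalence}.

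The main obstacle is this direction ($\Rightarrow$) across strata: \(C\) can be multifurcating while the \(C_n\) have different hierarchies. The fixed-binary-cone subsequence device is what handles this, together with the observation that representations with zero increments still describe the same copula. Finally, both topologies on \(\mathcal C_d^{\mathcal K}\) are metrizable, so agreement of convergent sequences gives equality of the topologies.
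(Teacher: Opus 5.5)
Your proof is correct and follows the paper's argument. You pass to a subsequence lying in one fixed closed cone, use continuity of the linear inverse to get convergence of the node parameters, run the leaf-to-root induction with the Lipschitz bound and Assumption~\ref{it:continuous-family}, identify the copulas via Proposition~\ref{lem:equal-parameter-contraction}, and handle the converse through bivariate margins and Assumption~\ref{ass:bivariate-identifiability}.

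The only difference is that you fix a binary maximal cone $\overline{\mathfrak U}^{\,\tau}$ rather than the reduced stratum $\mathfrak U^{\sigma}$ of the $H_n$, so you apply the contraction to the $C_n$ as well as to $C$. This is equally valid, and has the minor side benefit of needing only the bivariate case $m=2$ of Assumption~\ref{it:continuous-family}.
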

	
	\begin{proof}
		Suppose first that \(d_{\mathcal C}(C_n,C)\to0\), and set
		\(H_n:=\Psi_{\mathcal K}(C_n)\) and
		\(H:=\Psi_{\mathcal K}(C)\). Then \(H_n\to H\) in
		\(\mathbb R^{N}\). We show that
		\(R_{\mathcal K}(H_n)\to R_{\mathcal K}(H)\) uniformly.
		
		It is enough to prove this along an arbitrary subsequence. Since \(\Hd\)
		is finite, after passing to a further subsequence we may assume that
		\(H_n\in\mathfrak U^\sigma\) for one fixed hierarchy \(\sigma\). Write
		\(H_n=L_\sigma(x_n)\), with \(x_n\in U^\sigma\). Since
		\(H\in\MovU{\sigma}\), there is a unique
		\(x\in\ovU{\sigma}\) such that \(H=L_\sigma(x)\), and
		continuity of the linear inverse gives \(x_n\to x\).
		For every internal vertex \(v\), the corresponding parameters satisfy
		\(
		\theta_v(x_n)=\theta_{\mathcal K}(h_v(x_n))
		\to
		\theta_{\mathcal K}(h_v(x))=\theta_v(x).
		\)
		
		For a vertex \(v\) of \(\sigma\), let \(C_{v,n}\) and \(C_v\) denote the
		subtree copulas represented by \((\sigma,x_n)\) and \((\sigma,x)\),
		respectively. We prove by induction from the leaves to the root that
		\(
		\|C_{v,n}-C_v\|_\infty \to 0 .
		\)
		
		If \(v=i\) is a leaf, then by definition
		\(
		C_{i,n}(u)=u_i=C_i(u),
		\)
		and therefore \(\|C_{i,n}-C_i\|_\infty=0\).
		Now let \(v\) be an internal vertex with children
		\(w_1,\ldots,w_m\), and suppose that
		\(
		\|C_{w_j,n}-C_{w_j}\|_\infty\to 0,
		\) for \(j=1,\ldots,m.\)
		Denote \(\theta_v:=\theta_v(x)\) and \(\theta_{v,n}:=\theta_v(x_n)\). Then
		\[
		\begin{aligned}
			|C_{v,n}(u)-C_v(u)|
			&\le
			\left|
			C^{(m)}_{\theta_{v,n}}\left(C_{w_1,n}(u),\ldots,C_{w_m,n}(u)\right)
			-
			C^{(m)}_{\theta_v}(C_{w_1,n}(u),\ldots,C_{w_m,n}(u))
			\right|
			\\
			&\quad+
			\left|
			C^{(m)}_{\theta_v}(C_{w_1,n}(u),\ldots,C_{w_m,n}(u))
			-
			C^{(m)}_{\theta_v}(C_{w_1}(u),\ldots,C_{w_m}(u))
			\right|.
		\end{aligned}
		\]
		The Lipschitz estimate
		\(
		\left|C(u)-C(v)\right|
		\leq
		\sum_{j=1}^m |u_j-v_j|
		\) gives
		\(
		\|C_{v,n}-C_v\|_\infty
		\le\|
		C^{(m)}_{\theta_{v,n}}
		-
		C^{(m)}_{\theta_v}\|_\infty
		+
		\sum_{j=1}^m
		\|C_{w_j,n}-C_{w_j}\|_\infty .
		\)
		The first term tends to zero by
		Assumption~\ref{it:continuous-family}, while the remaining terms tend
		to zero by the induction hypothesis. Hence
		\(
		\|C_{v,n}-C_v\|_\infty\to 0.
		\)
		Since \(\sigma\) has finitely many vertices, iterating this argument
		from the leaves to the root \(\rho\) yields
		\(
		\|C_{\rho,n}-C_\rho\|_\infty\to 0.
		\)
		Thus the copulas represented by \((\sigma,x_n)\) converge uniformly to
		the copula represented by \((\sigma,x)\). If an increment of \(x\)
		vanishes, its endpoint parameters coincide; repeated use of
		Proposition~\ref{lem:equal-parameter-contraction} shows that the limiting
		copula corresponds to \(R_{\mathcal K}(H)\). Thus every subsequence has a
		further subsequence converging uniformly to \(C\), and hence
		\(\|C_n-C\|_\infty\to0\).
		
		Conversely, suppose that \(\|C_n-C\|_\infty\to0\). Let \(C_{n,ij}\) and
		\(C_{ij}\) be the corresponding bivariate margins. Then
		\(
		\|C_{n,ij}-C_{ij}\|_\infty
		\le\|C_n-C\|_\infty\to0.
		\)
		By~\eqref{eq:bivariate-margins} and
		Assumption~\ref{ass:bivariate-identifiability},
		\(\theta_{n,ij}\to\theta_{ij}\). Continuity of
		\(h_{\mathcal K}\) gives
		\(\Psi_{\mathcal K}(C_n)_{ij}\to\Psi_{\mathcal K}(C)_{ij}\) for every
		pair. Since there are finitely many pairs,
		\(d_{\mathcal C}(C_n,C)\to0\).
	\end{proof}
	
\subsection{Pairwise copula functionals as coordinates}
\label{subsec:pairwise-functionals}

We now relate the cophenetic geometry of
\(\cl C_d^{\cl K}\) to bivariate copula functionals. The key
observation is that the collection of pairwise functional values is
obtained by applying a single scalar profile componentwise to the
cophenetic vector.

Let \(\mathfrak C_2\) denote the set of bivariate copulas, and let
\(
T:\mathfrak C_2\to\mathbb R
\)
be a bivariate copula functional. For \(C\in\cl C_d^{\cl K}\), denote
by \(C_{ij}\) its bivariate margin on the coordinates \((i,j)\), and
define
\[
T_{ij}(C):=T(C_{ij}),
\qquad
\mathsf T(C)
:=
\bigl(T_{ij}(C)\bigr)_{1\le i<j\le d}
\in\mathbb R^N.
\]
The profile of \(T\) along the bivariate family \(\cl K\) is the map
\[
q_T:\mathbb R\longrightarrow\mathbb R,
\qquad
h \mapsto
T\left(C^{(2)}_{\theta_{\mathcal K}(h)}\right).
\]
We define the map
\(
Q_T:\mathbb R^N\to\mathbb R^N,
\ 
Q_T(H)_{ij}:=q_T(H_{ij}).
\)
If \(q_T\) is injective, we denote its coordinatewise inverse by
\(
Q_T^{-1}:q_T(\mathbb R)^N\to\mathbb R^N,
\ 
Q_T^{-1}(z)_{ij}:=q_T^{-1}(z_{ij}).
\)

\begin{theorem}
	\label{thm:informative-functionals}
	For every bivariate copula functional \(T\),
	\begin{equation}
		\label{eq:T-comp}
		\mathsf T
		=
		Q_T\circ\Psi_{\cl K}.
	\end{equation}
	The following consequences hold.
	\begin{enumerate}[label=\textup{(\roman*)}]
		\item If \(q_T\) is continuous, then
		\(
		\mathsf T:
		(\cl C_d^{\cl K},d_{\cl C})
		\to
		\R^N
		\)
		is continuous.
		
		\item If \(q_T\) is injective, then \(\mathsf T\) is injective.
		
		\item If \(q_T\) is continuous and injective, then
		\(
		\mathsf T:
		\cl C_d^{\cl K}
		\to
		\mathsf T(\cl C_d^{\cl K})
		\)
		is a topological embedding. Equivalently, for every
		\((C_n)_{n\ge1}\subset\cl C_d^{\cl K}\) and
		\(C\in\cl C_d^{\cl K}\),
		\[
		d_{\cl C}(C_n,C)\longrightarrow0
		\quad\Longleftrightarrow\quad
		\|\mathsf T(C_n)-\mathsf T(C)\|_2\longrightarrow0.
		\]
		
		\item If \(q_T:\mathbb R\to q_T(\mathbb R)\) is locally
		bi-Lipschitz, then for every compact set
		\(K\subset\mathfrak U_d\) there exist constants
		\(0<c_K\le C_K<\infty\) such that
		\[
		c_Kd_{\cl C}(C,C')
		\le
		\|\mathsf T(C)-\mathsf T(C')\|_2
		\le
		C_Kd_{\cl C}(C,C')
		\]
		whenever
		\(\Psi_{\cl K}(C),\Psi_{\cl K}(C')\in K\).
	\end{enumerate}
\end{theorem}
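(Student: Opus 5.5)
The identity \eqref{eq:T-comp} comes first, and it rests on the margin computation already used in Theorem~\ref{thm:identifiability} and Proposition~\ref{prop:dC-metric}. Take \(C\in\cl C_d^{\cl K}\) and set \(H:=\Psi_{\cl K}(C)\in\mathfrak U_d\). By Proposition~\ref{prop:dC-metric}, \(C=R_{\cl K}(H)\). The \((i,j)\)-margin of \(R_{\cl K}(H)\) is obtained by setting all other arguments equal to one in the recursion~\eqref{eq:recursive-construction}, which gives \(C^{(2)}_{\theta_{\cl K}(H_{ij})}\). Hence
\[
T_{ij}(C)=T\bigl(C^{(2)}_{\theta_{\cl K}(H_{ij})}\bigr)=q_T(H_{ij})=Q_T(H)_{ij}.
\]
This proves \(\mathsf T=Q_T\circ\Psi_{\cl K}\). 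From here, each item only needs to transport properties of \(q_T\) through the isometry \(\Psi_{\cl K}:(\cl C_d^{\cl K},d_{\cl C})\to(\mathfrak U_d,d_E)\).

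For (i), \(Q_T\) is continuous on \(\R^N\) because it acts coordinatewise by a continuous map, and \(\Psi_{\cl K}\) is an isometry; the composition is therefore continuous. For (ii), \(Q_T\) is injective when \(q_T\) is, and \(\Psi_{\cl K}\) is a bijection onto \(\mathfrak U_d\), so \(\mathsf T\) is injective.

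For (iii), the forward implication follows from (i). For the converse, suppose \(\mathsf T(C_n)\to\mathsf T(C)\), so that \(q_T(H_{n,ij})\to q_T(H_{ij})\) for every pair. The step needing care is deducing \(H_{n,ij}\to H_{ij}\). Here I use the fact that a continuous injective map \(q_T:\R\to\R\) is strictly monotone, so its image \(q_T(\R)\) is an open interval and \(q_T^{-1}\) is continuous on that interval. Because \(q_T(H_{ij})\) lies in the open image and \(q_T(H_{n,ij})\) converges to it, continuity of \(q_T^{-1}\) gives \(H_{n,ij}\to H_{ij}\). Hence \(d_{\cl C}(C_n,C)=\|H_n-H\|_2\to0\). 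The equivalence of sequential convergence, combined with injectivity, shows that \(\mathsf T\) is a homeomorphism onto its image, since these are metric spaces.

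For (iv), let \(K\subset\mathfrak U_d\) be compact. The set of coordinates \(\{H_{ij}:H\in K\}\) lies in a compact interval \(J\subset\R\). I interpret local bi-Lipschitz as follows: each point has a neighbourhood on which
\[
c|s-t|\le|q_T(s)-q_T(t)|\le C'|s-t|.
\]
I then upgrade this to a uniform bi-Lipschitz bound on \(J\).
\begin{itemize}
\item The upper bound follows by covering \(J\) with finitely many such neighbourhoods and chaining along segments.
\item For the lower bound, the function \((s,t)\mapsto|q_T(s)-q_T(t)|/|s-t|\) on \(J\times J\) off the diagonal is bounded below near the diagonal by the local constants, using a Lebesgue number of the cover. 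Away from the diagonal it is bounded below by a positive minimum, by continuity, injectivity and compactness.
\end{itemize}
This gives constants \(0<c_J\le C_J\) such that \(c_J|s-t|\le|q_T(s)-q_T(t)|\le C_J|s-t|\) for \(s,t\in J\). Applying this coordinatewise and summing squares yields
\[
c_J\|H-H'\|_2\le\|Q_T(H)-Q_T(H')\|_2\le C_J\|H-H'\|_2 .
\]
Since \(\|H-H'\|_2=d_{\cl C}(C,C')\), this is the claim with \(c_K:=c_J\) and \(C_K:=C_J\). The main obstacle is this passage from local to uniform constants on \(J\), in particular the lower bound away from the diagonal, which uses that continuous injective maps are strictly monotone.
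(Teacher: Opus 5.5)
Your proposal is correct and follows the paper's proof. The identity comes from the bivariate margins \(C^{(2)}_{\theta_{\cl K}(H_{ij})}\); items (i)--(iii) are transported through the isometry \(\Psi_{\cl K}\), using that a continuous injective \(q_T\) is strictly monotone and hence a homeomorphism onto its image; and (iv) reduces to uniform bi-Lipschitz bounds for \(q_T\) on a compact coordinate interval, followed by summing squares. Your covering and Lebesgue-number argument spells out the local-to-uniform step that the paper asserts in one line. The global injectivity you use in (iv) is not stated as a hypothesis there, but it follows from local bi-Lipschitz continuity on the connected line \(\R\): \(q_T\) is locally strictly monotone, and the direction of monotonicity must be constant.
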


\begin{proof}
	For \(C\in\cl C_d^{\cl K}\), its \((i,j)\)-margin is
	\(
	C_{ij}
	=
	C^{(2)}_{\theta_{\mathcal K}(\Psi_{\cl K}(C)_{ij})}.
	\)
	Therefore,
	\(
		T_{ij}(C)
		=
		q_T(\Psi_{\cl K}(C)_{ij}),
	\)
	which proves \eqref{eq:T-comp}.
	
%
	
	If \(q_T:\R \to q_T(\R)\) is both continuous and injective, it is strictly monotone and hence a
	homeomorphism. Now, (i), (ii), and (iii) follow from \eqref{eq:T-comp} and the fact that \(\Psi_{\cl K}:
	(\cl C_d^{\cl K},d_{\cl C})
	\to
	\mathfrak U_d\) is a bijective isometry and a homeomorphism.
	
	Finally, assume that \(q_T\) is locally bi-Lipschitz and let
	\(K\subset\mathfrak U_d\) be compact. The interval
	\[
	J_K
	:= \bigl[\min_{H \in K, 1\le i < j \le d} H_{ij},\max_{H \in K, 1\le i < j \le d} H_{ij}\bigr]
	\]
	is compact. By local bi-Lipschitz continuity there
	exist constants \(0<c_K\le C_K<\infty\) such that
	\(
	c_K|h-h'|
	\le
	|q_T(h)-q_T(h')|
	\le
	C_K|h-h'|,\) for \(
	h,h'\in J_K.
	\)
	Thus, whenever
	\(\Psi_{\cl K}(C),\Psi_{\cl K}(C')\in K\), one has
	\(
		c_K^2d_{\cl C}(C,C')^2
		\le
		\|\mathsf T(C)-\mathsf T(C')\|_2^2\le
		C_K^2d_{\cl C}(C,C')^2.
	\)
	Taking square roots proves \textup{(iv)}.
\end{proof}

Theorem~\ref{thm:informative-functionals} separates the geometric and statistical ingredients
of the construction.  The hierarchy geometry is carried entirely by the
cophenetic vector \(H\), whereas the choice of pairwise functional enters
only through the scalar transformation \(q_T\).  In particular, any
informative bivariate functional yields the same reduced hierarchy
coordinates after componentwise inversion.  Thus the reconstruction
problem is not intrinsically tied to Kendall's tau; Kendall's tau is one
particularly convenient realization of a more general coordinate
principle.

We observe that, if $q_T$ is injective, the cophenetic vector is recovered componentwise through
\(
	\Psi_{\cl K}(C)
	=
	Q_T^{-1}(\mathsf T(C))
	=
	\bigl(q_T^{-1}(T_{ij}(C))\bigr)_{i<j}.
\)
Consequently,
\(
C
=
R_{\cl K}\!\left(Q_T^{-1}(\mathsf T(C))\right),
\)
so the pairwise \(T\)-values determine the copula, its reduced
hierarchy, and all its node parameters.

\FloatBarrier


We now verify the hypotheses of Theorem~\ref{thm:informative-functionals}
for Spearman's rho, and the lower- and upper-tail
dependence coefficients in the admissible families. 
By construction, Kendall's tau is the reference coordinate functional. In particular, by \eqref{eq:theta-from-kendall-height},
\begin{equation}
	\label{eq:universal-kendall-profile}
	q_\tau(h)
	=
	\tau\!\left(C^{(2)}_{\theta_{\mathcal K}(h)}\right)
	=
	\frac{\tau_{\mathcal K}^\star}{1+e^h},
	\qquad h\in\mathbb R,
\end{equation}
which readily satisfies the assumptions of Theorem~\ref{thm:informative-functionals}(iii)-(iv).



The tail-dependence profiles in the new coordinates are summarized in
Table~\ref{tab:tail-profiles}.
\begin{table}[pos=htbp]
	\centering
	\small
	\setlength{\tabcolsep}{4pt}
	\renewcommand{\arraystretch}{1.15}
	\begin{tabular}{@{}lccc@{}}
		\toprule
		Family \(\cl K\)
		&
		\(q_L(h)\)
		&
		\(q_U(h)\)
		&
		Informative tail profile
		\\
		\midrule
		Clayton
		&
		\(\displaystyle 2^{-e^h/2}\)
		&
		\(0\)
		&
		lower
		\\
		
		Frank
		&
		\(0\)
		&
		\(0\)
		&
		none
		\\
		
		AMH
		&
		\(0\)
		&
		\(0\)
		&
		none
		\\
		
		Gumbel
		&
		\(0\)
		&
		\(\displaystyle 2-2^{1/(1+e^{-h})}\)
		&
		upper
		\\
		
		Joe
		&
		\(0\)
		&
		\(\displaystyle 2-2^{1/\theta_{\mathrm{Jo}}(h)}\)
		&
		upper
		\\
		\bottomrule
	\end{tabular}
	\caption{Lower- and upper-tail dependence profiles in the
		Kendall-logit cophenetic coordinate.}
	\label{tab:tail-profiles}
\end{table}
The nonconstant tail profiles in Table~\ref{tab:tail-profiles} are
smooth, strictly decreasing, and locally bi-Lipschitz. The remaining
tail profiles are identically zero and therefore non-informative.
Combining these calculations with
Theorem~\ref{thm:informative-functionals} gives the following corollary.

\begin{corollary}
	\label{cor:classical-functionals}
	For every admissible family \(\mathcal K\), the Kendall profile
	\(q_\tau\) is smooth, strictly decreasing and locally
	bi-Lipschitz on \(\mathbb R\). Hence the conclusions of
	Theorem~\ref{thm:informative-functionals}\textup{(iii)--(iv)}
	apply.
	
	The Spearman profile \(q_{\rho_S}\) is continuous and strictly
	decreasing on \(\mathbb R\). Hence
	Theorem~\ref{thm:informative-functionals}\textup{(iii)} applies:
	the matrix of pairwise Spearman coefficients determines the
	copula, its reduced hierarchy and its node parameters.
	
	The same conclusions as in
	Theorem~\ref{thm:informative-functionals}\textup{(iii)--(iv)}
	hold for lower-tail dependence in the Clayton family and for
	upper-tail dependence in the Gumbel and Joe families.
	All remaining tail profiles in
	Table~\ref{tab:tail-profiles} are identically zero.
\end{corollary}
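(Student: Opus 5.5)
The plan is to verify the hypotheses of Theorem~\ref{thm:informative-functionals} separately for each profile and then invoke parts (iii)--(iv).

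\emph{Kendall.} By \eqref{eq:universal-kendall-profile}, \(q_\tau(h)=\tau_{\mathcal K}^\star/(1+e^h)\) for every admissible family. This function is real analytic and has derivative \(-\tau_{\mathcal K}^\star e^h/(1+e^h)^2<0\), so it is strictly decreasing. Its derivative is continuous and never vanishes, hence bounded above and away from zero on compact intervals. By the mean value theorem, \(q_\tau\) is therefore locally bi-Lipschitz onto its image \((0,\tau_{\mathcal K}^\star)\), and Theorem~\ref{thm:informative-functionals}(iii)--(iv) apply directly.

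\emph{Tail coefficients.} We treat the tail profiles the same way, using Table~\ref{tab:tail-profiles}.
\begin{itemize}
\item Clayton: \(\lambda_L=2^{-1/\theta}\) with \(\theta=2/(1+e^h)^{-1}\!-2\), that is \(1/\theta=e^h/2\). This gives \(q_L(h)=2^{-e^h/2}\), whose derivative \(-(\log 2/2)e^h2^{-e^h/2}\) is strictly negative.
\item Gumbel: \(\lambda_U=2-2^{1/\theta}\) with \(1/\theta=1-\tilde\tau=1-1/(1+e^h)\). This gives \(q_U(h)=2-2^{1/(1+e^{-h})}\), which is smooth with nonvanishing derivative.
\item Joe: \(q_U(h)=2-2^{1/\theta_{\mathrm{Jo}}(h)}\) with \(\theta_{\mathrm{Jo}}=\theta_{\mathcal K}(h)\). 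The Joe Kendall series has a strictly positive derivative in \(\theta\) (Proposition~\ref{lem:admissible-families}), so by the inverse function theorem \(\theta_{\mathcal K}\) is \(C^1\) with \(\theta_{\mathcal K}'<0\). The map \(\theta\mapsto 2-2^{1/\theta}\) has strictly positive derivative. By the chain rule, \(q_U'(h)<0\).
\end{itemize}
In each of these three cases the same mean value argument gives local bi-Lipschitzness. For the other family/tail pairs the standard tail coefficients vanish identically: Clayton's upper tail, Gumbel's and Joe's lower tails, and both tails of Frank and AMH. These profiles are therefore constant zero.

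\emph{Spearman.} Write \(q_{\rho_S}(h)=\rho_S(C^{(2)}_{\theta_{\mathcal K}(h)})\). Since \(\theta_{\mathcal K}\) is a strictly decreasing homeomorphism, it suffices to show that \(\theta\mapsto\rho_S(C^{(2)}_\theta)\) is continuous and strictly increasing on \(\Theta\).
\begin{itemize}
\item Continuity follows as in the proof of Proposition~\ref{lem:admissible-families}. Assumption~\ref{it:continuous-family} gives uniform convergence of copulas, and \(\rho_S(C)=12\int C\,du\,dv-3\) is continuous in the uniform norm.
\item For strict monotonicity, I would use that each of the five bivariate families is strictly increasing in the concordance (pointwise) order: \(\theta<\theta'\) implies \(C_\theta\le C_{\theta'}\) pointwise, with strict inequality somewhere. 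This is classical; see \citet{Joe1997}. It can also be seen from the nesting criterion: \(\psi_{\theta}^{-1}\circ\psi_{\theta'}\) has completely monotone derivative, hence is subadditive, which yields \(C_\theta\le C_{\theta'}\). Since both copulas are continuous and \(C_\theta\neq C_{\theta'}\) (Kendall's tau differs by Assumption~\assref{ass:kendall-height}), the inequality is strict on an open set. Integrating then gives a strict increase of \(\rho_S\).
\end{itemize}
Hence \(q_{\rho_S}\) is continuous and injective, and Theorem~\ref{thm:informative-functionals}(iii) applies.

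The main obstacle is the strict concordance ordering used for Spearman. The subadditivity step (a concave function vanishing at \(0\) is subadditive) must be checked with care, and so must strictness. No bi-Lipschitz claim is needed there, which avoids differentiating \(\rho_S\) in closed form.
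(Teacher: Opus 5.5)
Your proposal is correct and follows the paper's route. It uses the explicit logistic form of \(q_\tau\), substitutes \(\theta=\theta_{\mathcal K}(h)\) into the standard tail formulas and checks nonvanishing derivatives, and for Spearman combines the concordance ordering (subadditivity of the concave map \(\psi_\theta^{-1}\circ\psi_{\theta'}\), which vanishes at \(0\)), strictness from \(C^{(2)}_\theta\neq C^{(2)}_{\theta'}\), and continuity from Assumption~\ref{it:continuous-family}. There are two small, valid variations: you get concavity from the completely monotone derivative in the nesting criterion rather than from the family-by-family computations of Appendix~\ref{app:concavity}, and you get non-identity from Assumption~\assref{ass:kendall-height} rather than Assumption~\ref{ass:bivariate-identifiability}.

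One harmless slip: for Clayton \(\theta=2e^{-h}\), whereas your intermediate expression \(2/(1+e^h)^{-1}-2\) equals \(2e^{h}\). Your conclusion \(1/\theta=e^h/2\), and hence \(q_L(h)=2^{-e^h/2}\), is still correct.
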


\begin{proof}
	The Kendall assertion follows immediately from \eqref{eq:universal-kendall-profile}.
	
	We next consider Spearman's rho. The five families considered here
	are known to be positively ordered by their parameter in the
	concordance order, that is, if \(\theta_1\le\theta_2\), then
	\(
	C^{(2)}_{\theta_1}(u,v)
	\le
	C^{(2)}_{\theta_2}(u,v).
	\) For the corresponding family-specific
	statements, see
	\cite[Exercise~2.32, Examples~4.12 and~4.14,
	Table~4.1 and Exercise~4.18(a)]{Nelsen2006}.
	For completeness, we give a short direct verification in Appendix \ref{app:concavity}.

	If \(\theta_1<\theta_2\), equality cannot hold identically by
	Assumption~\ref{ass:bivariate-identifiability}. Since the copulas are continuous, the
	inequality is therefore strict on a set of positive Lebesgue
	measure. Consequently
	\[
	\bar\rho_{\mathcal K}(\theta)
	:=
	12\int_0^1\!\!\int_0^1
	C^{(2)}_\theta(u,v)\,du\,dv-3
	\]
	is strictly increasing in \(\theta\). Moreover,
	Assumption~\ref{it:continuous-family} implies that
	\(\bar\rho_{\mathcal K}\) is continuous. By \eqref{eq:kendall-height},
	\(h_{\mathcal K}:\Theta\to\mathbb R\) is a strictly decreasing
	homeomorphism, and therefore its inverse
	\(\theta_{\mathcal K}\) is continuous and strictly decreasing.
	Hence
	\(
	q_{\rho_S}
	=
	\bar\rho_{\mathcal K}\circ\theta_{\mathcal K}
	\)
	is continuous and strictly decreasing on \(\mathbb R\).
	Theorem~\ref{thm:informative-functionals}\textup{(iii)} therefore
	applies to Spearman's rho.
	
	The tail statements follow from the standard family-specific
	tail-dependence formulas displayed in
	Table~\ref{tab:tail-profiles}. After the substitution
	\(\theta=\theta_{\mathcal K}(h)\), the nonconstant profiles are
	explicitly smooth with nowhere-vanishing derivative and hence
	locally bi-Lipschitz. Thus
	Theorem~\ref{thm:informative-functionals}\textup{(iii)--(iv)}
	applies to lower-tail dependence for Clayton and to upper-tail
	dependence for Gumbel and Joe. The remaining tail profiles are
	identically zero.
\end{proof}

%
%

\FloatBarrier

	\section{Statistical recovery from pairwise dependence summaries}
	\label{sec:statistical-recovery}
	
	We now explain how the preceding geometric construction leads to natural
	statistical estimators of the reduced homogeneous nested Archimedean copula.
	The main point is that informative pairwise copula summaries are not merely
	diagnostic quantities: on the reduced model class they are coordinates, up to
	a one-dimensional transformation. This allows one to estimate the cophenetic
	coordinates, project them onto the stratified copula space, and recover the
	reduced hierarchy.
	
	Throughout this section, let
	\(
	X_1,\ldots,X_n
	\)
	be an i.i.d. sample from a \(d\)-dimensional distribution with continuous
	margins and copula
	\(
	C_0\in\mathcal C_d^{\mathcal K}.
	\) Denote \(H_0:=\Psi_{\cl K}(C_0).\)
	Let \(T:\mathfrak C_2\to\mathbb R\) be a bivariate copula functional, and let
	\(
	\mathsf T(C)
	=
	(T(C_{ij}))_{1\le i<j\le d}
	\)
	be the associated pairwise functional matrix. We assume that the profile
	\(
	q_T:\R\to q_T(\R)
	\)
	is a homeomorphism.
	Let
	\[
	\widehat{\mathsf T}_n
	=
	\bigl(\widehat T_{ij,n}\bigr)_{1\le i<j\le d}
	\]
	be an estimator of the pairwise \(T\)-matrix. We shall use the following assumption.
	
	\begin{assumption}{S}
		\label{ass:pairwise-summary-consistency}
		The estimator \(\widehat{\mathsf T}_n\) satisfies
		\(
		\|
		\widehat{\mathsf T}_n-\mathsf T(C_0)
		\|_2
		\to 0
		\)
		in probability, and,
		\(
		\mathbb P(
		\widehat{\mathsf T}_n\in q_T(\R)^{N}
		)
		\to 1.
		\)
	\end{assumption}

	The second condition only ensures that \(q_T^{-1}\) can be applied
	componentwise with probability tending to one. In applications, one may also
	replace \(\widehat{\mathsf T}_n\) by a truncated version taking values in
	\(q_T(\R)^{N}\); this does not change the conclusions below whenever
	\(\mathsf T(C_0)\) lies in the interior of \(q_T(\R)^{N}\).	
	Assumption \assref{ass:pairwise-summary-consistency} will be in force,
	together with Assumptions \assref{ass:K} and
	\assref{ass:kendall-height}, throughout this section.
	
	On the event
	\(
	\widehat{\mathsf T}_n\in q_T(\R)^{N},
	\)
	define the empirical cophenetic vector
	\(
	\widehat H_n
	:=
	Q_T^{-1}(\widehat{\mathsf T}_n).
	\)
	Outside this event, define
	\(\widehat H_n\) arbitrarily. Denote \(H_0:=\Psi_{\mathcal K}(C_0).\)	
	For \(y \in \R^N\) and a non-empty closed set \(F\subset\R^N\), write
	\(
	\Pi_F(y)
	:=
	\operatorname*{argmin}_{u\in F}\|y-u\|_2.
	\)
	This set is non-empty for every \(y\in\R^N\). If it is a singleton, we
	denote its unique element by \(\pi_F(y)\).
	Let
	\[
	\widehat H_n^\Pi
	\in
	\Pi_{\mathfrak U_d}
	(
	\widehat H_n).
	\]
	Since \(\mathfrak U_d\) is a finite union of closed convex
	polyhedral cones, it admits a Borel measurable nearest-point
	selection. Indeed, enumerate its maximal cones as
	\(\overline{\mathfrak U}^{\sigma_1},\ldots,
	\overline{\mathfrak U}^{\sigma_M}\), let \(\Pi_k\) denote the
	orthogonal projection onto
	\(\overline{\mathfrak U}^{\sigma_k}\), and, in case several cones
	give the same minimal distance, select the one with the smallest
	index. Each \(\Pi_k\) is continuous, so this rule defines a Borel
	measurable map from \(\R^N\) into \(\mathfrak U_d\).
	We then define the projected copula
	estimator by
	\[
	\widehat C_n^\Pi
	:=R_{\mathcal K}(\widehat H_n^\Pi).
	\]
	
	\begin{theorem}
		\label{thm:projected-estimator-consistency}
		\(
		d_{\mathcal C}(\widehat C_n^\Pi,C_0)
		\to 0
		\)
		in probability.
		Moreover, if \(Q_T^{-1}:q_T(\R)^N \to \R^N\) is locally Lipschitz in a neighbourhood of \(\mathsf T(C_0)\) and
		\(
		\|
		\widehat{\mathsf T}_n-\mathsf T(C_0)
		\|_2
		=
		O_{\mathbb P}(\varepsilon_n)
		\)
		for some deterministic sequence \(\varepsilon_n\downarrow0\), then
		\[
		d_{\mathcal C}(\widehat C_n^\Pi,C_0)
		=
		O_{\mathbb P}(\varepsilon_n).
		\]
	\end{theorem}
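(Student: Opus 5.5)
The plan rests on one observation: $d_{\mathcal C}(\widehat C_n^\Pi,C_0)=\|\widehat H_n^\Pi-H_0\|_2$. This follows because $\Psi_{\mathcal K}=R_{\mathcal K}^{-1}$ by Proposition~\ref{prop:dC-metric}, so $\Psi_{\mathcal K}(\widehat C_n^\Pi)=\widehat H_n^\Pi$. Everything then reduces to Euclidean estimates in $\mathbb R^N$. The key deterministic inequality is the standard nearest-point bound. Since $H_0\in\mathfrak U_d$ is feasible and $\widehat H_n^\Pi$ minimizes the distance to $\widehat H_n$ over $\mathfrak U_d$, we have $\|\widehat H_n^\Pi-\widehat H_n\|_2\le\|H_0-\widehat H_n\|_2$. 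The triangle inequality then gives
\[
\|\widehat H_n^\Pi-H_0\|_2\le 2\,\|\widehat H_n-H_0\|_2 .
\]
This uses no convexity of $\mathfrak U_d$, only existence of a nearest point (Corollary~\ref{cor:closed-ultrametric-image}\ref{it:cor1-it4}) and the measurable selection described above.

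For consistency I will show that $\widehat H_n\to H_0$ in probability. On the event $E_n:=\{\widehat{\mathsf T}_n\in q_T(\R)^N\}$ we have $\widehat H_n=Q_T^{-1}(\widehat{\mathsf T}_n)$, and $H_0=Q_T^{-1}(\mathsf T(C_0))$ by \eqref{eq:T-comp}. Since $q_T:\R\to q_T(\R)$ is a homeomorphism, $q_T^{-1}$ is continuous on $q_T(\R)$, so $Q_T^{-1}$ is continuous at $\mathsf T(C_0)$ relative to $q_T(\R)^N$. Fix $\epsilon>0$ and choose $\delta>0$ such that $z\in q_T(\R)^N$ with $\|z-\mathsf T(C_0)\|_2<\delta$ implies $\|Q_T^{-1}(z)-H_0\|_2<\epsilon/2$. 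Then
\[
\mathbb P\bigl(\|\widehat H_n^\Pi-H_0\|_2\ge\epsilon\bigr)\le\mathbb P(E_n^c)+\mathbb P\bigl(\|\widehat{\mathsf T}_n-\mathsf T(C_0)\|_2\ge\delta\bigr)\to0
\]
by Assumption~\assref{ass:pairwise-summary-consistency}. The arbitrary definition of $\widehat H_n$ outside $E_n$ is harmless, since it only enters through $\mathbb P(E_n^c)\to0$.

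For the rate, let $L$ and $\delta_0>0$ be such that $\|Q_T^{-1}(z)-Q_T^{-1}(z')\|_2\le L\|z-z'\|_2$ for $z,z'\in q_T(\R)^N\cap B_{\delta_0}(\mathsf T(C_0))$. On the event $E_n\cap\{\|\widehat{\mathsf T}_n-\mathsf T(C_0)\|_2<\delta_0\}$, whose probability tends to one, we get
\[
\|\widehat H_n^\Pi-H_0\|_2\le 2L\|\widehat{\mathsf T}_n-\mathsf T(C_0)\|_2 .
\]
Given $\eta>0$, choose $M$ with $\mathbb P(\|\widehat{\mathsf T}_n-\mathsf T(C_0)\|_2>M\varepsilon_n)<\eta$ for all large $n$. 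Combining with the vanishing probability of the complement event yields $\mathbb P(\|\widehat H_n^\Pi-H_0\|_2>2LM\varepsilon_n)<2\eta$ eventually, which is the claimed $O_{\mathbb P}(\varepsilon_n)$ bound.

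I do not expect a real obstacle. The only delicate points are bookkeeping: using the non-convex projection only through the factor-two inequality, and handling the complement of $E_n$ and the local (rather than global) nature of the Lipschitz and continuity properties by intersecting with high-probability events.
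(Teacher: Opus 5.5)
Your proposal is correct and follows the paper's proof essentially step by step. Continuity of \(Q_T^{-1}\), which holds because \(q_T\) is a homeomorphism, gives \(\widehat H_n\to H_0\) in probability; the nearest-point property with \(H_0\in\mathfrak U_d\) feasible gives \(\|\widehat H_n^\Pi-H_0\|_2\le 2\|\widehat H_n-H_0\|_2\); and local Lipschitz continuity of \(Q_T^{-1}\) transfers the rate. Your explicit \(\epsilon\)--\(\delta\) argument and your intersection with high-probability events are somewhat more careful than the paper's version, which leaves those steps implicit.
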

	
	\begin{proof}
		Since \(q_T:\R\to q_T(\R)\) is a homeomorphism, the map
		\(
		Q_T^{-1}:q_T(\R)^{N}\to \R^{N}
		\)
		is continuous. Therefore, on the event
		\(\{\widehat{\mathsf T}_n\in q_T(\R)^{N}\}\),
		\(
		\widehat{\mathsf T}_n\to \mathsf T(C_0)\) implies \(
		\widehat H_n=Q_T^{-1}(\widehat{\mathsf T}_n)
		\to
		Q_T^{-1}(\mathsf T(C_0))
		=
		H_0.
		\)
		Since \(\{\widehat{\mathsf T}_n\in q_T(\R)^{N}\}\) has probability tending to one, we have
		\begin{equation}
			\label{eq:cophenetic-consistency}
			\widehat H_n
			\to
			H_0 \quad \text{in probability in }\R^N.
		\end{equation}
		Since \(H_0\in\mathfrak U_d\), the definition of
		\(\widehat H_n^\Pi\) gives
		\(
		\|\widehat H_n-\widehat H_n^\Pi\|_2
		\le
		\|\widehat H_n-H_0\|_2.
		\)
		Therefore, by \eqref{eq:cophenetic-consistency},
		\[
		\begin{aligned}
			d_{\mathcal C}(\widehat C_n^\Pi,C_0)=\|\widehat H_n^\Pi-H_0\|_2
			&\le
			\|\widehat H_n^\Pi-\widehat H_n\|_2
			+
			\|\widehat H_n-H_0\|_2  \le
			2\|\widehat H_n-H_0\|_2\to0 \quad \text{in probability.}
		\end{aligned}
		\]

		If in addition
		\(Q_T^{-1}\) is locally Lipschitz on a neighborhood of \(\mathsf T(C_0)\) we have
		\[
		d_{\mathcal C}(\widehat C_n^\Pi,C_0)\le 2\|\widehat H_n-H_0\|_2
		=
		2\left\|
		Q_T^{-1}(\widehat{\mathsf T}_n)
		-
		Q_T^{-1}(\mathsf T(C_0))
		\right\|_2
		=
		O_{\mathbb P}(\varepsilon_n).
		\]
		This concludes the proof.
	\end{proof}
	
	We observe that if \(q_T\) is locally bi-Lipschitz, as in the cases discussed in Section \ref{subsec:pairwise-functionals}, the \(Q_T^{-1}\) Lipschitzianity assumption is satisfied. 
	
	We next discuss recovery of the reduced hierarchy. For
	\(H\in\mathfrak U_d\) and \(\varepsilon\ge0\), define
	\(\ctr_\varepsilon(H)\) as the hierarchy obtained from \(\sigma(H)\) by
	contracting all non-root internal edges \(v\) such that
	\(\eta_v(H)\le\varepsilon\). In particular,
	\(\ctr_0(H)=\sigma(H)\).	
	For \(C\in\mathcal C_d^{\mathcal K}\), define
	\[
	\sigma_\varepsilon(C)
	:=\ctr_\varepsilon(\Psi_{\mathcal K}(C)).
	\]
	Thus \(\sigma_0(C)\) is the reduced hierarchy of \(C\).	
	The following lemma is the local stability statement behind hierarchy
	recovery. Near a point in a given fan stratum, only refinements of its reduced
	hierarchy can occur; the extra refinement edges have small increments, while
	the genuine edges remain bounded away from zero.
	
	\begin{lemma}
		\label{lem:threshold-local-stability}
		Let \(\sigma(H_0)=\bar \sigma\). There exist constants
		\(a,b>0\) and a neighborhood \(V\) of \(H_0\) in
		\(\mathfrak U_d\) such that, if \(H\in V\),
		\begin{equation}
			\label{eq:lem2-hypothesis}
		\|H-H_0\|_2\le a\varepsilon,
		\qquad 0<\varepsilon<b,
		\end{equation}
		then \(\ctr_\varepsilon(H)=\bar \sigma\).
	\end{lemma}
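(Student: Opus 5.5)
The plan is to work locally in the coordinates of a single refinement and use the fact that \(L_\tau^{-1}\) is a bounded linear map on \(V_\tau\). First, by \eqref{eq:local-refinement-union}, choose \(\delta>0\) such that every \(H\in\mathfrak U_d\cap B_\delta(H_0)\) lies in \(\MovU{\tau}\) for some refinement \(\tau\succeq\bar\sigma\). By Theorem~\ref{thm:ultrametric-polyhedral-characterization}(i), we may take \(\tau=\sigma(H)\) itself, since \(H\in\mathfrak U^{\sigma(H)}\subset\MovU{\tau}\) forces \(\sigma(H)\preceq\tau\). Hence \(\sigma(H)\succeq\bar\sigma\). We then set \(V:=\mathfrak U_d\cap B_\delta(H_0)\).

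Next, fix a refinement \(\tau\succeq\bar\sigma\). In \(\tau\)-coordinates, write \(H_0=L_\tau(r_0,\eta^\tau_0)\). As in the proof of Lemma~\ref{lem:local-tangent-fan}:
\begin{itemize}
\item \(\eta^\tau_{0,v}=\eta_v(H_0)>0\) for \(A_v\in\mathscr A(\bar\sigma)\setminus\{[d]\}\);
\item \(\eta^\tau_{0,v}=0\) for \(A_v\in\mathscr A(\tau)\setminus\mathscr A(\bar\sigma)\).
\end{itemize}
Since \(L_\tau:\R^{K(\tau)}\to V_\tau\) is a linear isomorphism, there is \(M_\tau<\infty\) with \(\|L_\tau^{-1}(u)\|_\infty\le M_\tau\|u\|_2\) for \(u\in V_\tau\). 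Let \(M:=\max_\tau M_\tau\) over the finitely many refinements, and let \(\eta_{\min}:=\min_{v\in I_{\bar\sigma}}\eta_v(H_0)>0\) (if \(I_{\bar\sigma}=\emptyset\), this condition is vacuous).

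Now take \(H\in V\) and set \(\tau:=\sigma(H)\), so that \(H=L_\tau(r,\eta)\) with all \(\eta_v>0\), and \(H-H_0\in V_\tau\). Then \(|\eta_v-\eta^\tau_{0,v}|\le M\|H-H_0\|_2\le Ma\varepsilon\). Choose \(a:=1/(2M)\), which gives \(|\eta_v-\eta^\tau_{0,v}|\le\varepsilon/2\). This yields the two required estimates.
\begin{itemize}
\item For the extra clusters \(A_v\in\mathscr A(\tau)\setminus\mathscr A(\bar\sigma)\), we get \(0<\eta_v\le\varepsilon/2\le\varepsilon\), so these edges are contracted by \(\ctr_\varepsilon\).
\item For the genuine clusters \(A_v\in\mathscr A(\bar\sigma)\setminus\{[d]\}\), we get \(\eta_v\ge\eta_{\min}-\varepsilon/2>\varepsilon\), provided \(\varepsilon<b:=2\eta_{\min}/3\). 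Such edges are kept.
\end{itemize}
By definition of \(\sigma/B\), contracting exactly the edges of \(\mathscr A(\tau)\setminus\mathscr A(\bar\sigma)\) gives the cluster family \(\mathscr A(\bar\sigma)\). Hence \(\ctr_\varepsilon(H)=\bar\sigma\).

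The main point requiring care is that the increments \(\eta_v(H)\) are reduced coordinates of \(H\) in \(\sigma(H)\), not in \(\bar\sigma\). The argument therefore needs two facts: that \(\sigma(H)\) is a refinement of \(\bar\sigma\), and that \(\eta_v(H)\) equals the \(\tau\)-coordinate used in the Lipschitz bound. Both follow from Theorem~\ref{thm:ultrametric-polyhedral-characterization} and \eqref{eq:local-refinement-union}. Uniformity of \(a\) and \(b\) follows from the finiteness of the set of refinements.
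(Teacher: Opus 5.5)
Your strategy is the paper's: pass to the coordinates of \(\tau=\sigma(H)\), use a bound on \(L_\tau^{-1}\) that is uniform over the finitely many refinements of \(\bar\sigma\), and separate the extra increments (at most \(\varepsilon/2\)) from the genuine ones (at least \(\eta_{\min}-\varepsilon/2\)). Your constants \(a=1/(2M)\), \(b=2\eta_{\min}/3\) work as well as the paper's \(Ca<1/2\), \(b=m/2\).

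The problem is the step you yourself flag as delicate. From \(H\in\mathfrak U^{\sigma(H)}\subset\MovU{\tau}\) you correctly obtain \(\sigma(H)\preceq\tau\). But combined with \(\bar\sigma\preceq\tau\), this says nothing about how \(\sigma(H)\) and \(\bar\sigma\) compare: both are just coarsenings of \(\tau\). As far as this argument goes, \(H\) could lie on a face \(\MovU{\gamma}\subset\MovU{\tau}\) with \(\bar\sigma\not\preceq\gamma\), for instance the star hierarchy. So ``hence \(\sigma(H)\succeq\bar\sigma\)'' is a non sequitur. The step is also load-bearing: without it you know neither \(H_0\in V_{\sigma(H)}\) nor the two bullet facts about the \(\sigma(H)\)-coordinates of \(H_0\).

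What is missing is the observation that the genuine increments of \(H_0\) cannot collapse at nearby points. Either of two short repairs works.

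(a) Each \(\MovU{\gamma}\) is closed. By Theorem~\ref{thm:ultrametric-polyhedral-characterization}(i) and disjointness of the strata, it contains \(H_0\) if and only if \(\bar\sigma\preceq\gamma\). Choose \(\delta\) smaller than \(\operatorname{dist}(H_0,\MovU{\gamma})\) for all (finitely many) \(\gamma\) with \(\bar\sigma\not\preceq\gamma\). Then \(H\in\MovU{\sigma(H)}\cap B_\delta(H_0)\) forces \(\bar\sigma\preceq\sigma(H)\).

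(b) Alternatively, run your own Lipschitz estimate in the coordinates of the cone \(\MovU{\tau}\) supplied by \eqref{eq:local-refinement-union}. Write \(H=L_\tau(r,\eta^\tau)\) with \(\eta^\tau\ge0\). Once \(M\delta<\eta_{\min}\), the genuine coordinates satisfy \(\eta^\tau_v\ge\eta_{\min}-M\|H-H_0\|_2>0\), so all vanishing coordinates belong to extra clusters. By the proof of Theorem~\ref{thm:ultrametric-polyhedral-characterization}(i), \(\sigma(H)=\tau/B\) with \(B\) consisting of extra clusters only. This gives both \(\sigma(H)\succeq\bar\sigma\) and \(\eta_v(H)=\eta^\tau_v\) on the surviving clusters.

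With either repair, the rest of your proof goes through unchanged. The paper states this refinement property in one line, citing the same two results. The content is the same, but your explicit derivation of it does not work as written.
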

	
	\begin{proof}
		By \eqref{eq:local-refinement-union} and Theorem~\ref{thm:ultrametric-polyhedral-characterization}, there exists a neighborhood \(V\)
		of \(H_0\) such that every \(H\in V\) has reduced hierarchy
		\(\tau=\sigma(H)\) such that \(
		\bar\sigma\preceq\tau.
		\)
		Fix such a refinement \(\tau\). Since \(L_\tau:\R^{K(\tau)}
		\to
		V_\tau\) is bijective and linear,
		\(
		L_\tau^{-1}
		:
		V_\tau
		\to
		\R^{K(\tau)}
		\)
		is linear and, thus, continuous. For
		\(G\in V_\tau\),
		write
		\(
		L_\tau^{-1}(G)
		=(r^\tau(G),
		(\eta_v^\tau(G))_{v\in I_\tau}).
		\)
		There is therefore a constant \(C_\tau<\infty\) such that
		\(
		|\eta_v^\tau(G)-\eta_v^\tau(G')|
		\le
		C_\tau\|G-G'\|_2,
		\)
		for all
		\(G,G'\in V_\tau\) and \(v \in I_\tau\).
		Since there are only finitely many refinements of \(\bar\sigma\), we may set
		\(
		C
		:=
		\max_{\tau:\,\bar\sigma\preceq\tau} C_\tau
		<\infty.
		\) Moreover, suppose \(I_{\bar\sigma}\neq\emptyset\) and set \(m
		:=
		\min_{v\in I_{\bar\sigma}}
		\eta_v(H_0)
		>0.\)
	
		If \(H\in V\cap\mathfrak U^\tau\), we have
		\begin{gather*}
			0<\eta_v(H)
			=
			\eta_v^\tau(H)
			\le
			C\|H-H_0\|_2,
			\qquad
			A_v\in
			\mathscr A(\tau)\setminus\mathscr A(\bar\sigma),\\
			\eta_v(H)
			=
			\eta_v^\tau(H)
			\ge
			\eta_v^\tau(H_0)-C\|H-H_0\|_2
			\ge
			m-C\|H-H_0\|_2\qquad A_v \in \mathscr{A}(\bar \sigma)\setminus\{[d]\}.
		\end{gather*}
		Choose
		\(a>0\) such that \(
		Ca<1/2\) and \(
		b:=m/2.
		\)
		If \eqref{eq:lem2-hypothesis} holds,
		then 
		\begin{gather*}
			\eta_v(H)
			\le
			Ca\varepsilon
			<
			\varepsilon,
			\qquad
			A_v\in
			\mathscr A(\tau)\setminus\mathscr A(\bar\sigma),\\
			\eta_v(H)
			\ge
			m-Ca\varepsilon
			>
			m-\frac{\varepsilon}{2}
			>
			\varepsilon,\qquad A_v \in \mathscr{A}(\bar \sigma)\setminus\{[d]\}.
		\end{gather*}
		Thus \(\ctr_\varepsilon(H)\) contracts exactly the edges corresponding to
		\(\mathscr A(\tau)\setminus\mathscr A(\bar\sigma)\) and preserves exactly
		the non-root internal clusters of \(\bar\sigma\). Consequently,
		\(
		\ctr_\varepsilon(H)=\bar\sigma.
		\)
		
		If \(I_{\bar\sigma}=\varnothing\), there are no genuine non-root internal
		edges to preserve. Choose any \(a>0\) such that \(Ca<1\) and any \(b>0\).
		Then, if \(H\in V\cap\mathfrak U^\tau\), we have
		\(
		\eta_v(H)
		\le
		Ca\varepsilon
		<
		\varepsilon
		\)
		for every non-root internal edge of \(\sigma(H)\). Hence all such edges are
		contracted and again
		\(\ctr_\varepsilon(H)=\bar\sigma\).
	\end{proof}

	\begin{theorem}
		\label{thm:thresholded-hierarchy-recovery}
		Let \(\sigma(H_0)=\bar \sigma\). Let \(\widehat C_n\) be any
		\(\mathcal C_d^{\mathcal K}\)-valued estimator such that
		\(d_{\mathcal C}(\widehat C_n,C_0)\to0\) in probability.
		
		\begin{enumerate}[label=(\roman*), ref=(\roman*)]
			\item If \(\varepsilon_n\downarrow0\) and
			\(
			d_{\mathcal C}(\widehat C_n,C_0)
			=o_{\mathbb P}(\varepsilon_n),
			\)
			then
			\(
			\mathbb P(
			\sigma_{\varepsilon_n}(\widehat C_n)=\bar\sigma
			)\to1.
			\)
			
			\item If \(\bar\sigma\in\mathcal B_d\), then
			\(
			\mathbb P(
			\sigma_0(\widehat C_n)=\bar\sigma
			)\to1.
			\)
		\end{enumerate}
	\end{theorem}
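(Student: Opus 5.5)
Both parts reduce to Lemma~\ref{lem:threshold-local-stability} once we pass to cophenetic coordinates. Set \(\widehat H_n:=\Psi_{\mathcal K}(\widehat C_n)\in\mathfrak U_d\). By Proposition~\ref{prop:dC-metric}, \(d_{\mathcal C}(\widehat C_n,C_0)=\|\widehat H_n-H_0\|_2\), and by definition \(\sigma_\varepsilon(\widehat C_n)=\ctr_\varepsilon(\widehat H_n)\). Let \(a,b>0\) and the neighbourhood \(V\) of \(H_0\) in \(\mathfrak U_d\) be those supplied by Lemma~\ref{lem:threshold-local-stability}. Shrinking \(V\) if necessary, we may take \(V\supseteq\mathfrak U_d\cap B_\delta(H_0)\) for some \(\delta>0\).

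For (i), fix \(n\) large enough that \(\varepsilon_n<b\); this is possible since \(\varepsilon_n\downarrow0\). Consider the events
\[
E_n:=\{\|\widehat H_n-H_0\|_2<\delta\}\cap\{\|\widehat H_n-H_0\|_2\le a\varepsilon_n\}.
\]
On \(E_n\) we have \(\widehat H_n\in V\) and the hypothesis \eqref{eq:lem2-hypothesis} holds with \(\varepsilon=\varepsilon_n\). Lemma~\ref{lem:threshold-local-stability} then gives \(\ctr_{\varepsilon_n}(\widehat H_n)=\bar\sigma\). Consistency gives \(\mathbb P(\|\widehat H_n-H_0\|_2<\delta)\to1\). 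The rate assumption \(d_{\mathcal C}(\widehat C_n,C_0)=o_{\mathbb P}(\varepsilon_n)\) means \(\|\widehat H_n-H_0\|_2/\varepsilon_n\to0\) in probability, so \(\mathbb P(\|\widehat H_n-H_0\|_2\le a\varepsilon_n)\to1\). Hence \(\mathbb P(E_n)\to1\), and (i) follows.

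For (ii), the threshold cannot be used, because \(\ctr_0\) contracts only zero increments. We instead argue through the local structure directly. By \eqref{eq:local-refinement-union} and Theorem~\ref{thm:ultrametric-polyhedral-characterization}, every \(H\in\mathfrak U_d\) close enough to \(H_0\) has a reduced hierarchy \(\sigma(H)\succeq\bar\sigma\). Since \(\bar\sigma\in\mathcal B_d\) is binary, it admits no proper refinement: it already has the maximal number \(d-1\) of internal vertices, and any proper refinement would have strictly more. Therefore \(\sigma(H)=\bar\sigma\) for all such \(H\). Equivalently, \(\mathfrak U^{\bar\sigma}\) is relatively open in \(\mathfrak U_d\). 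Consistency puts \(\widehat H_n\) in this neighbourhood with probability tending to one, and then \(\sigma_0(\widehat C_n)=\sigma(\widehat H_n)=\bar\sigma\).

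The only genuinely delicate point is the quantifier bookkeeping in (i). We must ensure that \(a\), \(b\) and \(V\) do not depend on \(\varepsilon\), and that membership \(\widehat H_n\in V\) is secured separately from the rate condition. Both are provided by the form of Lemma~\ref{lem:threshold-local-stability} together with the preliminary consistency assumption.
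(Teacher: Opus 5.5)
Your proposal is correct and follows the paper's proof: you pass to cophenetic coordinates via \(\Psi_{\mathcal K}\), obtain (i) from Lemma~\ref{lem:threshold-local-stability}, and obtain (ii) from the relative openness of the binary stratum \(\mathfrak U^{\bar\sigma}\) in \(\mathfrak U_d\), since a binary hierarchy has no proper refinement. The paper states (i) in one line, and you spell out the event bookkeeping it leaves implicit; the phrase ``shrinking \(V\) if necessary'' is superfluous, because you only use that the neighbourhood \(V\) contains some relative ball \(\mathfrak U_d\cap B_\delta(H_0)\).
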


	\begin{proof}
		Set \(\widehat G_n:=\Psi_{\mathcal K}(\widehat C_n)\). By definition of
		the cophenetic metric,
		\(
		\|\widehat G_n-H_0\|_2
		=d_{\mathcal C}(\widehat C_n,C_0).
		\)
		The first assertion follows directly from
		Lemma~\ref{lem:threshold-local-stability}.
		
		If \(\bar \sigma\) is binary, \(\mathfrak U^{\bar \sigma}\) is the relative
		interior of a maximal cone. Since the fan is finite, a sufficiently small
		neighborhood of \(H_0\) in its support is contained in
		\(\mathfrak U^{\bar \sigma}\). Hence
		\(\mathbb P(\widehat G_n\in\mathfrak U^{\bar \sigma})\to1\), which is
		equivalent to~(ii).
	\end{proof}

	We finally observe that Theorem~\ref{thm:thresholded-hierarchy-recovery} applies 
	to the projected estimator \(\widehat C_n^\Pi\) using Theorem~\ref{thm:projected-estimator-consistency}. In particular, suppose that \(Q_T^{-1}\) is locally Lipschitz in a
	neighborhood of \(\mathsf T(C_0)\), and that, for some
	deterministic sequence \(r_n\downarrow0\),
	\(\|\widehat{\mathsf T}_n-\mathsf T(C_0)\|_2
	=
	O_{\mathbb P}(r_n).
	\)
	Then, by Theorem~\ref{thm:projected-estimator-consistency},
	\(
	d_{\mathcal C}(\widehat C_n^\Pi,C_0)
	=
	O_{\mathbb P}(r_n),
	\)
	and hence, for every deterministic sequence \(\varepsilon_n\downarrow0\)
	such that \(r_n=o(\varepsilon_n)\), Theorem~\ref{thm:thresholded-hierarchy-recovery} gives
	\(
	\mathbb P(
	\sigma_{\varepsilon_n}(\widehat C_n^\Pi)=\bar\sigma)\to1.
	\)
	Thus, at a multifurcating hierarchy, consistent recovery is obtained by
	letting the contraction threshold vanish more slowly than the estimation
	error.
	
	\section{Singular asymptotics on the ultrametric fan}
	\label{sec:singular-clt}
	
	The asymptotic distribution of the projected estimator reflects
	the general tangent-cone principle for constrained estimation
	developed by \citet{Geyer1994}: local quadratic behavior of the
	criterion and the tangent geometry of the feasible set determine
	the limiting distribution. Since \cite[Theorem~4.4]{Geyer1994} is formulated for empirical-average
	criteria under specific regularity assumptions, here we establish the limit directly,
	using the exact local rescaling identity of
	Lemma~\ref{lem:local-tangent-fan} and the nearest-point property
	of the estimator. When the true copula is multifurcating,
	several refinement strata meet at the true point, and the
	limiting feasible set is a generally non-convex tangent fan.
	The limiting distribution is the Euclidean projection of a
	Gaussian vector onto this fan; almost-sure uniqueness of that
	projection follows from \citet[Proposition~4.2]{Geyer1994}. We now state the relevant assumptions.
	
\begin{assumption}{P}
	\label{ass:projection}
	For every \(1\le i<j\le d\), the profile \(q_T\) is differentiable in a neighbourhood of
	\(H_{0,ij}\), with
	\(
	q_T'(H_{0,ij})\neq0.
	\)
	Set
	\(
	D_T
	:=
	\operatorname{diag}
	\left(
	q_T'(H_{0,ij})
	\right)_{1\le i<j\le d}.
	\)
	The dependence-summary estimator satisfies
	\[
	\sqrt n
	\left(
	\widehat{\mathsf T}_n-\mathsf T(C_0)
	\right)
	\Rightarrow
	Z,
	\qquad
	Z\sim\mathcal N_N(0,\Sigma_T).
	\]
\end{assumption}

\begin{assumption}{D}
	\label{ass:density}
	The Gaussian vector
	\(
	D_T^{-1}Z
	\)
	has a density with respect to Lebesgue measure on \(\mathbb R^N\).
\end{assumption}

\begin{remark}
	\label{rem:lebesgue-density}
	Since \(D_T\) is invertible and \(Z\) is Gaussian, Assumption~\assref{ass:density} is equivalent to
	positive definiteness of \(\Sigma_T\).
	
	Moreover, Assumption~\assref{ass:projection} implies
	Assumption~\assref{ass:pairwise-summary-consistency}: the central
	limit theorem gives
	\(
	\widehat{\mathsf T}_n-\mathsf T(C_0)
	\to0
	\)
	in probability, while \(q_T(\mathbb R)^N\) is open and contains
	\(\mathsf T(C_0)\).
\end{remark}

	
%

	\begin{theorem}
		\label{thm:whole-space-singular-clt}
		Let  \(\bar\sigma:=\sigma(H_0)\).
		Under Assumptions \assref{ass:projection} and \assref{ass:density}, \(\widehat H_n^\Pi\) satisfies
	\begin{equation}
		\label{eq:whole-space-singular-clt}
		\sqrt n\left(
		\widehat H_n^\Pi-H_0
		\right)
		\Rightarrow
		\pi_{\mathcal T_{\bar\sigma}}(D_T^{-1}Z).
	\end{equation}
		Moreover, 
		\begin{equation}
			\label{eq:asymptotic-binary-refinement}
			\mathbb P\left(
			\bar\sigma\preceq\sigma(\widehat H_n^\Pi)\in\mathcal B_d
			\right)
			\longrightarrow1.
		\end{equation}
		In particular,
		\[
		\mathbb P(\sigma(\widehat H_n^\Pi)=\bar\sigma)
		\longrightarrow
		\begin{cases}
			1, & \bar\sigma\in\mathcal B_d,\\
			0, & \bar\sigma\notin\mathcal B_d.
		\end{cases}
		\]
	\end{theorem}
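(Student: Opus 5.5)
The plan has three steps: a delta-method expansion of the inverted summaries, an exact rescaling of the projection problem using Lemma~\ref{lem:local-tangent-fan}, and a continuous-mapping argument, followed by a structural analysis of the limit.

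\emph{Delta method.} On the event $\{\widehat{\mathsf T}_n\in q_T(\R)^N\}$, whose probability tends to one by Remark~\ref{rem:lebesgue-density}, apply the scalar inverse function theorem to $q_T$ at each $H_{0,ij}$; this uses Assumption~\assref{ass:projection}. Combined with the joint CLT, the delta method gives
\[
W_n:=\sqrt n(\widehat H_n-H_0)\Rightarrow W:=D_T^{-1}Z .
\]

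\emph{Rescaling the projection.} Set $t_n:=n^{-1/2}$. Since $\widehat H_n^\Pi$ is a nearest point of $\widehat H_n$ in $\mathfrak U_d$, the vector $U_n:=\sqrt n(\widehat H_n^\Pi-H_0)$ is a nearest point of $W_n$ in $t_n^{-1}(\mathfrak U_d-H_0)$. The inequality $\|U_n\|\le 2\|W_n\|$ from the proof of Theorem~\ref{thm:projected-estimator-consistency} shows that $U_n$ is tight. Fix $R$ and work on the event $\|W_n\|\le R/3$. On this event, every candidate within distance $\|W_n\|$ of $W_n$ (in particular $U_n$ and any competitor) lies in $\overline B_R(0)$. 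By \eqref{eq:exact-local-tangent-rescaling}, for $n$ large the rescaled set intersected with $\overline B_R(0)$ equals $\mathcal T_{\bar\sigma}\cap\overline B_R(0)$. Since $0$ belongs to both sets, this yields $U_n\in\Pi_{\mathcal T_{\bar\sigma}}(W_n)$ exactly on that event.

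\emph{Continuous mapping.} Geyer's Proposition~4.2 together with Assumption~\assref{ass:density} gives that $\Pi_{\mathcal T_{\bar\sigma}}(w)$ is a singleton for Lebesgue-a.e.\ $w$, hence a.s.\ at $W$. At such points the set-valued projection is continuous: limits of nearest points are nearest points, and by uniqueness and tightness any selection converges. The extended continuous mapping theorem then gives $U_n\Rightarrow\pi_{\mathcal T_{\bar\sigma}}(W)$. Letting $R\to\infty$ handles the complementary event, and \eqref{eq:whole-space-singular-clt} follows.

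\emph{Binary refinement.} For \eqref{eq:asymptotic-binary-refinement}, consistency and \eqref{eq:local-refinement-union} already give $\bar\sigma\preceq\sigma(\widehat H_n^\Pi)$ with probability tending to one. Binarity is the main obstacle. Two facts are needed.
\begin{enumerate}[label=(\alph*)]
\item The limit $\pi_{\mathcal T_{\bar\sigma}}(W)$ a.s.\ lies in the interior, relative to $V_\tau$, of some maximal cone $\mathcal T_{\tau\mid\bar\sigma}$ with $\tau$ binary.
\item This relation transfers from the limit to $U_n$.
\end{enumerate}

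For (a), consider a non-maximal face $F$, of dimension less than $N$. The set of $w$ whose unique projection lands in $\operatorname{relint}F$ is contained in $F+N_F$, where $N_F$ is the normal cone to a convex cone $\mathcal T_{\tau\mid\bar\sigma}\supset F$, taken relative to $\R^N$. I would argue that $w-\pi(w)$ is orthogonal to $\operatorname{span}F$ and, by local optimality, also lies in the polar of every refinement cone through $F$. Crucially, $F$ has a neighbour in the fan of full dimension within $V_\tau$: every non-binary stratum can be locally refined into a binary cone. So $w-\pi(w)$ is constrained to a set of dimension at most $N-\dim F-1$, making $F+N_F$ Lebesgue-null. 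I expect the dimension count to be the delicate point, because the relevant cones have dimension $d-1<N$, so the projection can land in a lower-dimensional cone with positive probability only if something goes wrong. The count must use that the residual is forced into the polar of a strictly larger cone, and I would verify this through local perturbation within the maximal cone. Assumption~\assref{ass:density} makes all such sets null.

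For (b), on the event $\|W_n\|\le R/3$ the exact identity shows that $\sigma(\widehat H_n^\Pi)$ equals the $\tau$ indexing the stratum of $\mathcal T_{\bar\sigma}$ containing $U_n$. Since the relative interiors of the maximal tangent cones are open in $\mathcal T_{\bar\sigma}$, the portmanteau theorem gives the probability bound. The final dichotomy follows: when $\bar\sigma\in\mathcal B_d$, the binary refinement must equal $\bar\sigma$, and otherwise a binary refinement cannot equal $\bar\sigma$.
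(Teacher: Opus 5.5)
Your treatment of \eqref{eq:whole-space-singular-clt} is sound and follows the paper: the delta method, the bound $\|U_n\|_2\le 2\|W_n\|_2$, the exact identity \eqref{eq:exact-local-tangent-rescaling} on a ball, and a.e.\ uniqueness of the projection. Using the extended continuous mapping theorem in place of the paper's subsequence and Skorokhod argument is an equivalent variant.

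The gap is in step (a), which carries all of \eqref{eq:asymptotic-binary-refinement}. The mechanism you propose cannot produce a null set. Take $N_F$ to be the normal cone of a single convex cone $\mathcal T_{\tau\mid\bar\sigma}\supset F$. Then $\operatorname{relint}F+N_F$ is exactly the set of points whose projection onto that convex cone lands in $\operatorname{relint}F$, and it has positive Lebesgue measure. For comparison, projecting a standard Gaussian onto the nonnegative orthant gives the origin with probability $2^{-N}$. Likewise, placing the residual $r=y-u$ in the polar of a strictly larger cone only yields inequalities $\langle r,b_A\rangle\ge0$, and inequalities do not lower dimension. The existence of a higher-dimensional neighbouring cone is true of every non-maximal face of every convex cone, so it cannot be what forces nullness. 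Neither \emph{local perturbation within the maximal cone} nor the existence of a binary refinement gives your claimed bound $N-\dim F-1$.

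The missing idea is to exploit the non-convexity of $\mathcal T_{\bar\sigma}$ at a multifurcation, using several incompatible refinement cones simultaneously. The paper argues as follows.
\begin{enumerate}
\item Suppose $u=\pi_{\mathcal T_{\bar\sigma}}(y)\in\operatorname{relint}\mathcal T_{\gamma\mid\bar\sigma}$ with $\gamma$ non-binary, and pick a vertex of $\gamma$ with children $B_1,\dots,B_m$, $m\ge3$.
\item Two-sided perturbations in $V_\gamma$ give $r\perp V_\gamma$.
\item One-sided perturbations along each of the $\binom m2$ one-edge refinements adding $B_p\cup B_q$ give $\langle r,b_{B_p\cup B_q}\rangle\ge0$.
\item The identity $\sum_{p<q}b_{B_p\cup B_q}=b_{A_v}+(m-2)\sum_p b_{B_p}\in V_\gamma$ then forces all these inequalities to be equalities.
\item Hence $r\perp V_{\tau_{pq}}$, so $y\in V_\gamma+V_{\tau_{pq}}^\perp$, which is a hyperplane.
\item Finitely many such hyperplanes cover the bad set, so it is Lebesgue-null.
\end{enumerate}
Some strictly positive linear relation of this kind among refinement generators modulo $V_\gamma$ is indispensable. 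Equivalently, the cone they generate modulo $V_\gamma$ must fail to be pointed. Without it, step (a) is unproved.

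Step (b) is fine in outline. It is cleaner, however, to apply the portmanteau upper bound to the closed union $\mathcal S_{\bar\sigma}$ of the non-maximal cones of $\mathcal T_{\bar\sigma}$, as the paper does. The sets you use are open only relative to $\mathcal T_{\bar\sigma}$, and $U_n$ need not lie in $\mathcal T_{\bar\sigma}$ off your event.
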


	\begin{proof}
		Since \(D_T\) is invertible, the multivariate delta method applied to
		the componentwise inverse \(Q_T^{-1}\) gives
		\begin{equation*}
			Y_n
			:=
			\sqrt n(\widehat H_n-H_0)
			=
			D_T^{-1}
			\sqrt n\left(
			\widehat{\mathsf T}_n-\mathsf T(C_0)
			\right)
			+o_{\mathbb P}(1)
			\Rightarrow Y:=D_T^{-1}Z.
		\end{equation*}
		The arbitrary definition of \(\widehat H_n\) outside the event
		\(\widehat{\mathsf T}_n\in q_T(\R)^N\) does not affect the limit because
		that event has probability tending to one.
		
		Set
		\(
		U_n
		:=
		\sqrt n(\widehat H_n^\Pi-H_0)
		\)
		and
		\(
		\mathfrak W_n
		:=
		\sqrt n(\mathfrak U_d-H_0).
		\)
		By the definition of \(\widehat H_n^\Pi\),
		\begin{equation}
			\label{eq:scaled-projection-problem}
			U_n\in\Pi_{\mathfrak W_n}(Y_n).
		\end{equation}
		Moreover, \(0\in\mathfrak W_n\), and hence
		\(
		\|U_n-Y_n\|_2
		\le
		\|Y_n\|_2,
		\)
		from which it follows that
		\begin{equation}
			\label{eq:Y-dominates-U}
			\|U_n\|_2
			\le
			2\|Y_n\|_2.
		\end{equation}
		Thus, tightness of \((Y_n)\) gives joint tightness of \((Y_n,U_n)\).
		
		Fix \(M<\infty\). On the event \(\{\|Y_n\|_2\le M\}\),
		\eqref{eq:Y-dominates-U} implies that every minimizer in
		\eqref{eq:scaled-projection-problem} belongs to
		\(\overline B_{2M}(0)\).
		The same is true for every minimizer of the distance from \(Y_n\) to
		\(\mathcal T_{\bar\sigma}\), because
		\(0\in\mathcal T_{\bar\sigma}\).
		By Lemma~\ref{lem:local-tangent-fan}, for all sufficiently large \(n\),
		\(
		\mathfrak W_n\cap\overline B_{2M}(0)
		=
		\mathcal T_{\bar\sigma}\cap\overline B_{2M}(0).
		\)
		Consequently,
		\begin{equation}
			\label{eq:local-projection-identification}
			U_n\in\Pi_{\mathcal T_{\bar\sigma}}(Y_n)
		\end{equation}
		whenever \(\|Y_n\|_2\le M\) and \(n\) is sufficiently large.
		
		Let \((n_k)\) be an arbitrary subsequence. By tightness, it has a further
		subsequence, not relabelled, such that
		\(
		(Y_{n_k},U_{n_k})\Rightarrow(Y,U)
		\)
		for some random vector \(U\). By the Skorokhod representation theorem, we may realize this
		convergence almost surely on a common probability space. Now \(Y_{n_k}\to Y\) almost surely implies that, for almost every
		outcome, the sequence \((Y_{n_k})\) is bounded, while \eqref{eq:scaled-projection-problem} holds almost surely for \(U_{n_k}\) after the Skorokhod representation because it is
		a property of the joint distribution. Therefore
		\eqref{eq:local-projection-identification} holds eventually. 
		Since \(\mathcal T_{\bar\sigma}\) is closed, \(U \in \mathcal T_{\bar\sigma}\). Moreover, passing to the limit in
		the inequalities
		\[
		\|U_{n_k}-Y_{n_k}\|_2
		\le \|w-Y_{n_k}\|_2,
		\qquad w\in\mathcal T_{\bar\sigma},
		\]
		gives
		\(U\in\Pi_{\mathcal T_{\bar\sigma}}(Y)\) almost surely.
		The graph of
		the metric-projection correspondence onto a closed set is closed, so
		\(
		U\in\Pi_{\mathcal T_{\bar\sigma}}(Y)
		\)
		almost surely. By Assumption~\assref{ass:density}, \(Y\) has a density with
		respect to Lebesgue measure. Since
		\(\mathcal T_{\bar\sigma}\) is closed,
		\citet[Proposition~4.2]{Geyer1994} implies that its nearest
		point to \(Y\) is unique almost surely. Thus
		\(
		U=\pi_{\mathcal T_{\bar\sigma}}(Y)\) almost surely.
		Every subsequence therefore has a further subsequence with
		this same weak limit, proving
		\eqref{eq:whole-space-singular-clt}.

		It remains to prove \eqref{eq:asymptotic-binary-refinement}.
		Set \(\cl G_{\bar\sigma}=\{\gamma \in \cl H_d : \bar\sigma \preceq \gamma ,\gamma \notin \cl B_d\}\) and
		\(
		\mathcal S_{\bar\sigma}
		:=
		\bigcup_{\gamma \in \cl G_{\bar\sigma}}
		\mathcal T_{\gamma\mid\bar\sigma}.
		\)
		This is the union of the non-maximal cones of
		\(\mathcal T_{\bar\sigma}\), and is therefore a closed subset of the
		finite fan \(\mathcal T_{\bar\sigma}\).
		We first show that
		\begin{equation}
			\label{eq:limit-avoids-skeleton}
			\mathbb P\left(
			\pi_{\mathcal T_{\bar\sigma}}(Y)
			\in\mathcal S_{\bar\sigma}
			\right)
			=0.
		\end{equation}
		Suppose that \(y\in\R^N\) has a unique nearest point
		\(
		u:=\pi_{\mathcal T_{\bar\sigma}}(y)\in\mathcal S_{\bar\sigma}.
		\)
		There is a unique
		\(\gamma\in\mathcal G_{\bar\sigma}\) such that
		\(
		u\in
		\operatorname{relint}
		\bigl(\mathcal T_{\gamma\mid\bar\sigma}\bigr).
		\)
		Since
		\(
		\Span(\mathcal T_{\gamma\mid\bar\sigma})
		=
		V_\gamma,
		\)
		for every \(w\in V_\gamma\), one has
		\(
		u\pm tw\in\mathcal T_{\gamma\mid\bar\sigma}
		\)
		for all sufficiently small \(t>0\). Write \(r=y-u.\) Since \(u\) minimizes
		\(z\mapsto\|y-z\|_2^2\) over \(\mathcal T_{\bar\sigma}\), it follows that
		\(
		0
		=
		\frac{d}{dt}|_{t=0}
		\frac12\|y-(u+tw)\|_2^2
		=
		-\langle r,w\rangle .
		\)
		Hence
		\begin{equation}
			\label{eq:residual-orthogonal-face}
			r\perp V_\gamma.
		\end{equation}
		
		Let \(v\) be a multifurcating vertex of \(\gamma\), with child
		leaf-sets
		\(
		B_1,\ldots,B_m,
		\ m\ge3.
		\)
		Fix \(p<q\), and let \(\tau_{pq}\) be the hierarchy obtained from
		\(\gamma\) by adding the single cluster \(B_p\cup B_q\).
		Then \(\gamma\prec\tau_{pq}\), and
		\(
		\mathcal T_{\tau_{pq}\mid\bar\sigma}
		=
		\mathcal T_{\gamma\mid\bar\sigma}
		+
		\operatorname{cone}\{-b_{B_p\cup B_q}\}.
		\)
		Hence, since
		\(u\in\mathcal T_{\gamma\mid\bar\sigma}\), one has
		\(
		u-tb_{B_p\cup B_q}
		\in
		\mathcal T_{\tau_{pq}\mid\bar\sigma}
		\subset
		\mathcal T_{\bar\sigma},\) for \(t\ge0.
		\)
		Since \(u\) minimizes \(z\mapsto\|y-z\|_2^2\) over
		\(\mathcal T_{\bar\sigma}\), for every \(t>0\),
		\[
		\begin{aligned}
			0
			&\le
			\frac12
			\bigl\|y-(u-tb_{B_p\cup B_q})\bigr\|_2^2
			-
			\frac12\|y-u\|_2^2=
			-t\langle r,-b_{B_p\cup B_q}\rangle
			+
			\frac{t^2}{2}\|b_{B_p\cup B_q}\|_2^2 .
		\end{aligned}
		\]
		Dividing by \(t\) and letting \(t\downarrow0\) gives
		\begin{equation}
			\label{eq:one-sided-refinement-optimality}
			\langle r,-b_{B_p\cup B_q}\rangle\le0.
		\end{equation}
		
		On the other hand,
		\begin{equation}
			\label{eq:multifurcation-generator-identity}
			\sum_{p<q} b_{B_p\cup B_q}
			=
			b_{A_v}
			+
			(m-2)\sum_{p=1}^m b_{B_p},
		\end{equation}
		where
		\(A_v=B_1\sqcup\cdots\sqcup B_m\), and \(b_{B_p}\) is interpreted as
		zero when \(B_p\) is a singleton. To verify
		\eqref{eq:multifurcation-generator-identity}, consider a coordinate
		indexed by two leaves. If the leaves lie in distinct child sets
		\(B_p\) and \(B_q\), they occur together in exactly one union
		\(B_a\cup B_b\), and both sides equal \(1\). If they lie in the same
		child set \(B_p\), they occur together in exactly \(m-1\) such unions,
		while the right-hand side equals \(1+(m-2)=m-1\). All remaining
		coordinates vanish on both sides.
		
		Moreover, \(A_v\) and every non-singleton \(B_p\) are internal clusters
		of \(\gamma\), so the right-hand side of
		\eqref{eq:multifurcation-generator-identity} belongs to \(V_\gamma\).
		Therefore, by \eqref{eq:residual-orthogonal-face},
		\[
		0
		=
		\left\langle
		r,\sum_{p<q}-b_{B_p\cup B_q}
		\right\rangle
		=
		\sum_{p<q}
		\langle r,-b_{B_p\cup B_q}\rangle.
		\]
		Every summand is non-positive by
		\eqref{eq:one-sided-refinement-optimality}, and hence
		\begin{equation}
			\label{eq:refinement-residual-orthogonality}
			\langle r,-b_{B_p\cup B_q}\rangle=0,
			\qquad p<q.
		\end{equation}
		
		Now fix one pair \(p<q\) and write \(\tau:=\tau_{pq}\). Since
		\(\tau\) is a genuine one-edge refinement of \(\gamma\), we have
		\(
		V_\tau
		=
		V_\gamma+\Span\{-b_{B_p\cup B_q}\}\) and \(
		\dim V_\tau=\dim V_\gamma+1.
		\)
		Equations~\eqref{eq:residual-orthogonal-face} and
		\eqref{eq:refinement-residual-orthogonality} therefore give
		\(
		r\perp V_\tau.
		\)
		Since
		\(u\in\mathcal T_{\gamma\mid\bar\sigma}\subset V_\gamma\),
		we conclude that
		\(
		y=u+r
		\in
		V_\gamma+V_\tau^\perp.
		\)
		Because \(V_\gamma\subset V_\tau\) and
		\(\dim V_\tau=\dim V_\gamma+1\),
		\(
		\dim\bigl(V_\gamma+V_\tau^\perp\bigr)
		=
		\dim V_\gamma+N-\dim V_\tau
		=
		N-1,
		\)
		and, thus, \(V_\gamma+V_\tau^\perp\) is a proper linear subspace of
		\(\R^N\).
		
		For each \(\gamma\in\mathcal G_{\bar\sigma}\), choose one such
		one-edge refinement \(\tau_\gamma\). We have shown that
		\[
		\left\{
		y\in\R^N:
		\pi_{\mathcal T_{\bar\sigma}}(y)
		\in
		\operatorname{relint}
		(\mathcal T_{\gamma\mid\bar\sigma})
		\right\}
		\subset
		V_\gamma+V_{\tau_\gamma}^\perp
		\]
		at every point where the nearest projection is unique. Since
		\(\mathcal G_{\bar\sigma}\) is finite, the set
		\(
		\{y:
		\pi_{\mathcal T_{\bar\sigma}}(y)
		\in\mathcal S_{\bar\sigma}\}
		\)
		is, up to the Lebesgue-null set where the nearest projection is not
		unique, contained in a finite union of proper linear subspaces of
		\(\R^N\). It therefore has Lebesgue measure zero. Since \(Y\) has a
		Lebesgue density, \eqref{eq:limit-avoids-skeleton} follows.

		We finally transfer this property to the finite-sample hierarchy. Set
		\(
		U_n:=\sqrt n(\widehat H_n^\Pi-H_0).
		\)
		By \eqref{eq:whole-space-singular-clt},
		\(
		U_n
		\Rightarrow
		\pi_{\mathcal T_{\bar\sigma}}(Y),
		\)
		and hence \((U_n)\) is tight.
		Moreover, by Theorem~\ref{thm:projected-estimator-consistency} and
		\eqref{eq:local-refinement-union},
		\begin{equation}
			\label{eq:estimated-hierarchy-refines-truth}
			\mathbb P\left(
			\bar\sigma\preceq\sigma(\widehat H_n^\Pi)
			\right)
			\longrightarrow1.
		\end{equation}
		Fix \(R<\infty\). By \eqref{eq:single-refinement-exact-local-rescaling}, for all sufficiently large
		\(n\), on the event
		\(
		E_{n,R}:=\{
		\|U_n\|_2\le R,\ 
		\bar\sigma\preceq\sigma(\widehat H_n^\Pi),\
		\sigma(\widehat H_n^\Pi)\notin\mathcal B_d\},
		\)
		we have
		\(
		U_n
		\in
		\mathcal T_{\sigma(\widehat H_n^\Pi)\mid\bar\sigma}
		\subset
		\mathcal S_{\bar\sigma}.
		\)
		Since \(\mathcal S_{\bar\sigma}\) is closed,
		\eqref{eq:whole-space-singular-clt},
		\eqref{eq:limit-avoids-skeleton}, and the Portmanteau theorem give
		\[
		\limsup_{n\to\infty}\P(E_{n,R})\le\limsup_{n\to\infty}
		\mathbb P(U_n\in\mathcal S_{\bar\sigma})
		\le
		\mathbb P\left(
		\pi_{\mathcal T_{\bar\sigma}}(Y)
		\in\mathcal S_{\bar\sigma}
		\right)
		=0.
		\]
		Letting \(R\to\infty\) and using
		\eqref{eq:estimated-hierarchy-refines-truth} yields \eqref{eq:asymptotic-binary-refinement}.
		
		If \(\bar\sigma\) is binary, it has no strict refinement, so the preceding
		display implies
		\(
		\mathbb P(\sigma(\widehat H_n^\Pi)=\bar\sigma)\to1.
		\)
		If \(\bar\sigma\) is not binary, the event
		\(
		\{\bar\sigma\preceq\sigma(\widehat H_n^\Pi)\in\mathcal B_d\}
		\)
		implies
		\(
		\sigma(\widehat H_n^\Pi)\ne\bar\sigma.
		\)
		Hence
		\(
		\mathbb P(\sigma(\widehat H_n^\Pi)=\bar\sigma)\to0,
		\)
		and the proof is complete.
	\end{proof}

	The form of the limit is related to the constrained asymptotics for
	overparameterized hierarchical Archimedean copulas studied by
	\citet{PerreaultTangPanReid2025}, where equality of adjacent parameters
	places the true model on the boundary of a fixed hierarchical parameter
	space. Here the hierarchy itself is not fixed: locally around a
	multifurcating hierarchy, all admissible refinements are simultaneously
	feasible. The relevant local set is therefore the generally nonconvex
	tangent fan \(\mathcal T_{\bar\sigma}\), rather than a single constraint
	cone, and the limiting projection describes both continuous parameter
	fluctuations and the refinement selected at first order.
	
	Indeed, the cones of \(\mathcal T_{\bar\sigma}\) are indexed by the
	refinements of \(\bar\sigma\) and meet along common faces. If \(
	\pi_{\mathcal T_{\bar\sigma}}(D_T^{-1}Z)
	\in
	\operatorname{relint}
	\bigl(\mathcal T_{\tau\mid\bar\sigma}\bigr),\)	
	we call \(\tau\) the \emph{first-order hierarchy} selected by the
	projected estimator. The additional clusters in
	\(\mathscr A(\tau)\setminus\mathscr A(\bar\sigma)\) then correspond to
	increments of \(\widehat H_n^\Pi\) of order \(n^{-1/2}\).
	By the second assertion of
	Theorem~\ref{thm:whole-space-singular-clt}, the limiting projection lies almost surely
	in the relative interior of a maximal cone of
	\(\mathcal T_{\bar\sigma}\). Hence the first-order hierarchy is almost
	surely a binary refinement of \(\bar\sigma\).
	
	This also explains the non-Gaussian limit at a multifurcating hierarchy.
	Different realizations of \(D_T^{-1}Z\) may project onto different maximal
	cones of \(\mathcal T_{\bar\sigma}\), so the map
	\(y\mapsto\pi_{\mathcal T_{\bar\sigma}}(y)\) is only piecewise linear.
	The limit therefore records simultaneously the fluctuations of the
	genuine parameters and the random binary resolution of the
	multifurcations.
	
	Theorem~\ref{thm:whole-space-singular-clt} also identifies the appropriate
	threshold scale in
	Theorem~\ref{thm:thresholded-hierarchy-recovery}. Since \(
	d_{\mathcal C}(\widehat C_n^\Pi,C_0)
	=
	O_{\mathbb P}(n^{-1/2}),\)	
	every deterministic sequence \(
	\varepsilon_n\downarrow0, \
	\sqrt n\,\varepsilon_n\to\infty,
	\)	
	satisfies the required condition
	\(d_{\mathcal C}(\widehat C_n^\Pi,C_0)
	=o_{\mathbb P}(\varepsilon_n)\).
	Such a threshold removes the \(n^{-1/2}\)-scale refinement increments
	while eventually retaining every genuine positive increment. Thus the
	unthresholded estimator recovers a binary truth with probability tending
	to one, whereas at a multifurcating truth it selects a binary refinement
	with probability tending to one and exact unthresholded recovery has
	probability tending to zero; thresholding restores consistent recovery of
	the reduced hierarchy. This scale separation parallels penalized HAC
	structure estimation: \citet{OkhrinRistig2024} impose, among other
	stage-dependent conditions, \(\sqrt n\,\lambda_n\to\infty\) to fuse
	redundant hierarchical levels, while here the same separation arises
	directly from the tangent-fan geometry.
	
	At a binary hierarchy there are no proper refinements, and hence \(
	\mathcal T_{\bar\sigma}=V_{\bar\sigma}.
	\)	
	The limiting projection is therefore the linear orthogonal projection onto
	\(V_{\bar\sigma}\), so the Gaussian cophenetic perturbation remains
	Gaussian. Moreover, uniqueness of projection is automatic in this case;
	consequently, Assumption~\assref{ass:density} is not needed for the
	following corollary, while Assumption~\assref{ass:projection} remains in
	force.
	
	Let \(P_{\bar\sigma}\) denote the orthogonal projection of \(\mathbb R^N\)
	onto \(V_{\bar\sigma}\). Regard \(L_{\bar\sigma}\) as its
	\(N\times K(\bar\sigma)\) matrix in the coordinates \((r,\eta)\), and
	write \(
	L_{\bar\sigma}^\dagger
	:=
	\left(
	L_{\bar\sigma}^{\mathsf T}L_{\bar\sigma}
	\right)^{-1}
	L_{\bar\sigma}^{\mathsf T}.
	\)
	
	On the event
	\(\widehat H_n^\Pi\in\mathfrak U^{\bar\sigma}\), write \(
	\widehat H_n^\Pi
	=
	L_{\bar\sigma}(\widehat r_n,\widehat\eta_n),
	\ 
	H_0=L_{\bar\sigma}(r_0,\eta_0),\)	
	and define these coordinates arbitrarily outside this event.

	\begin{corollary}
		\label{cor:binary-hierarchy-clt}
		Suppose Assumption \assref{ass:projection} holds. If \(\bar\sigma\in\Bd\), then
		\begin{equation}
			\label{eq:binary-hierarchy-clt}
			\sqrt n\left(
			\widehat H_n^\Pi-H_0
			\right)
			\Rightarrow
			P_{\bar\sigma}D_T^{-1}Z.
		\end{equation}
		In particular, the limit is centered Gaussian, with covariance matrix
		\(
		P_{\bar\sigma}
		D_T^{-1}\Sigma_TD_T^{-1}
		P_{\bar\sigma}.
		\)
		Moreover, 
		\begin{equation}
			\label{eq:binary-coordinate-clt}
			\sqrt n
			\begin{pmatrix}
				\widehat r_n-r_0\\
				\widehat\eta_n-\eta_0
			\end{pmatrix}
			\Rightarrow
			L_{\bar\sigma}^\dagger D_T^{-1}Z.
		\end{equation}
	\end{corollary}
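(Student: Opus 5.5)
The plan is to rerun the argument behind Theorem~\ref{thm:whole-space-singular-clt}, using the fact that at a binary truth the tangent fan is a linear space. This removes any need for Assumption~\assref{ass:density}.

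\emph{Step 1: the tangent fan.} If \(\bar\sigma\in\Bd\), then \(\bar\sigma\) has no strict refinement. Hence, in \eqref{eq:tangent-fan-formula}, the only cone is \(\mathcal T_{\bar\sigma\mid\bar\sigma}\). Since \(\mathscr A(\bar\sigma)\setminus\mathscr A(\bar\sigma)=\varnothing\), definition \eqref{eq:relative-refinement-cone} gives \(\mathcal T_{\bar\sigma}=V_{\bar\sigma}\). The nearest point in a linear subspace is always unique and given by the orthogonal projection, so \(\Pi_{\mathcal T_{\bar\sigma}}(y)=\{P_{\bar\sigma}y\}\) for every \(y\in\R^N\).

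\emph{Step 2: the cophenetic limit.} I would repeat the proof of \eqref{eq:whole-space-singular-clt} verbatim. The delta method gives \(Y_n\Rightarrow D_T^{-1}Z\), and the bound \eqref{eq:Y-dominates-U} gives tightness. Lemma~\ref{lem:local-tangent-fan} yields the local identification \eqref{eq:local-projection-identification} with \(\mathcal T_{\bar\sigma}=V_{\bar\sigma}\). The subsequence/Skorokhod argument then shows \(U\in\Pi_{V_{\bar\sigma}}(Y)\). Density was used there only to make the projection unique, and Step~1 already provides uniqueness, so \(U=P_{\bar\sigma}D_T^{-1}Z\). A shortcut is also available. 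For large \(n\), on \(\{\|Y_n\|\le M\}\) we have \(U_n=P_{\bar\sigma}Y_n\), so \(U_n-P_{\bar\sigma}Y_n=o_{\mathbb P}(1)\). The continuous mapping theorem then gives \eqref{eq:binary-hierarchy-clt}. The limit is a linear image of a centered Gaussian, so it is centered Gaussian with covariance \(P_{\bar\sigma}D_T^{-1}\Sigma_TD_T^{-1}P_{\bar\sigma}\), using that \(P_{\bar\sigma}\) is symmetric and \(D_T\) is diagonal.

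\emph{Step 3: coordinates.} By Theorem~\ref{thm:thresholded-hierarchy-recovery}(ii) together with Theorem~\ref{thm:projected-estimator-consistency}, \(\mathbb P(\widehat H_n^\Pi\in\mathfrak U^{\bar\sigma})\to1\), so the arbitrary definition off this event is asymptotically irrelevant. On the event, linearity gives \(\sqrt n(\widehat r_n-r_0,\widehat\eta_n-\eta_0)=L_{\bar\sigma}^{-1}U_n\), where \(L_{\bar\sigma}^{-1}:V_{\bar\sigma}\to\R^{K}\) is the inverse from Proposition~\ref{prop:linear-hierarchy-cell}. Since \(L_{\bar\sigma}\) is injective, \(L_{\bar\sigma}^{\mathsf T}L_{\bar\sigma}\) is invertible, and \(L_{\bar\sigma}^\dagger\) coincides with \(L_{\bar\sigma}^{-1}\) on \(V_{\bar\sigma}\). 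Hence the scaled coordinates equal \(L_{\bar\sigma}^\dagger U_n\) with probability tending to one, and this converges to \(L_{\bar\sigma}^\dagger P_{\bar\sigma}D_T^{-1}Z\). Finally, \(P_{\bar\sigma}=L_{\bar\sigma}L_{\bar\sigma}^\dagger\), so \(L_{\bar\sigma}^\dagger P_{\bar\sigma}=L_{\bar\sigma}^\dagger\), which gives \eqref{eq:binary-coordinate-clt}.

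The only delicate step is Step 2: one must check that the Skorokhod argument in Theorem~\ref{thm:whole-space-singular-clt} did not depend on Assumption~\assref{ass:density} anywhere except for uniqueness of the projection. The direct identity \(U_n=P_{\bar\sigma}Y_n\) with probability tending to one avoids that check altogether.
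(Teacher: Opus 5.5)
Your proposal is correct and follows the paper's proof. You identify \(\mathcal T_{\bar\sigma}=V_{\bar\sigma}\), so the projection is the linear map \(P_{\bar\sigma}\) and Assumption~\assref{ass:density} is no longer needed; you then rerun the argument of Theorem~\ref{thm:whole-space-singular-clt}, and obtain the coordinates from \(\mathbb P(\widehat H_n^\Pi\in\mathfrak U^{\bar\sigma})\to1\) together with \(L_{\bar\sigma}^\dagger P_{\bar\sigma}=L_{\bar\sigma}^\dagger\). Your shortcut, namely \(U_n=P_{\bar\sigma}Y_n\) on \(\{\|Y_n\|_2\le M\}\) for all large \(n\) followed by Slutsky, is valid and makes the paper's ``rerun the proof'' step explicit without the Skorokhod argument.
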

	
	\begin{proof}
		A binary hierarchy has no proper refinement. Hence
		\(
		\mathcal T_{\bar\sigma}
		=
		\mathcal T_{\bar\sigma\mid\bar\sigma}
		=
		V_{\bar\sigma},
		\)
		and metric projection onto the tangent fan is the linear orthogonal
		projection \(P_{\bar\sigma}\). The proof of Theorem~\ref{thm:whole-space-singular-clt}, with this linear
		projection in place of the general tangent-fan projection, therefore gives
		\eqref{eq:binary-hierarchy-clt} without any uniqueness assumption.
		
		By Theorem~\ref{thm:thresholded-hierarchy-recovery},
		\(
		\mathbb P(
		\widehat H_n^\Pi\in\mathfrak U^{\bar\sigma}
		)\to1
		\).
		On this event,
		\[
		\begin{pmatrix}
			\widehat r_n-r_0\\
			\widehat\eta_n-\eta_0
		\end{pmatrix}
		=
		L_{\bar\sigma}^\dagger
		(\widehat H_n^\Pi-H_0).
		\]
		Since \(L_{\bar\sigma}^\dagger P_{\bar\sigma}=L_{\bar\sigma}^\dagger\),
		\eqref{eq:binary-coordinate-clt} follows.
	\end{proof}

	\section{Verification for the empirical Kendall matrix}
	\label{sec:kendall}
	
	We conclude by verifying the statistical assumptions of
	Sections~\ref{sec:statistical-recovery} and~\ref{sec:singular-clt}
	for the empirical matrix of pairwise Kendall coefficients. For this purpose we use frailties.
	
	Throughout this section, \(\mathcal K\) is one of the five
	families listed in Proposition~\ref{lem:admissible-families} and \(\psi_\theta\) is its corresponding generator.
	For every internal edge \(v\to w\), the sufficient nesting condition
	implies that
	\(
	\phi_{vw}
	:=
	\psi_{\theta_v}^{-1}\circ\psi_{\theta_w}
	\)
	is a Bernstein function. Hence, for every \(x>0\),
	\(t\mapsto\exp\{-x\phi_{vw}(t)\}\) is the Laplace transform of a
	probability distribution on \([0,\infty)\). Starting from a root
	frailty \(V_\rho\) with
	\(
	\mathbb E[\exp(-tV_\rho)]
	=
	\psi_{\theta_\rho}(t),
	\)
	we may therefore construct, recursively down the finite hierarchy,
	conditionally independent child frailties satisfying
	\[
	\mathbb E[e^{-tV_w}\mid V_v=x]
	=
	\exp\{-x\phi_{vw}(t)\}.
	\]
	We first note that all frailties are strictly positive almost surely.
	Since
	\(\psi_{\theta_\rho}(t)\to0\) as \(t\to\infty\), we have 
	\[
	\mathbb P(V_\rho=0)
	=
	\lim_{t\to\infty}\mathbb E[e^{-tV_\rho}]
	=
	0.
	\]
	Now let \(w\) be an internal child of \(v\).  Since
	\(
	\phi_{vw}(t)
	=
	\psi_{\theta_v}^{-1}
	(\psi_{\theta_w}(t))
	\to\infty\) as \( t\to\infty,
	\)
	we have, for every \(x>0\),
	\[
	\begin{aligned}
		\mathbb P(V_w=0\mid V_v=x)
		&=
		\lim_{t\to\infty}
		\mathbb E[e^{-tV_w}\mid V_v=x]=
		\lim_{t\to\infty}
		\exp\{-x\phi_{vw}(t)\}
		=
		0.
	\end{aligned}
	\]
	Induction from the root over the finite hierarchy therefore gives
	\(
	\mathbb P(V_v>0)=1\) for all \( v\in\operatorname{Int}(\sigma).
	\)
	
	This allows us to define, for each leaf \(i\), independently
	conditional on the frailties,
	\begin{equation}
		\label{eq:leaf-frailty}
	U_i
	=
	\psi_{\theta_{\parv (i)}}\!
	\left(\frac{E_i}{V_{\parv (i)}}\right),
	\qquad
	E_i\stackrel{\mathrm{iid}}{\sim}\operatorname{Exp}(1),
	\end{equation}
	This
	is the standard hierarchical frailty representation of the nested
	Archimedean copula; see \cite{McNeil2008}.
	
%
	\begin{lemma}
		\label{lem:hac-positive-density}
		Let \(\mathcal K\) be one of the five families in
		Proposition~\ref{lem:admissible-families}. Then every
		\(C\in\mathcal C_d^{\mathcal K}\) admits a density \(c\) such that
		\[
		c\in C\bigl((0,1)^d\bigr),
		\qquad
		c(u)>0,\quad u\in(0,1)^d.
		\]
	\end{lemma}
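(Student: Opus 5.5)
We plan to use the hierarchical frailty representation \eqref{eq:leaf-frailty}. By Theorem~\ref{thm:identifiability}, write \(C=R_{\mathcal K}(H)\) with reduced hierarchy \(\sigma\) and parameters \(\theta_v\) satisfying the strict nesting order. Conditional on the full frailty vector \(\mathbf V=(V_v)_{v\in\Int(\sigma)}\), the \(U_i\) are independent. Each has conditional distribution function \(\mathbb P(U_i\le u_i\mid\mathbf V)=\exp\{-V_{\parv(i)}\psi_{\theta_{\parv(i)}}^{-1}(u_i)\}\), obtained by inverting \(\psi\) in \eqref{eq:leaf-frailty}.

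For each of the five generators, \(\psi_\theta^{-1}\) is smooth on \((0,1)\) with \(-(\psi_\theta^{-1})'>0\). Hence the conditional density is
\[
f(u\mid\mathbf V)=\prod_{i=1}^d V_{\parv(i)}\,\bigl(-(\psi^{-1}_{\theta_{\parv(i)}})'(u_i)\bigr)\exp\{-V_{\parv(i)}\psi_{\theta_{\parv(i)}}^{-1}(u_i)\}.
\]
Define \(c(u):=\mathbb E[f(u\mid\mathbf V)]\). By Tonelli this is a density of \(C\).

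Positivity on \((0,1)^d\) is then immediate. We have \(\mathbb P(V_v>0)=1\) for all \(v\), as shown above. Every factor is strictly positive whenever all \(V_v>0\) and \(u\in(0,1)^d\), since \(\psi^{-1}(u_i)\in(0,\infty)\) and the derivative is nonzero. So the integrand is a.s. positive and its expectation is positive.

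Continuity is the main obstacle, because the conditional density contains factors \(V^{k}e^{-Vx}\) with \(V\) unbounded and possibly without moments. The Gumbel and Joe frailties, for instance, are stable-like or Sibuya-type and heavy tailed. We would handle this by local domination.

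Fix a compact box \(K=[\delta,1-\delta]^d\). On \(K\), the quantities \(x_i=\psi^{-1}_{\theta_{\parv(i)}}(u_i)\) lie in a compact interval \([a,b]\subset(0,\infty)\), and the derivative factors are bounded. Group the leaves by parent: a vertex \(v\) with \(k_v\) leaf children contributes \(V_v^{k_v}e^{-V_v\sum_i x_i}\le V_v^{k_v}e^{-k_vaV_v}\). The function \(s\mapsto s^ke^{-kas}\) is bounded on \([0,\infty)\) by \((1/(ea))^k\). Hence on \(K\) the integrand is dominated by a deterministic constant. Dominated convergence then gives continuity of \(u\mapsto\mathbb E[f(u\mid\mathbf V)]\) on \(K\), and so on \((0,1)^d\), since \(\delta\) is arbitrary.

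This avoids needing any moment or density information on the frailties, only their almost-sure positivity. The remaining routine checks are the following.
\begin{enumerate}[label=(\alph*)]
\item Smoothness and strict monotonicity of each \(\psi_\theta^{-1}\) on \((0,1)\) for the five families in Table~\ref{tab:kendall-profiles}. Each is given by an explicit elementary formula.
\item The identification \(\int_{[0,u]}c=C(u)\). This follows by taking expectations of the conditional product form \(\prod_i\exp\{-V_{\parv(i)}\psi^{-1}(u_i)\}\) and using the recursive Laplace transform structure to recover \eqref{eq:recursive-construction}, as in \cite{McNeil2008}.
\end{enumerate}
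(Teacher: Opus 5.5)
Your proposal is correct and takes essentially the same route as the paper. It uses the frailty representation to get the conditional product density, Tonelli together with $\mathbb P(V_v>0)=1$ to get $c>0$, and dominated convergence for continuity, after grouping leaves by their parent and bounding $V_v^{m_v}e^{-a_vV_v}$ by $\sup_{s\ge0}s^{m_v}e^{-a_vs}$ on compact boxes $[\delta,1-\delta]^d$.
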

	
	\begin{proof}
		For a leaf \(i\), formula \eqref{eq:leaf-frailty} gives \(P(U_i\le u \mid V_{\parv(i)}=v)=\exp(-v\psi^{-1}_\theta(u))\). Differentiating, we get the density 
		\[
		f_i(u\mid v)
		=
		v\,
		\bigl[-(\psi_{\theta_{\parv(i)}}^{-1})'(u)\bigr]
		\exp\!\left\{
		-v\psi_{\theta_{\parv(i)}}^{-1}(u)
		\right\},
		\qquad u\in(0,1).
		\]
		For each admissible family \(K\), we have \(\psi_\theta'(t)<0\) for \(t>0\). Hence
		\(
		-(\psi_\theta^{-1})'(u)>0,
		\)
		and \(f_i(u|v)>0\). Since the leaves are independent conditionally on the frailties, an application of Tonelli's theorem gives
		\[
		c(u)
		=
		\mathbb E\!\left[
		\prod_{i=1}^d f_i(u_i\mid V_{\parv(i)})
		\right]
		>0,
		\qquad u\in(0,1)^d.
		\]
		
		Fix a compact set \(K\Subset(0,1)^d\), and choose
		\(0<\delta<1/2\) such that
		\(K\subset[\delta,1-\delta]^d\).
		For every leaf \(i\), continuity of the inverse generator and
		its derivative gives constants \(A_i<\infty\) and \(a_i>0\) such that
		\(
		-(\psi_{\theta_{\parv(i)}}^{-1})'(u)\le A_i\) and
		\(
		\psi_{\theta_{\parv(i)}}^{-1}(u)\ge a_i\) for \( u\in[\delta,1-\delta].
		\)
		For each internal vertex with
		\(\mathcal I_v:=\{i:\parv(i)=v\}\ne\varnothing\), set
		\(
		m_v:=|I_v|,\) \( M_v:=\prod_{i\in I_v}A_i\) and \(a_v:=\sum_{i\in I_v}a_i.
		\)
		Then, uniformly for \(u\in K\),
		\[
		\prod_{i\in I_v}f_i(u_i\mid V_v)
		\le M_vV_v^{m_v}e^{-a_vV_v}
		\le M_v\left(\frac{m_v}{a_ve}\right)^{m_v}.
		\]
		Multiplying over the finitely many vertices with
		\(\mathcal I_v\ne\varnothing\) gives a deterministic finite bound for
		the conditional joint density on \(K\).
		Dominated convergence therefore proves continuity of \(c\) on \(K\). As \(K\) was arbitrary, \(c\in C((0,1)^d)\).
	\end{proof}

	For \(a=1,\ldots,n\), write
	\(
	X_a=(X_{a1},\ldots,X_{ad}).
	\)
	For every \(1\le i<j\le d\), define the empirical Kendall coefficients
	by
	\[
	\widehat\tau_{ij,n}
	:=
	\binom{n}{2}^{-1}
	\sum_{1\le a<b\le n}
	\kappa_{ij}\left(X_a,X_b\right), \quad \kappa_{ij}(x,y)
	:=
	\operatorname{sign}\!\left(
	(x_i-y_i)(x_j-y_j)
	\right),
	\quad x ,y\in\R^d.
	\]
	Set
	\(
	\widehat{\mathsf T}_{\tau,n}
	:=
	(\widehat\tau_{ij,n})_{1\le i<j\le d}
	\in\R^N.
	\)
	Since the margins are continuous, ties occur with probability zero and
	\(\widehat{\mathsf T}_{\tau,n}\) depends only on the componentwise
	ranks.
	
	Define the vector-valued kernel
	\(
	\kappa(x,y)
	:=
	(
	\kappa_{ij}(x,y))_{1\le i<j\le d}.
	\)
	Its expectation is
	\(
	\mathbb E[\kappa(X_1,X_2)]
	=
	\mathsf T_\tau(C_0)
	:=
	(
	\tau(C_{0,ij})
	)_{1\le i<j\le d}.
	\)
	Finally define
	\[
	g(x)
	:=
	\mathbb E[\kappa(x,X_2)]
	-
	\mathsf T_\tau(C_0),
	\qquad
	\Sigma_\tau
	:=
	4\operatorname{Cov}(g(X_1)).
	\]

	\begin{proposition}
		\label{prop:empirical-kendall}
		Assume that \(X_1,\ldots,X_n \in \R^d\) are i.i.d. with continuous marginal distributions \((F_1, \dots, F_d)\)
		and copula \(C_0\in\mathcal C_d^{\mathcal K}\) for an admissible family \(\mathcal K\) of Proposition~\ref{lem:admissible-families}. 
		Then
		\[
		\sqrt n\left(
		\widehat{\mathsf T}_{\tau,n}
		-
		\mathsf T_\tau(C_0)
		\right)
		\Rightarrow
		Z_\tau,
		\qquad
		Z_\tau\sim\mathcal N_N(0,\Sigma_\tau).
		\]
		Moreover, \(\Sigma_\tau\) is positive definite and
		the empirical Kendall matrix satisfies
		Assumptions~\assref{ass:projection}
		and~\assref{ass:density}.
	\end{proposition}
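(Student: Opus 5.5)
The plan has three parts: the CLT from Hoeffding's U-statistic theory, positive definiteness of \(\Sigma_\tau\) via a linear-independence argument, and the checks of Assumptions~\assref{ass:projection} and~\assref{ass:density}.

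\emph{The CLT.} \(\widehat{\mathsf T}_{\tau,n}\) is a vector U-statistic of order two with the bounded, symmetric kernel \(\kappa\), and \(\mathbb E\kappa(X_1,X_2)=\mathsf T_\tau(C_0)\). Hoeffding's decomposition gives
\[
\sqrt n(\widehat{\mathsf T}_{\tau,n}-\mathsf T_\tau(C_0))=\frac{2}{\sqrt n}\sum_{a=1}^n g(X_a)+o_{\mathbb P}(1).
\]
The multivariate Lindeberg--L\'evy CLT then yields \(\mathcal N_N(0,4\operatorname{Cov}(g(X_1)))\). Since \(\widehat{\mathsf T}_{\tau,n}\) is rank-based, we may assume uniform margins, i.e.\ \(X_a\sim C_0\).

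\emph{The profile part of Assumption~\assref{ass:projection}.} By \eqref{eq:universal-kendall-profile}, \(q_\tau(h)=\tau_{\mathcal K}^\star/(1+e^h)\) is smooth on \(\mathbb R\) with \(q_\tau'(h)=-\tau_{\mathcal K}^\star e^h/(1+e^h)^2\neq0\). So \(D_T\) is invertible, and Assumption~\assref{ass:projection} holds once the CLT is established.

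\emph{Positive definiteness.} By Remark~\ref{rem:lebesgue-density}, Assumption~\assref{ass:density} is then equivalent to \(\Sigma_\tau\succ0\). For uniform margins a direct calculation gives
\[
g_{ij}(u)=4C_{0,ij}(u_i,u_j)-2u_i-2u_j+1-\tau(C_{0,ij}).
\]
Suppose \(\lambda^{\mathsf T}\Sigma_\tau\lambda=0\). Then \(\sum_{i<j}\lambda_{ij}g_{ij}(U)=0\) almost surely under \(C_0\). By Lemma~\ref{lem:hac-positive-density}, \(C_0\) has a continuous positive density on \((0,1)^d\), so this null set contains no open set. Continuity of the left side then upgrades the identity to hold everywhere on \((0,1)^d\):
\[
\sum_{i<j}\lambda_{ij}\bigl(4C_{0,ij}(u_i,u_j)-2u_i-2u_j\bigr)=\text{const}\qquad \text{for all } u\in(0,1)^d.
\]

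\emph{The main obstacle: forcing \(\lambda=0\).} I would first apply the mixed difference in \((u_i,u_j)\) and \((u_i',u_j')\), holding all other coordinates fixed. Every term involving a single variable, or a pair other than \(\{i,j\}\), cancels. What remains is
\[
4\lambda_{ij}\bigl(C_{0,ij}(u_i,u_j)-C_{0,ij}(u_i',u_j)-C_{0,ij}(u_i,u_j')+C_{0,ij}(u_i',u_j')\bigr)=0.
\]
The bracket is the \(C_{0,ij}\)-mass of a rectangle, which is strictly positive because the bivariate margin has a positive density (positive density of \(c\) integrated over the other coordinates). Hence \(\lambda_{ij}=0\) for every pair, and \(\Sigma_\tau\) is positive definite. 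The step needing most care is justifying the everywhere-identity from the almost-sure one; this is precisely where continuity and positivity of the density from Lemma~\ref{lem:hac-positive-density} are used.
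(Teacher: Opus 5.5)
Your proof is correct. The CLT part follows essentially the paper's route: the paper cites Rublik's joint CLT for vectors of Kendall coefficients, which is the Hoeffding projection you write out. The profile check and the reduction of Assumption~\assref{ass:density} to $\Sigma_\tau\succ0$ also coincide with the paper.

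The genuine difference is how you show that $\Sigma_\tau$ is positive definite. The paper simply invokes \cite[Theorem~3.1(ii)]{Rublik2016}. That theorem assumes a continuous, strictly positive density, which is why Lemma~\ref{lem:hac-positive-density} asserts continuity of $c$. You instead prove nondegeneracy directly, in two steps:
\begin{enumerate}
\item You upgrade $\lambda^{\mathsf T}g(U)=0$ almost surely to an identity on all of $(0,1)^d$.
\item You isolate each $\lambda_{ij}$ with a mixed second difference in $(u_i,u_j)$, which annihilates every term not depending on both coordinates.
\end{enumerate}
This argument is self-contained and slightly more general than the citation.

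The first step only uses two facts. One is continuity of $u\mapsto\lambda^{\mathsf T}g(u)$, which is automatic because copulas are continuous. The other is that $C_0$ charges every nonempty open subset of $(0,1)^d$. So, contrary to your closing remark, continuity of the density is not needed; positivity (full support) is enough. Similarly, the final step needs only one rectangle of positive $C_{0,ij}$-mass, not positivity on every rectangle.

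The paper's route is shorter on the page but depends on the hypotheses of an external theorem. Your route makes explicit exactly which property of the HAC class (full support) drives nondegeneracy of the Kendall covariance.
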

	
	\begin{proof}
		Set
		\(
		U_a=(F_1(X_{a1}),\ldots,F_d(X_{ad})).
		\)
		Since the margins are continuous, the pairwise sample Kendall
		coefficients computed from \(X_1,\ldots,X_n\) coincide almost
		surely with those computed from \(U_1,\ldots,U_n\).
		Moreover, \(U_a\) has copula \(C_0\), whose density is continuous
		and strictly positive on \((0,1)^d\) by Lemma \ref{lem:hac-positive-density}.
		
		The joint central limit theorem for an arbitrary finite vector of
		sample Kendall coefficients follows from
		\cite[Eq.~(3.8)]{Rublik2016}; see also
		\cite[Theorem~2.3]{ElMaacheLepage2003}.
		The corresponding asymptotic covariance matrix is
		\(4\operatorname{Cov}(g(X_1))=\Sigma_\tau\).
		By \cite[Theorem~3.1(ii)]{Rublik2016}, this matrix is positive
		definite.
		
		The profile condition in Assumption~\assref{ass:projection} follows from the
		\eqref{eq:universal-kendall-profile}, and
		Assumption~\assref{ass:density} follows from the positive definiteness of
		\(\Sigma_\tau\).
	\end{proof}

Consequently, for the empirical Kendall matrix all the conclusions of Theorems~\ref{thm:projected-estimator-consistency}, \ref{thm:thresholded-hierarchy-recovery} and \ref{thm:whole-space-singular-clt} apply, with Corollary~\ref{cor:binary-hierarchy-clt} providing the corresponding Gaussian specialization at binary hierarchies.

\section{Discussion}
\label{sec:discussion}

The main purpose of this paper has been to place hierarchy and continuous
dependence parameters in a single reduced model space.  After identifying
decorated hierarchies that represent the same copula, homogeneous
hierarchical Archimedean copulas acquire the polyhedral geometry of the
ultrametric fan: reduced hierarchies index its strata, contraction of an
internal edge corresponds to passage to a face, and all admissible
refinements of a multifurcating hierarchy meet at the same lower-dimensional
model.  In this representation, identifiability, hierarchy recovery and
singular asymptotics are different manifestations of the same local
geometry.

The pairwise-coordinate construction separates this geometry from the
particular dependence summary used to observe it.  By
Theorem~\ref{thm:informative-functionals}, an informative bivariate
functional acts componentwise on cophenetic coordinates through a scalar
profile.  Once this profile is inverted, Kendall's tau, Spearman's rho and
the informative tail-dependence coefficients considered above all recover
the same underlying reduced hierarchy space.  Pairwise dependence summaries
are therefore not only devices for constructing a hierarchy: under the
stated identifiability conditions, their complete matrices provide global
coordinates on the reduced model class.  This is also why the projection
estimator can be defined without first selecting a binary hierarchy.

The asymptotic results reveal a corresponding distinction between
estimation of the copula and recovery of its exact combinatorial structure.
At a binary truth, the parameter lies in the relative interior of a maximal
cone.  The tangent fan is then a linear space, the projected Gaussian
perturbation remains Gaussian, and the hierarchy is locally stable.  At a
multifurcating truth, several refinement cones meet.  The estimator remains
root-\(n\) consistent, but Theorem~\ref{thm:whole-space-singular-clt} shows
that its first-order fluctuation is obtained by projection onto the full
refinement fan.  The singularity is therefore expressed not through a
slower estimation rate, but through random first-order selection among the
refinement strata meeting at the true model.  The additional refinement
edges have size \(n^{-1/2}\), and
Theorem~\ref{thm:thresholded-hierarchy-recovery} shows that contracting
increments at any asymptotically larger vanishing scale separates these
sampling-induced resolutions from the genuine positive edges of the
reduced hierarchy.

The Euclidean cophenetic metric used here is deliberately structural rather
than information-theoretic.  A first complementary direction is therefore
to study the statistical geometry induced by the likelihood itself.  On
each regular hierarchy stratum, Fisher information defines a natural
Riemannian metric, but the relevant question for the reduced model is what
happens when an internal edge contracts and several parametrizations meet.
One would like to determine whether the stratumwise Fisher tensors admit
compatible limits on common contraction faces, whether likelihood
differentiability persists in hierarchy-opening directions, and how the
resulting information geometry compares with the cophenetic geometry.
There is no reason a priori for this behavior to be family-independent:
the same polyhedral contraction may correspond to very different local
likelihood separation in different Archimedean families.  Thus the
polyhedral fan describes which structural alternatives meet, whereas the
Fisher geometry should quantify how statistically distinguishable those
alternatives are.  Such an analysis would also clarify the relation between
the projection asymptotics developed here and likelihood-based constrained
inference at structural singularities.

A second complementary direction concerns increasing structural complexity.
The present analysis keeps \(d\) fixed, so the number of hierarchy strata
and refinement directions is finite and the local sampling scale is
\(n^{-1/2}\).  When \(d=d_n\) grows, however, two additional effects appear.
The number of competing structural alternatives increases, and the smallest
internal increment of a hierarchy may itself approach zero.  Exact
hierarchy recovery should then depend not only on the local estimation
error but also on the separation of the true parameter from the
non-binary skeleton and on the multiplicity of competing refinements.
For estimators built from pairwise summaries, simultaneous control of the
\(\binom{d}{2}\) coordinates naturally introduces a multiple-comparison
effect beyond the fixed-dimensional root-\(n\) scale.  More generally, one
is led to ask for the smallest internal edge that can be statistically
resolved as a function of sample size, dimension and the local information
carried by the chosen copula family.  Below the scale of exact recovery,
thresholded recovery suggests a weaker but still meaningful target: recover
the hierarchy only up to structural features that are statistically
distinguishable at the available resolution.

These two directions point to a broader interpretation of the present
results.  Structural recovery near a singular model is governed by three
different ingredients.  The local stratified geometry determines which
alternative structures meet and in which directions they can separate;
the statistical experiment determines the information available in those
directions; and the number of competing structures determines how local
distinguishability translates into global recovery.  In the fixed-dimensional
setting studied here, the first ingredient already explains the tangent-fan
limit and the need for thresholding.  Incorporating information geometry
and growing structural complexity would turn this geometric picture into
a theory of statistical resolution: one would ask at what scale its
structural features can actually be distinguished from data.

\appendix
\section{Concavity calculations}
\label{app:concavity}
Fix \(\theta_1<\theta_2\) in the parameter interval of one of the
five families and define 
\(
F_{12}^{\cl K}(t)
:=
\psi_{\theta_1}^{-1}\bigl(\psi_{\theta_2}(t)\bigr),
\ t\ge0,
\) for all admissible families \(\mathcal K\).
If \(F_{12}^{\cl K}\) is concave, then, since \(F_{12}^{\cl K}(0)=0\),
concavity gives
\(
F_{12}^{\cl K}(x+y)
\le
F_{12}^{\cl K}(x)+F_{12}^{\cl K}(y).
\)
Hence, writing \(u=\psi_{\theta_2}(x)\) and
\(v=\psi_{\theta_2}(y)\), and using that
\(\psi_{\theta_1}\) is decreasing,
\[
\begin{aligned}
	C^{(2)}_{\theta_1}(u,v)
	&=
	\psi_{\theta_1}
	\bigl(F_{12}^{\cl K}(x)+F_{12}^{\cl K}(y)\bigr)\le
	\psi_{\theta_1}\bigl(F_{12}^{\cl K}(x+y)\bigr)
	=
	\psi_{\theta_2}(x+y)
	=
	C^{(2)}_{\theta_2}(u,v).
\end{aligned}
\]
It therefore remains only to check concavity of \(F_{12}^{\cl K}\).

Let \(a:=\theta_1/\theta_2\in(0,1)\). For Clayton and Gumbel,
\(
F_{12}^{\operatorname{Cl}}(t)=(1+t)^a-1
\) and \(
F_{12}^{\operatorname{Gu}}(t)=t^a
\), which are concave. 	
For AMH, direct substitution gives
\[
F_{12}^{\operatorname{AMH}}(t)
=
\log
\frac{(1-\theta_1)e^t+\theta_1-\theta_2}
{1-\theta_2}.
\]
Since
\[
(F_{12}^{\operatorname{AMH}})''(t)
=
\frac{(1-\theta_1)e^t(\theta_1-\theta_2)}
{\bigl((1-\theta_1)e^t+\theta_1-\theta_2\bigr)^2}
<0,
\]
\(F_{12}^{\operatorname{AMH}}\) is concave.

For Joe and Frank, define
\(
G_{a,c}(t)
:=
-\log\!\left(
1-(1-ce^{-t})^a
\right),\) with \(
\
0<a\le1,\ 0<c\le1.
\)
Writing \(s=1-ce^{-t}\), one obtains
\(
G_{a,c}'(t)
=
a(1-s)s^{a-1}/(1-s^a)=:H(s),
\) and
\[
H'(s)
=
-\frac{
	a s^{a-2}\bigl(1-a+as-s^a\bigr)
}{
	(1-s^a)^2
}
\le0,
\]
because concavity of \(s\mapsto s^a\) gives
\(s^a\le 1-a+as\). Thus \(G_{a,c}\) is concave. Consequently, for Joe,
\(
F_{12}^{\operatorname{Jo}}(t)
=
-\log\!\left(
1-(1-e^{-t})^a
\right)
\)
is concave.
Finally, for Frank, put
\(
c:=1-e^{-\theta_2}.
\)
Using
\[
\psi_{\theta}^{-1}(u)
=
-\log
\frac{1-e^{-\theta u}}{1-e^{-\theta}},
\]
we obtain
\(
F_{12}^{\operatorname{Fr}}(t)
=
\log(1-e^{-\theta_1})
-
\log(
1-(1-ce^{-t})^a
),
\)
which is concave. Thus all five families are positively
ordered:
\[
\theta_1<\theta_2
\quad\Longrightarrow\quad
C^{(2)}_{\theta_1}(u,v)
\le
C^{(2)}_{\theta_2}(u,v),
\qquad (u,v)\in[0,1]^2.
\]
	\section*{Acknowledgements}
	This research did not receive any specific grant from funding agencies in the public, commercial, or not-for-profit sectors.

	\printcredits
	
	\bibliographystyle{cas-model2-names}
	
	\bibliography{biblio_copula}

@article{Ardila2005,
  author  = {Ardila, Federico},
  title   = {Subdominant matroid ultrametrics},
  journal = {Annals of Combinatorics},
  year    = {2005},
  volume  = {8},
  pages   = {379--389},
  doi     = {10.1007/s00026-004-0227-1}
}

@article{ArdilaKlivans2006,
  author  = {Ardila, Federico and Klivans, Caroline J.},
  title   = {The {Bergman} complex of a matroid and phylogenetic trees},
  journal = {Journal of Combinatorial Theory, Series B},
  year    = {2006},
  volume  = {96},
  pages   = {38--49},
  doi     = {10.1016/j.jctb.2005.06.004}
}

@article{BardenLeOwen2013,
  author  = {Barden, Dennis and Le, Huiling and Owen, Megan},
  title   = {Central limit theorems for {Fr\'echet} means in the space of phylogenetic trees},
  journal = {Electronic Journal of Probability},
  year    = {2013},
  volume  = {18},
  pages   = {1--25},
  doi     = {10.1214/EJP.v18-2201}
}

@article{CardonaMirRosselloRotgerSanchez2013,
  author  = {Cardona, Gabriel and Mir, Arnau and Rossell{\'o}, Francesc and Rotger, Lloren\c{c} and S{\'a}nchez, David},
  title   = {Cophenetic metrics for phylogenetic trees, after {Sokal} and {Rohlf}},
  journal = {BMC Bioinformatics},
  year    = {2013},
  volume  = {14},
  pages   = {3},
  doi     = {10.1186/1471-2105-14-3}
}

@article{Drton2009,
  author  = {Drton, Mathias},
  title   = {Likelihood ratio tests and singularities},
  journal = {The Annals of Statistics},
  year    = {2009},
  volume  = {37},
  pages   = {979--1012},
  doi     = {10.1214/07-AOS571}
}

@misc{FaheyHostenKriegerTimpe2008,
  author       = {Fahey, Charles and Ho{\c{s}}ten, Serkan and Krieger, Nathan and Timpe, Leslie},
  title        = {Least squares methods for equidistant tree reconstruction},
  year         = {2008},
  eprint       = {0808.3979},
  archivePrefix= {arXiv},
  note         = {Unpublished preprint}
}

@article{Geyer1994,
  author  = {Geyer, Charles J.},
  title   = {On the asymptotics of constrained $M$-estimation},
  journal = {The Annals of Statistics},
  year    = {1994},
  volume  = {22},
  number  = {4},
  pages   = {1993--2010},
  doi     = {10.1214/aos/1176325768}
}

@article{GoreckiHofertHolena2016,
  author  = {G{\'o}recki, Jan and Hofert, Marius and Hole{\v{n}}a, Martin},
  title   = {An approach to structure determination and estimation of hierarchical {Archimedean} copulas and its application to {Bayesian} classification},
  journal = {Journal of Intelligent Information Systems},
  year    = {2016},
  volume  = {46},
  pages   = {21--59},
  doi     = {10.1007/s10844-014-0350-3}
}

@article{GoreckiHofertHolena2017,
  author  = {G{\'o}recki, Jan and Hofert, Marius and Hole{\v{n}}a, Martin},
  title   = {On structure, family and parameter estimation of hierarchical {Archimedean} copulas},
  journal = {Journal of Statistical Computation and Simulation},
  year    = {2017},
  volume  = {87},
  number  = {17},
  pages   = {3261--3324},
  doi     = {10.1080/00949655.2017.1365148}
}

@article{HofertMaechler2011,
  author  = {Hofert, Marius and M{\"a}chler, Martin},
  title   = {Nested {Archimedean} copulas meet {R}: The nacopula package},
  journal = {Journal of Statistical Software},
  year    = {2011},
  volume  = {39},
  pages   = {1--20},
  doi     = {10.18637/jss.v039.i09}
}

@article{HofertMaechlerMcNeil2012,
  author  = {Hofert, Marius and M{\"a}chler, Martin and McNeil, Alexander J.},
  title   = {Likelihood inference for {Archimedean} copulas in high dimensions under known margins},
  journal = {Journal of Multivariate Analysis},
  year    = {2012},
  volume  = {110},
  pages   = {133--150},
  doi     = {10.1016/j.jmva.2012.02.019}
}

@article{HofertPham2013,
  author  = {Hofert, Marius and Pham, David},
  title   = {Densities of nested {Archimedean} copulas},
  journal = {Journal of Multivariate Analysis},
  year    = {2013},
  volume  = {118},
  pages   = {37--52},
  doi     = {10.1016/j.jmva.2013.03.006}
}

@article{HotzEtAl2013,
  author  = {Hotz, Thomas and Huckemann, Stephan and Le, Huiling and Marron, J. S. and Mattingly, Jonathan C. and Miller, Ezra and Nolen, James and Owen, Megan and Patrangenaru, Vic and Skwerer, Sean},
  title   = {Sticky central limit theorems on open books},
  journal = {The Annals of Applied Probability},
  year    = {2013},
  volume  = {23},
  pages   = {2238--2258},
  doi     = {10.1214/12-AAP899}
}

@book{Joe1997,
  author    = {Joe, Harry},
  title     = {Multivariate Models and Multivariate Dependence Concepts},
  series    = {Monographs on Statistics and Applied Probability},
  volume    = {73},
  publisher = {Chapman \& Hall},
  address   = {London},
  year      = {1997}
}

@article{MatsypuraNeoProkhorov2016,
  author  = {Matsypura, Dmytro and Neo, E. and Prokhorov, Artem},
  title   = {Estimation of hierarchical {Archimedean} copulas as a shortest path problem},
  journal = {Economics Letters},
  year    = {2016},
  volume  = {149},
  pages   = {131--134},
  doi     = {10.1016/j.econlet.2016.10.034}
}

@article{McNeil2008,
  author  = {McNeil, Alexander J.},
  title   = {Sampling nested {Archimedean} copulas},
  journal = {Journal of Statistical Computation and Simulation},
  year    = {2008},
  volume  = {78},
  number  = {6},
  pages   = {567--581},
  doi     = {10.1080/00949650701255834}
}

@article{McNeilNeslehova2009,
  author  = {McNeil, Alexander J. and Ne{\v{s}}lehov{\'a}, Johanna},
  title   = {Multivariate {Archimedean} copulas, $d$-monotone functions and $\ell_1$-norm symmetric distributions},
  journal = {The Annals of Statistics},
  year    = {2009},
  volume  = {37},
  number  = {5B},
  pages   = {3059--3097},
  doi     = {10.1214/07-AOS556}
}

@book{Nelsen2006,
  author    = {Nelsen, Roger B.},
  title     = {An Introduction to Copulas},
  edition   = {2},
  publisher = {Springer},
  address   = {New York},
  year      = {2006}
}

@article{OkhrinOkhrinSchmid2013,
  author  = {Okhrin, Ostap and Okhrin, Yarema and Schmid, Wolfgang},
  title   = {On the structure and estimation of hierarchical {Archimedean} copulas},
  journal = {Journal of Econometrics},
  year    = {2013},
  volume  = {173},
  number  = {2},
  pages   = {189--204},
  doi     = {10.1016/j.jeconom.2012.12.001}
}

@article{OkhrinOkhrinSchmidProperties2013,
  author  = {Okhrin, Ostap and Okhrin, Yarema and Schmid, Wolfgang},
  title   = {Properties of hierarchical {Archimedean} copulas},
  journal = {Statistics \& Risk Modeling},
  year    = {2013},
  volume  = {30},
  pages   = {21--54},
  doi     = {10.1524/strm.2013.1071}
}

@article{OkhrinRistig2024,
  author  = {Okhrin, Ostap and Ristig, Alexander},
  title   = {Penalized estimation of hierarchical {Archimedean} copula},
  journal = {Journal of Multivariate Analysis},
  year    = {2024},
  volume  = {201},
  pages   = {105274},
  doi     = {10.1016/j.jmva.2023.105274}
}

@article{PerreaultTangPanReid2025,
  author  = {Perreault, Samuel and Tang, Yanbo and Pan, Ruyi and Reid, Nancy},
  title   = {Inference for overparametrized hierarchical {Archimedean} copulas},
  journal = {Journal of Multivariate Analysis},
  year    = {2025},
  volume  = {210},
  pages   = {105483},
  doi     = {10.1016/j.jmva.2025.105483}
}

@article{SegersUyttendaele2014,
  author  = {Segers, Johan and Uyttendaele, Nathan},
  title   = {Nonparametric estimation of the tree structure of a nested {Archimedean} copula},
  journal = {Computational Statistics \& Data Analysis},
  year    = {2014},
  volume  = {72},
  pages   = {190--204},
  doi     = {10.1016/j.csda.2013.10.028}
}

@book{SempleSteel2003,
  author    = {Semple, Charles and Steel, Mike},
  title     = {Phylogenetics},
  series    = {Oxford Lecture Series in Mathematics and Its Applications},
  volume    = {24},
  publisher = {Oxford University Press},
  address   = {Oxford},
  year      = {2003}
}

@article{Shapiro2000,
  author  = {Shapiro, Alexander},
  title   = {On the asymptotics of constrained local $M$-estimators},
  journal = {The Annals of Statistics},
  year    = {2000},
  volume  = {28},
  pages   = {948--960},
  doi     = {10.1214/aos/1015952006}
}

@misc{Uyttendaele2014,
  author        = {Uyttendaele, Nathan},
  title         = {Nested {Archimedean} copulas: A new class of nonparametric tree structure estimators},
  year          = {2014},
  eprint        = {1407.0230},
  archivePrefix = {arXiv}
}

@article{Uyttendaele2018,
  author  = {Uyttendaele, Nathan},
  title   = {On the estimation of nested {Archimedean} copulas: A theoretical and an experimental comparison},
  journal = {Computational Statistics},
  year    = {2018},
  volume  = {33},
  pages   = {1047--1070},
  doi     = {10.1007/s00180-017-0743-1}
}

@article{Rublik2016,
	author  = {Rubl{\'i}k, Franti{\v{s}}ek},
	title   = {Estimates of the Covariance Matrix of Vectors of
	{U}-Statistics and Confidence Regions for Vectors of
	Kendall's Tau},
	journal = {Kybernetika},
	year    = {2016},
	volume  = {52},
	number  = {2},
	pages   = {280--293},
	doi     = {10.14736/kyb-2016-2-0280}
}

@incollection{ElMaacheLepage2003,
	author    = {El Maache, Hamani and Lepage, Yves},
	title     = {Spearman's Rho and Kendall's Tau for Multivariate Data Sets},
	booktitle = {Mathematical Statistics and Applications:
	Festschrift for Constance van Eeden},
	series    = {Lecture Notes--Monograph Series},
	volume    = {42},
	pages     = {113--130},
	year      = {2003},
	publisher = {Institute of Mathematical Statistics},
	address   = {Beachwood, OH},
	doi       = {10.2307/4356234}
}
	
	

\end{document}